\newcommand{\pO}{\partial \Omega}
\newcommand{\mN}{\mathcal{N}}
\newcommand{\Gammaj}{\Gamma^{(j)}}
\newcommand{\diej}{ \delta_j^{i,e \ \dagger}}
\newcommand{\dwj}{ \delta_j^{w \ \dagger}}
\newcommand{\RF}{R_{\mathcal{F}}}
\newcommand{\RE}{R_{\mathcal{E}}}
\newcommand{\uag}{u_{\mathcal{A}, \Gamma}}
\newcommand{\vag}{v_{\mathcal{A}, \Gamma}}
\newcommand{\wag}{w_{\mathcal{A}, \Gamma}}
\newcommand{\uG}{u_\Gamma}
\newcommand{\vG}{v_\Gamma}
\newcommand{\wG}{w_\Gamma}
\newcommand{\uFj}{u_{j, \mathcal{F}}}
\newcommand{\uFk}{u_{k, \mathcal{F}}}
\newcommand{\uEj}{u_{j, \mathcal{E}}}
\newcommand{\uEjuno}{u_{j_1, \mathcal{E} } }
\newcommand{\uEjdue}{u_{j_2, \mathcal{E}}}
\newcommand{\uEjtre}{u_{j_3, \mathcal{E}}}
\newcommand{\umeanE}{\bar{u}_\mathcal{E} }
\newcommand{\uiewmeanEj}{\bar{u}^{i,e,w}_{j,\mathcal{E} } }
\newcommand{\uiewmeanEk}{\bar{u}^{i,e,w}_{k,\mathcal{E} } }
\newcommand{\uiewmeanEjuno}{\bar{u}^{i,e,w}_{j_1,\mathcal{E} } }
\newcommand{\umeanF}{\bar{u}_\mathcal{F}}
\newcommand{\umeanieFj}{\bar{u}^{i,e}_{j,\mathcal{F}} }
\newcommand{\umeaniewFj}{\bar{u}^{i,e,w}_{j,\mathcal{F}} }
\newcommand{\umeaniewFk}{\bar{u}^{i,e,w}_{k,\mathcal{F}} }
\newcommand{\vnn}{v^{n+1}}
\newcommand{\uinn}{u_i^{n+1}}
\newcommand{\uenn}{u_e^{n+1}}
\newcommand{\vn}{v^n}
\newcommand{\uikk}{u_i^{k+1}}
\newcommand{\uekk}{u_e^{k+1}}
\newcommand{\uik}{u_i^k}
\newcommand{\uek}{u_e^k}
\newcommand{\uil}{u_{i,l}}
\newcommand{\uel}{u_{e, l}}
\newcommand{\wnn}{w^{n+1}}
\newcommand{\wn}{w^n}
\newcommand{\wkk}{w^{k+1}}
\newcommand{\wk}{w^k}
\newcommand{\si}{s_i}
\newcommand{\se}{s_e}
\newcommand{\sw}{s_w}
\newcommand{\sil}{s_{i,l}}
\newcommand{\sel}{s_{e,l}}
\newcommand{\swl}{s_{w,l}}
\newcommand{\sikk}{s_i^{k+1}}
\newcommand{\sekk}{s_e^{k+1}}
\newcommand{\swkk}{s_w^{k+1}}
\newcommand{\silkk}{s_{i,l}^{k+1}}
\newcommand{\selkk}{s_{e,l}^{k+1}}
\newcommand{\swlkk}{s_{w,l}^{k+1}}
\newcommand{\xx}{\mathbf{x}}
\newcommand{\aal}{\mathbf{a}_l}
\newcommand{\aat}{\mathbf{a}_t}
\newcommand{\aan}{\mathbf{a}_n}
\newcommand{\Pd}{P_D}
\newcommand{\Ed}{E_D}
\newcommand{\Sj}{S^{(j)}}
\newcommand{\Sk}{S^{(k)}}
\newcommand{\SjF}{S_{\mathcal{F}}^{(j)}}
\newcommand{\SkF}{S_{\mathcal{F}}^{(k)}}
\newcommand{\SjallE}{S_{\mathcal{E}}^{(j_{123})}}
\newcommand{\SjunoE}{S_{\mathcal{E}}^{(j_1)}}
\newcommand{\SjdueE}{S_{\mathcal{E}}^{(j_2)}}
\newcommand{\SjtreE}{S_{\mathcal{E}}^{(j_3)}}
\newcommand{\dvdt}{\frac{\partial v}{\partial t}}
\newcommand{\dwdt}{\frac{\partial w}{\partial t}}
\newcommand{\sumcoupdidv}{\sum_l \frac{\partial I_{\text{ion}}}{\partial v_l} (v,w)}
\newcommand{\sumcoupdidw}{\sum_l \frac{\partial I_{\text{ion}}}{\partial w_l} (v,w)}
\newcommand{\sumcoupdRdv}{\sum_l \frac{\partial R}{\partial v_l} (v,w)}
\newcommand{\sumcoupdRdw}{\sum_l \frac{\partial R}{\partial w_l} (v,w)}
\newcommand{\sumcoupdidvk}{\sum_l \frac{\partial I_{\text{ion}}}{\partial v_l} (v^k,w^k)}
\newcommand{\sumcoupdidwk}{\sum_l \frac{\partial I_{\text{ion}}}{\partial w_l} (v^k,w^k)}
\newcommand{\sumcoupdRdvk}{\sum_l \frac{\partial R}{\partial v_l} (v^k,w^k)}
\newcommand{\sumcoupdRdwk}{\sum_l \frac{\partial R}{\partial w_l} (v^k,w^k)}
\newcommand{\sumNx}[1]{\sum_{k \in \mathcal{N}_x} #1 }
\newcommand{\sumLNnod}[1]{\sum_{l = 1}^{N_h} #1}
\newcommand{\sumJN}[1]{\sum_{j = 1}^N #1}
\newcommand{\sumL}[1]{\sum_l #1}
\newcommand{\Iapp}{I_{\text{app}}}
\newcommand{\Ion}{I_{\text{ion}}}
\newcommand{\harmextie}[2]{\mathcal{H}^{i,e}_{#1} #2}
\newcommand{\innerprod}[2]{\left( #1, #2 \right)}
\newcommand{\bilformi}[2]{a_i \left( #1,  #2 \right)}
\newcommand{\bilforme}[2]{a_e \left( #1,  #2 \right)}
\newcommand{\norm}[2]{|| #1 ||_{L^2(#2)} }
\newcommand{\normB}[1]{| #1 |_{B} }
\newcommand{\normBtilde}[1]{| #1 |_{\widetilde{B}} }
\newcommand{\normBgamma}[1]{| #1 |_{B_\Gamma} }
\newcommand{\normBtildeGamma}[1]{| #1 |_{\widetilde{B}_\Gamma} }
\newcommand{\normBtildeGammaj}[1]{| #1 |_{\widetilde{B}_\Gamma^{(j)}} }
\newcommand{\normBj}[1]{| #1 |_{B^{(j)}} }
\newcommand{\seminormHone}[2]{| #1 |_{H^1(#2)} }
\newcommand{\seminormSj}[1]{| #1 |_{S_\Gamma^{(j)}} }
\newcommand{\seminormSjF}[1]{| #1 |_{S_\mathcal{F}^{(j)}} }
\newcommand{\seminormSkF}[1]{| #1 |_{S_\mathcal{F}^{(k)}} }
\newcommand{\seminormSjuno}[1]{| #1 |_{S_\Gamma^{(j_1)}} }
\newcommand{\normtaucoup}[1]{||| #1 |||_{\tau, coup} }
\newtheorem{theorem}{Theorem}[section]
\newtheorem{lemma}{Lemma}[section]
\newtheorem{remark}{Remark}[section]
\begin{document}
	
	\title{Newton-Krylov-BDDC deluxe solvers for non-symmetric fully implicit time discretizations of the Bidomain model}
	\author{
		Ngoc Mai Monica Huynh
		\thanks{Dipartimento di Matematica, Universit\`a degli Studi di Pavia, 
			Via Ferrata, 27100 Pavia, Italy. 
			E-mail: {\sf ngocmaimonica.huynh01@universitadipavia.it}
		}
	}

\maketitle

\begin{abstract}
	A novel theoretical convergence rate estimate for a Balancing Domain Decomposition by Constraints algorithm is proven for the solution of the cardiac Bidomain model, describing the propagation of the electric impulse in the cardiac tissue. The non-linear system arises from a fully implicit time discretization and a monolithic solution approach. 
	The preconditioned non-symmetric operator is constructed from the linearized system arising within the Newton-Krylov approach for the solution of the non-linear problem; we theoretically analyze and prove a convergence rate bound for the Generalised Minimal Residual iterations’ residual. The theory is confirmed by extensive parallel numerical tests, widening the class of robust and efficient solvers for implicit time discretizations of the Bidomain model.
\end{abstract}

\pagestyle{myheadings}
\thispagestyle{plain}

\section{Introduction}

In the last decade, the increasing need for understanding the intrinsic mechanisms behind cardiac diseases has resulted in the growth of inter-disciplinary studies, where, for example, physiological phenomena are translated into mathematical models \cite{carusi2012bridging, corrado2018work, niederer2019computational, quarteroni2017integrated, trayanova2011whole}. 
Modern medicine employs computational tools and models to simulate, predict and analyze risk situations with a non-invasive approach. An increasing number of studies, for example, have addressed the understanding of dysfunctions of the heart and the interaction between bio-electrical and mechanical phenomena \cite{colli2017effects, colli2019role, franzone2016joint, coronel2005right, mendonca2018modeling}. 

However, simulation of these models represents a tough challenge for personal laptops, as a huge amount of computational resources is required for numerical calculations. Even more so if one wants to use realistic heart geometries, where there is the need to represent accurately all the many facets.
For this reason, the employment of supercomputers and large scale system architectures in these studies has quickly spread (see for example works on cardiac mechanics \cite{colli2018numerical, klawonn2010highly, jiang2020highly}).

In this perspective, the present work seeks to design, theoretically analyze and validate numerically a Newton-Krylov solver for fully implicit time discretizations of the Bidomain model, preconditioned by a Balancing Domain Decomposition by Constraints (BDDC) algorithm.
This model consists of a degenerate parabolic system of two non-linear reaction diffusion Partial Differential Equations (PDEs), which describes the propagation of the electric signal in the cardiac tissue \cite{franzone2014mathematical, pennacchio2005multiscale, quarteroni2017integrated}. This system is coupled, by means of a non-linear reaction term, with a model of ionic current flows and associated gating variables, modeled as a system of Ordinary Differential Equations (ODEs).

The main contribution of this paper is a novel theoretical analysis for the convergence rate bound of the preconditioned non-symmetric operator coming from a coupled approach for the solution of the non-linear system arising from a fully implicit time discretization of the cardiac electrical model. 

Common alternatives in the literature use semi-implicit time discretizations \cite{colli2018numerical, zampini2014dual} and/or operator splitting \cite{chen2017splitting, chen2019two, sundnes2005operator}, as fully implicit schemes are more expensive from a computational point of view if complex and high-dimensional non-linear ionic models (e.g. \cite{di1985model, luo1991model, ten2004model}) are coupled with the Bidomain system. 
Other choices consist in decoupling strategies, where the ionic model is solved prior to the Bidomain, for example in the previous work of the Author \cite{huynh2021parallel} or in \cite{ dickopf2014design, munteanu2009decoupled, munteanu2009scalable, scacchi2011multilevel}.
In Ref. \cite{murillo2004fully}, an attempt at developing a solver for fully implicit time discretizations of the Bidomain model was done, in the framework of additive Schwarz preconditioners. 

In this work we extend this solution strategy to the class of dual-primal Domain Decomposition (DD) algorithms, with particular focus on BDDC preconditioners.

BDDC preconditioners were introduced by \cite{dohrmann2003preconditioner} as an alternative to FETI-DP (Dual-Primal Finite Elements Tearing and Interconnecting \cite{farhat2001feti}) methods for scalar elliptic problems and then analyzed by \cite{mandel2003convergence, mandel2005algebraic}. Within this field of applications, BDDC algorithms have been employed for the solution of the linearized semi-implicit Bidomain system in \cite{zampini2014dual, zampini2014inexact} and for cardiac mechanics in Refs. \cite{colli2018numerical, pavarino2015newton}. 

Instead of using a non-linear BDDC algorithm (such as the non-linear FETI-DP and BDDC proposed in \cite{klawonn2017nonlinear}) or, more generally, non-linear preconditioning strategies (e.g. \cite{liu2018note}), we present hereby Newton-Krylov-BDDC approach for the solution of a coupled solution strategy for fully implicit time discretizations of the Bidomain system, including the ionic model. At each time step we solve and update a non-linear problem, where the Jacobian system arising from the linearization of the non-linear problem is non-symmetric, thus forcing us to use a Generalized Minimal Residual (GMRES) \cite{saad1986gmres} method for its solution, preconditioned by BDDC algorithms in order to accelerate the convergence.

We propose a theoretical estimate of the convergence rate, based on the work in \cite{eisenstat1983variational} (which provided a theoretical bound for the residual of the GMRES iterations) and in \cite{tu2008balancing}, whose BDDC preconditioner addressed the solution of non-symmetric systems arising from the discretization of advection-diffusion PDEs. This analysis is enriched by the employment of the recently-introduced \emph{deluxe} scaling \cite{dohrmann2016bddc}.
The robustness and efficiency of the proposed solver is then confirmed by extensive parallel numerical tests on the Bidomain model, using the Portable, Extensible Toolkit for Scientific Computation (PETSc) library \cite{balay2019petsc}, thereby encouraging further investigations with realistic heart geometries and the tailoring of these kinds of solvers for the solution of the electro-mechanical model.

The work is structured as follows. In Sec. \ref{cardiacmodel}, we introduce the Bidomain system, describing the propagation of the electric signal in the cardiac tissue. In Sec. \ref{numerical methods} we give an insight into the space discretization and we formulate the fully implicit time scheme. Moreover we provide some properties related to the system arising from the discretization. A brief overview of non-overlapping DD spaces and objects as well as an introduction to BDDC preconditioner is provided in Sec. \ref{dual-primal methods}. The novel convergence rate estimate is then proved in Sec. \ref{convergence coupled}, followed by extensive parallel numerical tests in Section \ref{parallel tests}.


\section{The cardiac electrical model} \label{cardiacmodel}

\subsection{Bidomain model}
We consider here the macroscopic Bidomain representation of the cardiac tissue, which is represented as two interpenetrating domains \cite{franzone2014mathematical, pennacchio2005multiscale}.  
These two anisotropic continuous media, named intra- and extracellular domains, are assumed to coexist at every point of the cardiac tissue and to be connected by a distributed continuous cellular membrane which fills the complete volume. 

The cardiac tissue consists of a setting of fibers that rotates counterclockwise and that is arranged in laminar sheets running radially from the epi- to the endocardium (the outer and inner surface of the heart respectively).

At each point $\xx$ of the cardiac domain $\Omega$ it is possible to define an orthonormal triplet of vectors $\aal(\xx)$ parallel to the local fiber direction,  $\aat(\xx)$ and $\aan(\xx)$ tangent and orthogonal to the laminar sheets respectively and transversal to the fiber axis (\cite{legrice1995laminar}). Moreover, if $\sigma_{l, t, n}^{i,e}$ are conductivity coefficients in the intra- and extracellular domain along the corresponding direction, it is possible to define the conductivity tensors $D_i$ and $D_e$ of the two media as
\begin{equation*}
	D_i (\xx) = \sigma_l^i \aal (\xx) + \sigma_t^i \aat (\xx)  + \sigma_n^i \aan (\xx)  , 
	\qquad
	D_e (\xx) = \sigma_l^e \aal (\xx) + \sigma_t^e \aat (\xx)  + \sigma_n^e \aan (\xx) ,
\end{equation*}
which describe the anisotropy of the intra- and extracellular media. For our theoretical purpose, we assume that $\sigma_{l,t,n}^{i,e}$ are constant in space. 

With these premises, we can obtain the parabolic-parabolic formulation of the Bidomain model as the following non-linear parabolic reaction-diffusion system, 
\begin{align}\label{bido}
	\begin{dcases}
		\chi C_m  \dvdt - \text{div} \left( D_i \cdot \nabla u_i \right) + \Ion (v,w) = \Iapp^i	  & \text{in } \Omega \times [ 0,T ],	\\
		-\chi C_m  \dvdt - \text{div} \left( D_e \cdot \nabla u_e \right) - \Ion (v,w) = \Iapp^e	  &  \text{in } \Omega \times [ 0,T ],	\\
		\dwdt - R(v,w) = 0 		&\text{in } \Omega \times [ 0,T ], \\
		v(x,t) = u_i(x,t) - u_e(x,t) 		&\text{in } \Omega \times [ 0,T ], \\
	\end{dcases}
\end{align}
being $u_i$ and $u_e$ the intra and extracellular electric potential and $w$ the gating variables. 
The equations describing the propagation of the electric signal through the cardiac tissue are coupled through the reaction term to a system of Ordinary Differential Equations (ODEs) which describes the ionic currents flowing inward and outward the cell membrane.
Regarding the boundaries, we assume that the heart is electrically insulated by requiring zero-flux boundary conditions
\begin{equation*}
	\textbf{n}^T D_i \nabla u_i  = 0, \qquad \textbf{n}^T D_e \nabla u_e  = 0			\qquad \text{on } \partial \Omega \times [0,T]
\end{equation*}
and compatibility condition
$
\int_{\Omega} \Iapp^i dx = \int_{\Omega} \Iapp^e dx,
$
where $\Iapp^{i,e}$ are the intra- and extracellular applied currents, with initial values
\begin{equation*}
	v(x,0) =   u_i(x,0) - u_e(x,0) = u_{i,0}(x) - u_{e,0}(x) \qquad w(x,0) = w_0(x).
\end{equation*}

See \cite{franzone2002degenerate} for results on existence, uniqueness and regularity of the solution of (\ref{bido}).

\subsection{Ionic current model}
In this work we consider a phenomenological ionic model, derived from a modification of the renowned FitzHugh--Nagumo model \cite{fitzhugh1961impulses, fitzhugh1969mathematical}: indeed, the Roger--McCulloch ionic model \cite{rogers1994collocation} overcomes the hyperpolarization of the cell during the repolarization phase by adding a non-linear dependence between the transmembrane potential and the gating. 
In this case, $\Ion(v,w)$ and $R(v,w)$ are given by
\begin{equation*}
	\Ion (v,w)  = G \ v \left( 1 - \dfrac{v}{v_{th}}  \right) \left( 1 - \dfrac{v}{v_{p}}  \right) + \eta_1 v w, 
	\qquad 
	R(v,w) = \eta_2 \left( \dfrac{v}{v_p} - w \right),
\end{equation*}
where $G$, $v_{th}$, $v_p$, $\eta_1$ and $\eta_2$ are constant coefficients.

\section{Space discretization and implicit time scheme}\label{numerical methods}

\subsection{Weak formulation and space discretization} 
Let the cardiac domain $\Omega \subset \mathbb{R}^3$ be a bounded open Lipschitz set with Lipschitz continuous boundary. Consider the functional space $	V = H^1(\Omega) $ and define the elliptic bilinear form associated with the intra- and extracellular conductivity tensors
\begin{equation*}
	a_{i,e} \left( \varphi, \psi \right) = \int_{\Omega} D_{i,e} \nabla \varphi \cdot \nabla \psi, 
	\qquad
	\forall \varphi, \psi \in V.
\end{equation*}
Then, find $u_{i,e} \in  L^2(0,T; V) $ and $w \in L^2(0,T; L^2(\Omega))$ such that $\forall t \in (0,T)$ 
\begin{equation}\label{bidoweak}
	\begin{dcases*}
		\chi C_m \dfrac{\partial}{\partial t} \innerprod{v}{\hat{u}_i} +  \bilformi{u_i}{\hat{u}_i} +  \innerprod{\Ion (v, w) }{\hat{u}_i} = \innerprod{I_{\text{app}}^i}{\hat{u}_i} 			\\
		-\chi C_m \dfrac{\partial}{\partial t} \innerprod{v}{\hat{u}_e} +  \bilforme{u_e}{\hat{u}_e} - \innerprod{\Ion (v, w) }{\hat{u}_e} = \innerprod{I_{\text{app}}^e}{\hat{u}_i} 	\\
		\dfrac{\partial}{\partial t} \innerprod{w}{\hat{w}} -  \innerprod{R(v, w)}{\hat{w}} = 0 		
	\end{dcases*}
\end{equation} 
$\forall \hat{u}_{i,e} \in V$ and $\forall \hat{w} \in L^2(\Omega)$.\\

The model (\ref{bidoweak}) is discretized in space by the finite element method, where the domain $\Omega$ is discretized by a structured quasi-uniform grid of hexaedral isoparametric $Q_1$ elements. Denote by $V_h \subset V$ be the associated finite element space, with the same basis functions $\left\{ \varphi_p \right\}_{p=1}^{N_h}$ for all variables $u_{i,e}$ and $w$ and denote by $A_{i,e}$ and $M$ be the stiffness and mass matrices with entries
\begin{equation}\label{stiffandmass}
	\left\{ A_{i,e} \right\}_{nm} = \int_{\Omega} \left(  \nabla \varphi_n  \right)^T D_{i,e} \cdot \nabla \varphi_m , 	\qquad 		\left\{ M \right\}_{nm} = \int_{\Omega} \varphi_n \varphi_m .
\end{equation}
The non-linear term is approximated by ionic current interpolation
\begin{equation*}
	\innerprod{\Ion (v,w) }{\varphi_p}  = \sumLNnod{ \Ion (v_l, w_l)} \innerprod{\varphi_l}{\varphi_p}.
\end{equation*}
With these choices, we thus need to solve at each time step, the semi-discrete Bidomain model
\begin{equation*}
	\begin{dcases*}
		\displaystyle
		\chi C_m \mathcal{M} \dfrac{\partial}{\partial t} 
		\begin{pmatrix}
			\bm{u}_i \\ \bm{u}_e
		\end{pmatrix}
		+ \mathcal{A} 
		\begin{pmatrix}
			\bm{u}_i \\ \bm{u}_e
		\end{pmatrix}
		+ 
		\begin{pmatrix}
			M \ \Ion (\bm{v}, \bm{w}) \\ -M \ \Ion (\bm{v}, \bm{w}) 
		\end{pmatrix}
		= 
		\begin{pmatrix}
			M \ \Iapp^i  \\ -M \ \Iapp^e
		\end{pmatrix},
		\\
		\displaystyle
		\dfrac{\partial \bm{w}}{\partial t} = R \left( \bm{v}, \bm{w} \right),
	\end{dcases*} 
\end{equation*}
where 
\begin{equation*}
	\mathcal{A} =
	\begin{bmatrix}
		A_i 	& 0 	\\
		0 		& A_e
	\end{bmatrix},
	\qquad
	\mathcal{M} = 
	\begin{bmatrix}
		M 		& -M 	\\
		-M 		& M
	\end{bmatrix}.
\end{equation*}

For simplicity, from now on we write
$
\sumL{} := \sumLNnod{}
$
assuming that we are adding contributions from all the $N_h$ nodes of the discretization.

\subsection{Fully implicit time scheme}
In the literature, common alternatives takes into account implicit-explicit (IMEX) time discretization schemes \cite{colli2018numerical, zampini2014dual}, where the diffusion term is treated implicitly while the remaining terms are treated explicitly, or, more generally, operator splitting \cite{chen2017splitting, chen2019two, sundnes2005operator}. Others effective strategies rely on a decoupling strategy (see e.g. Refs. \cite{dickopf2014design, huynh2021parallel, munteanu2009decoupled, munteanu2009scalable, scacchi2011multilevel}) where at each time step the microscopic and macroscopic models are solved successively. We propose here a fully implicit time discretization of the Bidomain system, in the same fashion as in \cite{murillo2004fully}.
At the $n$-th time step,

\begin{enumerate}
	\item compute the intra- and extracellular potentials as well as the gating by solving the non-linear system \mbox{$F_{bido}(\uinn, \uenn, \wnn) = 0$} derived from the Backward Euler scheme applied to the Bidomain system, 
	\begin{adjustwidth*}{-5mm}{0mm}
		\begin{align}\label{nonlinsysbidocoupled}
			F (&\uinn, \ \uenn, \ \wnn) := 
			\begin{pmatrix}
				F_1 (\uinn, \uenn, \wnn) \\
				F_2 (\uinn, \uenn, \wnn) \\
				F_3 (\uinn, \uenn, \wnn)
			\end{pmatrix} \\
			&=\begin{pmatrix}
				\chi C_m \innerprod{\vnn}{\varphi_i} + \tau \bilformi{\uinn}{\varphi_i} + \tau \innerprod{\Ion (\vnn, w) }{\varphi_i} \\
				\qquad \qquad \qquad \qquad \qquad \qquad \qquad \qquad - \left[ \chi C_m \innerprod{\vn}{\varphi_i} + \tau \innerprod{\Iapp}{\varphi_i} \right]
				\\
				-\chi C_m \innerprod{\vnn}{\varphi_e} + \tau \bilforme{\uenn}{\varphi_e} - \tau \innerprod{\Ion (\vnn, w) }{\varphi_e} \\
				\qquad \qquad \qquad \qquad \qquad \qquad \qquad \qquad - \left[ \chi C_m \innerprod{\vn}{\varphi_e} + \tau \innerprod{\Iapp}{\varphi_e} \right]
				\\
				\innerprod{\wnn}{\varphi_w} - \tau \innerprod{R(\vnn, \wnn)}{\varphi_w} - \innerprod{\wn}{\varphi_w}
			\end{pmatrix}, \nonumber
		\end{align}
	\end{adjustwidth*}
	where $\varphi_i, \varphi_e$ and $\varphi_w$ are the test functions related to $u_i$, $u_e$ and $w$ respectively.
	
	\begin{enumerate}
		\item[2.1] Apply an exact Newton method for the solution of the nonlinear system (\ref{nonlinsysbidocoupled}); given the initial guess $(u_i^0, u_e^0, w^0)$, at the $k^{th}$ iteration of the Newton loop, solve the linear system of equations
		\begin{align}\label{jacobcoupledbido}
			\begin{dcases}
				\sumL{ \dfrac{\partial F_1}{\partial \uil}} (\uik, \uek, \wk) \silkk + \sumL{ \dfrac{\partial F_1}{\partial \uel} } (\uik, \uek, \wk) \silkk + \\
				\qquad \qquad \qquad \qquad + \sumL{ \dfrac{\partial F_1}{\partial w_l} } (\uik, \uek, \wk) \silkk =  -F_1(\uik, \uek, \wk) \\
				\sumL{ \dfrac{\partial F_2}{\partial \uil}} (\uik, \uek, \wk) \selkk + \sumL{ \dfrac{\partial F_2}{\partial \uel} } (\uik, \uek, \wk) \selkk + \\
				\qquad \qquad \qquad \qquad + \sumL{ \dfrac{\partial F_2}{\partial w_l} } (\uik, \uek, \wk) \selkk =  -F_2(\uik, \uek, \wk) \\
				\sumL{ \dfrac{\partial F_3}{\partial \uil}} (\uik, \uek, \wk) \swlkk + \sumL{ \dfrac{\partial F_3}{\partial \uel} } (\uik, \uek, \wk) \swlkk + \\
				\qquad \qquad \qquad \qquad + \sumL{ \dfrac{\partial F_3}{\partial w_l} } (\uik, \uek, \wk) \swlkk =  -F_3(\uik, \uek, \wk) 
			\end{dcases},
		\end{align}
		explicitly written as
		\begin{adjustwidth*}{0mm}{-10mm}
			\begin{align*}
				\begin{dcases}
					\chi C_m \innerprod{\sikk - \sekk}{\varphi_i}  + \tau \innerprod{\sumcoupdidvk  \left( \silkk - \selkk \right) \psi_l}{\varphi_i}  \\
					\qquad \qquad + \tau \bilformi{\sikk}{\varphi_i} + \tau \innerprod{\sumcoupdidwk \swlkk \ \psi_l}{\varphi_i} = -  F_1(\uik, \uek, \wk) \\
					-\chi C_m \innerprod{\sikk - \sekk}{\varphi_e} - \tau \innerprod{\sumcoupdidvk \left( \silkk - \selkk \right)\psi_l}{\varphi_e}  \\
					\qquad \qquad + \tau \bilforme{\sekk}{\varphi_e}  - \tau \innerprod{\sumcoupdidwk \swlkk \ \psi_l}{\varphi_e} = -  F_2(\uik, \uek, \wk) \\
					\innerprod{\swkk}{\varphi_k} - \tau \innerprod{\sumcoupdRdvk \left( \silkk - \selkk \right) \psi_l }{\varphi_w} \\
					\qquad 	\qquad- \tau \innerprod{\sumcoupdRdwk \swkk \ \psi_l}{\varphi_w} = -  F_3(\uik, \uek, \wk) 
				\end{dcases}
			\end{align*}
		\end{adjustwidth*}
		where 
		\begin{equation*}
			\sikk = \sumL{\silkk} = \delta \uikk ,		 \quad 		\sekk = \sumL{\selkk} = \delta \uekk,		\quad			\swkk = \sumL{\swlkk} = \delta \wkk
		\end{equation*}
		are the increments at time step $k$ and $\psi_l$ the $l$-th nodal basis function. \\
		In matricial form, this means to solve the linear system 
		\begin{eqnarray}\label{Jacobiancoupledsystem}
			\mathbf{JF}^k \mathbf{s}^{k+1} = - \mathbf{F} (\mathbf{u}^k)
		\end{eqnarray}
		where, by implying $\dfrac{\partial g^k}{\partial t} = \dfrac{\partial g}{\partial t} (\mathbf{v}^k, \mathbf{w}^k)  $, with $g = \left\{ \Ion, R \right\}$ and $t = \left\{ v, w \right\}$,
		\begin{adjustwidth*}{0mm}{-10mm}
			\begin{equation*}\label{jacobiancoupledmatrix}
				\mathbf{JF}^k =	
				\begin{bmatrix}
					\chi C_m M + \tau A_i + \tau M \dfrac{\partial \Ion^k}{\partial v}  		&- \chi C_m M - \tau M \dfrac{\partial \Ion^k}{\partial v} 	& \tau M \dfrac{\partial \Ion^k}{\partial w}  \\
					- \chi C_m M - \tau M \dfrac{\partial \Ion^k}{\partial v} 				&\chi C_m M + \tau A_e + \tau M \dfrac{\partial \Ion^k}{\partial v} 		&-\tau M \dfrac{\partial \Ion^k}{\partial w} \\
					-\tau M \dfrac{\partial R^k}{\partial v} 			 	& \tau M \dfrac{\partial R^k}{\partial v}   	& \left( 1 - \tau \dfrac{\partial R^k}{\partial w}  \right) M
				\end{bmatrix},
			\end{equation*}
		\end{adjustwidth*}
		\begin{equation*}
			\mathbf{s}^{k+1} = 
			\begin{pmatrix}
				\mathbf{\si}^{k+1} \\ \mathbf{\se}^{k+1} \\ \mathbf{w}^{k+1}
			\end{pmatrix},
			\qquad 
			\mathbf{F} (\mathbf{u}^k) = 
			\begin{pmatrix}
				- M  F_1(\mathbf{u_i}^k, \mathbf{u_e}^k, \mathbf{w}^k)   \\ -M  F_2( \mathbf{u_i}^k, \mathbf{u_e}^k , \mathbf{w}^k) \\  -M  F_3( \mathbf{u_i}^k, \mathbf{u_e}^k , \mathbf{w}^k) 
			\end{pmatrix},
		\end{equation*}
		with the same stiffness and mass matrices defined in (\ref{stiffandmass}).
		\item[2.2] Update
		\begin{equation*}
			\uikk = \uik + \sikk, 	\qquad   \uekk = \uek + \sekk, 	\qquad	\wkk = \wk + \swkk.
		\end{equation*}
	\end{enumerate}
\end{enumerate}

We drop the index $k$ from now on, unless an explicit ambiguity occurs.

\subsection{Properties of the symmetric part of the bilinear form associated with the Bidomain Jacobian system}
The Jacobian system $\mathbf{JF}$ in (\ref{Jacobiancoupledsystem}) is non-symmetric, due to the inclusion of the ionic model. For this reason, the iterative solver must be addressed to the solution of such type of systems, such as the Generalized Minimal Residual (GMRES) method \cite{saad1986gmres}. 

Following the work of  \cite{tu2008balancing} (where BDDC preconditioners are applied to the solution of non-symmetric problems arising from the discretization of advection-diffusion PDEs) in order to properly prove the convergence rate estimate of the solver, we need to associate to the linear system (\ref{Jacobiancoupledsystem}) a bilinear form and to analyze its symmetric and skew-symmetric parts. 

We reformulate problem (\ref{jacobcoupledbido}) in variational form: find $s = \left( \si, \se, \sw \right) \in \mathbf{V_h}$, being $\mathbf{V_h} = V_h \times V_h \times V_h$,  such that
\begin{equation}\label{bilcoupledform}
	a (s, \phi) = -F_1 (s)  - F_2 (s) - F_3 (s) 	\qquad 	\forall \phi = \left( \varphi_i, \varphi_e, \varphi_w \right) \in \mathbf{V_h},
\end{equation}
where
\begin{equation*} 
	\begin{aligned}
		a (s, &\phi) = \chi C_m \innerprod{\si - \se}{\varphi_i - \varphi_e} + \innerprod{\sw}{\varphi_w} +  \tau \bilformi{\si}{\varphi_i} + \tau \bilforme{\se}{\varphi_e} \\
		& + \tau \innerprod{\sumcoupdidv \left( \sil - \sel \right) \psi_l}{\varphi_i - \varphi_e} + \tau \innerprod{\sumcoupdidw \swl \ \psi_l}{\varphi_i - \varphi_e} \\
		& - \tau \innerprod{\sumcoupdRdv \left( \sil - \sel \right) \psi_l}{\varphi_w} - \tau \innerprod{\sumcoupdRdw \swl \ \psi_l}{\varphi_w}
	\end{aligned}
\end{equation*}
being $\psi_l$ the $l$-th nodal basis function.
The symmetric and skew-symmetric parts of $a(s, \phi)$ respectively are denoted by
\begin{equation*}
	\begin{aligned}
		b (s, \phi) &= 2 \chi C_m \innerprod{\si - \se}{\varphi_i - \varphi_e} + 2 \innerprod{\sw}{\varphi_w} +  2 \tau \bilformi{\si}{\varphi_i} + 2 \tau \bilforme{\se}{\varphi_e} \\
		& + 2 \tau \innerprod{\sumcoupdidv \left( \sil - \sel \right) \psi_l}{\varphi_i - \varphi_e} - 2 \tau \innerprod{\sumcoupdRdw \swl \ \psi_l}{\varphi_w} \\
		& + \tau \innerprod{\sumcoupdidw \left( \sil - \sel \right) \psi_l}{\varphi_w} +  \tau \innerprod{\sumcoupdidw \swl \ \psi_l}{\varphi_i - \varphi_e} \\
		& - \tau \innerprod{\sumcoupdRdv \left( \sil - \sel \right) \psi_l}{\varphi_w} -  \tau \innerprod{\sumcoupdRdv \swl \ \psi_l}{\varphi_i - \varphi_e} ,
	\end{aligned}
\end{equation*}
\begin{equation*}
	\begin{aligned}
		z (s, &\phi) = - \tau \innerprod{\sumcoupdidw \left( \sil - \sel \right) \psi_l}{\varphi_w} +  \tau \innerprod{\sumcoupdidw \swl \ \psi_l}{\varphi_i - \varphi_e} \\
		& - \tau \innerprod{\sumcoupdRdv \left( \sil - \sel \right) \psi_l}{\varphi_w} +  \tau \innerprod{\sumcoupdRdv \swl \ \psi_l}{\varphi_i - \varphi_e} 
	\end{aligned}
\end{equation*}
In the same way the system of linear equations (\ref{jacobcoupledbido}) correspond to the finite element problem (\ref{bilcoupledform}), we can denote by $B$ and $Z$ the symmetric and skew-symmetric parts of $\mathbf{JF}$, which correspond to the bilinear forms $b(\cdot, \cdot)$ and $z(\cdot, \cdot)$ respectively:
\begin{equation*}
	B = 
	\begin{bmatrix}
		2 \left( \chi C_m M + \tau A_i + \tau M \dfrac{\partial \Ion}{\partial v} \right)		&2 \left( - \chi C_m M - \tau M \dfrac{\partial \Ion}{\partial v} \right)	& \tau M \left( \dfrac{\partial \Ion}{\partial w} - \dfrac{\partial R}{\partial v} \right) \\
		2 \left( - \chi C_m M - \tau M \dfrac{\partial \Ion}{\partial v} \right) 	&2 \left( \chi C_m M + \tau A_e + \tau M \dfrac{\partial \Ion}{\partial v} \right)		&-\tau M \left( \dfrac{\partial \Ion}{\partial w} - \dfrac{\partial R}{\partial v} \right) \\
		\tau M \left( \dfrac{\partial \Ion}{\partial w} - \dfrac{\partial R}{\partial v} \right) 	& - \tau M \left( \dfrac{\partial \Ion}{\partial w} - \dfrac{\partial R}{\partial v}\right) 	&2  \left( 1 - \tau \dfrac{\partial R}{\partial w}  \right) M
	\end{bmatrix}
\end{equation*}
\begin{equation*}
	Z = 
	\begin{bmatrix}
		0 		&0 			&\tau M \left( \dfrac{\partial \Ion}{\partial w} + \dfrac{\partial R}{\partial v}	\right)		\\
		0 		&0			&-\tau M \left( \dfrac{\partial \Ion}{\partial w} + \dfrac{\partial R}{\partial v} \right)	\\
		-\tau M \left( \dfrac{\partial \Ion}{\partial w} + \dfrac{\partial R}{\partial v}	\right)	&\tau M \left( \dfrac{\partial \Ion}{\partial w} + \dfrac{\partial R}{\partial v}	\right)	&0
	\end{bmatrix},
\end{equation*}
where we simplify the notation by writing $\dfrac{\partial g}{\partial t} := \dfrac{\partial g}{\partial t}  (\mathbf{v}, \mathbf{w})$, with $g = \left\{ \Ion, R \right\}$ and $t = \left\{ v, w \right\}$. \\

In the same spirit as in \cite{huynh2021parallel, munteanu2009decoupled}, it is possible to show that $b(\cdot, \cdot)$ is continuous and coercive with respect to an appropriate norm. 

\begin{lemma} \label{symbilformcontinuity}
	Assume that 
	\begin{equation*}
		\chi C_m + \tau \dfrac{\partial \Ion}{\partial v_l} (v, w) \geq c_1, \quad 1 - \tau \dfrac{\partial R}{\partial w_l} (v, w) \geq c_2, \quad \dfrac{\partial \Ion}{\partial w_l} (v, w) - \dfrac{\partial R}{\partial v_l} (v, w) \geq 0,   		
	\end{equation*}
	$c_{1,2} \in \mathbb{R}^+$ and $\forall l = 1, \dots, N$.
	Then the bilinear form $b(\cdot, \cdot)$ is continuous and coercive with respect to the norm $\normtaucoup{\cdot}$, defined as
	\begin{equation*}
		\normtaucoup{u}^2 := (1 + \tau) \norm{u_1 - u_2}{\Omega}^2 + (1 - \tau) \norm{u_3}{\Omega}^2 + \tau \bilformi{u_1}{u_1} + \tau \bilforme{u_2}{u_2},
	\end{equation*}
	$\forall u = (u_1, u_2, u_3) \in \mathbf{V_h}$.
\end{lemma}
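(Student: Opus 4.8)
The plan is to read off both properties directly from the displayed formula for $b(\cdot,\cdot)$, using the nodal structure of the ionic interpolation together with the three sign conditions in the statement. Two bookkeeping facts are used throughout. First, on the structured quasi-uniform $Q_1$ grid the consistent mass matrix is spectrally equivalent, with constants independent of $h$, to the lumped mass matrix, so that the nodewise hypotheses translate into $L^2(\Omega)$ inequalities up to such constants, and a pairing $\innerprod{\sum_l g_l\,c_l\psi_l}{\phi}$ with $|g_l|\le G$ is controlled in $L^2$ by $G$ times the corresponding pairing with $g_l\equiv 1$. Second, from the definition one has $b(s,s)=2\,a(s,s)$, so coercivity of $b$ is the same statement as coercivity of the form $a$ in \eqref{bilcoupledform}; note also that $\tau<1$ is already required for $\normtaucoup{\cdot}$ to be a norm, and that small $\tau$ is the regime of interest.

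For coercivity I would put $\phi=s$ and regroup $b(s,s)$ into four pieces. The diffusion piece $2\tau\,\bilformi{\si}{\si}+2\tau\,\bilforme{\se}{\se}$ is retained. The transmembrane piece combines $2\chi C_m\norm{\si-\se}{\Omega}^2$ with the $\partial_v\Ion$ pairing into $2\innerprod{\sum_l(\chi C_m+\tau\,\partial_{v_l}\Ion)(\sil-\sel)\psi_l}{\si-\se}$ (using $\si-\se=\sum_l(\sil-\sel)\psi_l$), which is $\ge 2c_1\norm{\si-\se}{\Omega}^2$ by the first hypothesis. The gating piece combines $2\norm{\sw}{\Omega}^2$ with the $\partial_w R$ pairing into $2\innerprod{\sum_l(1-\tau\,\partial_{w_l}R)\swl\psi_l}{\sw}\ge 2c_2\norm{\sw}{\Omega}^2$ by the second hypothesis. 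The remaining four cross terms combine into $\tau\innerprod{\sum_l d_l(\sil-\sel)\psi_l}{\sw}+\tau\innerprod{\sum_l d_l\,\swl\psi_l}{\si-\se}$, with $d_l:=\partial_{w_l}\Ion-\partial_{v_l}R\ge 0$ by the third hypothesis; since the relevant derivatives of $\Ion$ and $R$ are bounded at the state at which the Jacobian is evaluated, Young's inequality bounds the absolute value of this piece by $\tau\,(\max_l d_l)\big(\varepsilon\norm{\si-\se}{\Omega}^2+\varepsilon^{-1}\norm{\sw}{\Omega}^2\big)$. Choosing $\varepsilon$ suitably and then taking $\tau$ small enough absorbs this piece into the previous two, which leaves
\[
b(s,s)\ \ge\ \tau\,\bilformi{\si}{\si}+\tau\,\bilforme{\se}{\se}+c_1\norm{\si-\se}{\Omega}^2+c_2\norm{\sw}{\Omega}^2\ \ge\ \alpha\,\normtaucoup{s}^2
\]
with $\alpha=\alpha(c_1,c_2,\tau)>0$, since $1+\tau$ and $1-\tau$ are bounded above and below.

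For continuity I would bound the finitely many terms of $b(s,\phi)$ one by one. The diffusion terms go through the Cauchy--Schwarz inequality for the symmetric positive forms $a_{i,e}$, e.g. $2\tau\,\bilformi{\si}{\varphi_i}\le 2\sqrt{\tau\,\bilformi{\si}{\si}}\,\sqrt{\tau\,\bilformi{\varphi_i}{\varphi_i}}\le 2\,\normtaucoup{s}\,\normtaucoup{\phi}$. The mass-type and reaction-type pairings go through Cauchy--Schwarz in $L^2(\Omega)$, after bounding the nodal coefficients $\partial_{v_l}\Ion,\partial_{w_l}\Ion,\partial_{v_l}R,\partial_{w_l}R$ by their suprema and using the $h$-independent comparison mentioned above; each such term carries a factor $1$, $\chi C_m$ or $\tau$ and is controlled by $\norm{\si-\se}{\Omega}\le(1+\tau)^{-1/2}\normtaucoup{s}$, $\norm{\sw}{\Omega}\le(1-\tau)^{-1/2}\normtaucoup{s}$ and the analogous bounds for $\phi$. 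Summing gives $|b(s,\phi)|\le C\,\normtaucoup{s}\,\normtaucoup{\phi}$ with $C$ depending on $\chi C_m$, on the suprema of the ionic and gating derivatives, and on the fixed time step $\tau$ through $(1-\tau)^{-1}$, but not on $h$.

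I expect the main obstacle to be the four cross-coupling pairings in the coercivity step: they are not sign-definite, so they cannot simply be dropped and must be absorbed, which is precisely where the nonnegativity of $d_l=\partial_{w_l}\Ion-\partial_{v_l}R$ and the smallness of $\tau$ enter. A secondary technical point is that the ionic interpolation produces the consistent mass matrix while the hypotheses are stated nodewise; the spectral equivalence with the lumped mass matrix on the quasi-uniform grid is what turns the nodewise conditions into the quadratic-form inequalities used for the transmembrane, gating and cross terms.
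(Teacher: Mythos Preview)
The paper does not supply a proof of this lemma; it only states the result and points to the approach in \cite{huynh2021parallel, munteanu2009decoupled}. Your argument is along the right lines and is essentially what those references do: regroup $b(s,s)$ into the diffusion, transmembrane, gating, and cross pieces; use the first two sign hypotheses to bound the diagonal pieces below; and control the cross coupling by Young's inequality. The continuity argument via term-by-term Cauchy--Schwarz, together with the lumped/consistent mass equivalence to pass from nodewise bounds to $L^2$ bounds, is also standard and correct.

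One point deserves to be made explicit. Your absorption step needs
\[
\tau\,(\max_l d_l)\bigl(\varepsilon\|\si-\se\|_{L^2}^2+\varepsilon^{-1}\|\sw\|_{L^2}^2\bigr)\ \le\ 2c_1\|\si-\se\|_{L^2}^2+2c_2\|\sw\|_{L^2}^2,
\]
which after optimizing $\varepsilon$ amounts to $\tau\,\max_l d_l<2\sqrt{c_1c_2}$. This smallness restriction on $\tau$ is not among the stated hypotheses; you invoke it informally (``taking $\tau$ small enough''), and the paper does the same implicitly through Remarks~3.1--3.2 and the physiological parameter regime. Just be aware that the third hypothesis $d_l=\partial_{w_l}\Ion-\partial_{v_l}R\ge0$ does \emph{not} by itself make the cross pairing sign-definite (the product $(\sil-\sel)\swl$ can have either sign), so it cannot simply be dropped; its role in your argument is only to let you write $\max_l d_l$ rather than $\max_l|d_l|$. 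If you want the lemma to stand on its stated hypotheses alone, the cleanest fix is to add the condition $\tau\,\max_l d_l<2\sqrt{c_1c_2}$, or equivalently to fold it into a smallness assumption on $\tau$ as the paper's remarks already do in practice.
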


\begin{remark}
	\emph{The norm $\normtaucoup{\cdot}$ is well defined, as the quantity $1-\tau$ is always positive (typical computational values for $\tau$ are less than $10^{-2}$).}
\end{remark}

\begin{remark}
	\emph{As in the case of the decoupled strategy (see \cite{huynh2021parallel, munteanu2009decoupled}), the hypothesis of non-negativity of the above Lemma is always satisfied for any time step $\tau \leq 0.37$ ms if we consider the Roger-McCulloch ionic model. Indeed, numerical computations of $\chi C_m + \tau \dfrac{\partial I_\text{ion}}{\partial v}$ validate this assumption (see Fig. (\ref{hypcoupplot}), above). Regarding the other two hypothesis, it is easy to compute analytically that $ 1 - \tau \dfrac{\partial R}{\partial w_l} = 1 + \eta_2 \tau \geq 0$ for any value of $\tau$, being $\eta_2$ a physiological parameter, while the last inequality is always satisfied for any $v \geqslant 2 \cdot 10^{-4}$. This request is not restrictive, as for those values of the transmembrane potential the tissue is almost at rest. }
\end{remark}

\begin{figure}[H]
	\centering
	\includegraphics[scale=.35]{ 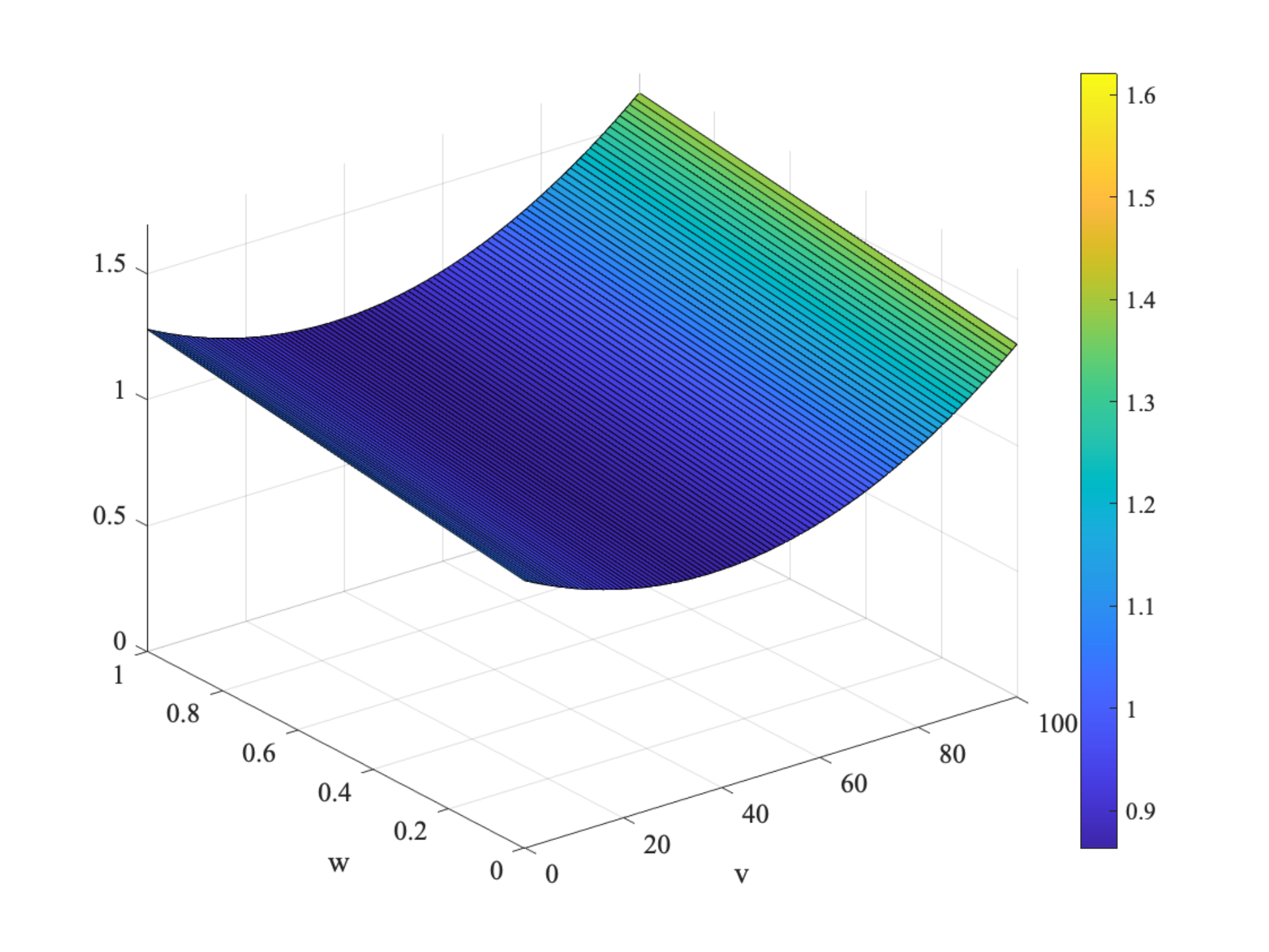}
	\includegraphics[scale=.35]{ 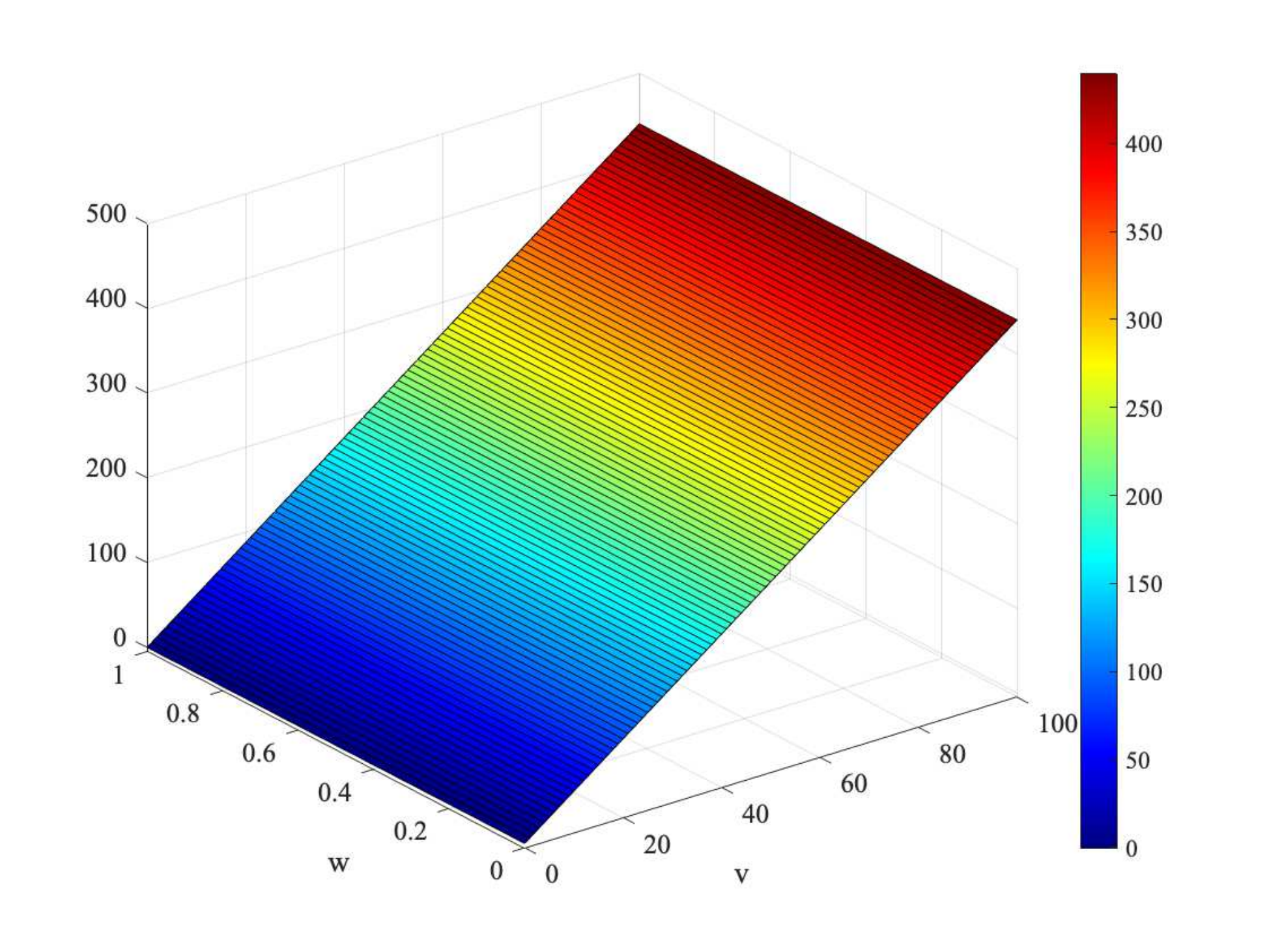}
	\caption{Surface plots of $\chi C_m + \tau \frac{\partial I_\text{ion}}{\partial v}$ (top) and $\frac{\partial \Ion}{\partial w} - \frac{\partial R}{\partial v}$ (bottom), with $C_m = 1 \frac{mF}{cm^3}$, $\chi = 1$ and $\tau=0.05$ ms, which are values usually employed in numerical experiments.}
	\label{hypcoupplot}
\end{figure}

As an immediate consequence of the continuity and coercivity of the symmetric bilinear form $b(\cdot, \cdot)$, it is possible to prove the following bounds.

\begin{lemma}\label{ellipcoupbound}
	Assuming that the conductivity coefficients are constant in space, the bilinear form $b(\cdot, \cdot)$ satisfies the bounds
	\begin{align*}
		b (s,s) &\leq 2 \left[ \left( \chi C_m + \tau K_{M,I} \right) \norm{\si - \se}{\Omega}^2 + (1 - \tau K_{M, R}) \norm{\sw}{\Omega}^2 \right. \\ &\left. \qquad \qquad \qquad\qquad \qquad  \qquad \qquad + \tau \sigma^i_M \seminormHone{\si}{\Omega}^2 + \tau \sigma^e_M \seminormHone{\se}{\Omega}^2 \right], \\
		b (s, s) &\geq 2 \left[ \left( \chi C_m + \tau K_{m,I} \right) \norm{\si - \se}{\Omega}^2 + (1 - \tau K_{m, R}) \norm{\sw}{\Omega}^2 \right. \\ &\left. \qquad \qquad \qquad \qquad \qquad \qquad \qquad + \tau \sigma^i_m \seminormHone{\si}{\Omega}^2 + \tau \sigma^e_m \seminormHone{\se}{\Omega}^2 \right]
	\end{align*}
	where 
	\begin{equation*}
		\sigma^{i,e}_M = \max_{\bullet = \left\{ l, t, n \right\}} \sigma^{i,e}_\bullet, 		\qquad \qquad  \sigma^{i,e}_m = \min_{\bullet = \left\{ l, t, n \right\}} \sigma^{i,e}_\bullet,
	\end{equation*}
	and $K_{M, \ast}$, $K_{m, \ast}$, independent from the subdomain diameter $H$ and the mesh size $h$.
\end{lemma}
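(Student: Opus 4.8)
The plan is to prove the two inequalities directly by inserting the explicit expression of $b(s,s)$ and estimating each term. I would start from the definition of $b(\cdot,\cdot)$ evaluated at $\phi = s = (\si,\se,\sw)$, so that $\varphi_i - \varphi_e$ becomes $\si - \se$ and $\varphi_w$ becomes $\sw$. The first four terms give exactly $2\chi C_m \norm{\si-\se}{\Omega}^2 + 2\norm{\sw}{\Omega}^2 + 2\tau\,\bilformi{\si}{\si} + 2\tau\,\bilforme{\se}{\se}$, and the remaining five terms are all of the form $\tau$ times an $L^2$ inner product of a sum $\sum_l (\partial g/\partial t_l)(v,w)\,\xi_l\,\psi_l$ against a finite element function, where $\xi_l \in \{\sil - \sel,\ \swl\}$. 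The diffusion terms are handled by the standard spectral bound $\sigma^{i,e}_m\,\seminormHone{\cdot}{\Omega}^2 \le \bilformi{\cdot}{\cdot} \le \sigma^{i,e}_M\,\seminormHone{\cdot}{\Omega}^2$, valid since the conductivity coefficients are constant in space; this produces the $\tau\sigma^{i,e}_{m/M}\seminormHone{\si,\se}{\Omega}^2$ terms.

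Next I would treat the reaction/coupling terms. The key observation, which is really just the content of Lemma \ref{symbilformcontinuity}, is that the quadratic form obtained by lumping together the $2\tau\innerprod{\sum_l(\partial\Ion/\partial v_l)(\sil-\sel)\psi_l}{\si-\se}$ term and the $2\chi C_m\norm{\si-\se}{\Omega}^2$ term is, up to the mass-matrix weights, controlled above and below by $(\chi C_m + \tau \max_l \partial\Ion/\partial v_l)$ and $(\chi C_m + \tau \min_l \partial\Ion/\partial v_l)$ times $\norm{\si-\se}{\Omega}^2$; similarly the $-2\tau\innerprod{\sum_l(\partial R/\partial w_l)\swl\psi_l}{\sw}$ term together with $2\norm{\sw}{\Omega}^2$ is controlled by $(1 - \tau\max_l \partial R/\partial w_l)$ and $(1 - \tau\min_l \partial R/\partial w_l)$ times $\norm{\sw}{\Omega}^2$. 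This is where the constants $K_{M,I} := \max_l (\partial\Ion/\partial v_l)(v,w)$, $K_{m,I} := \min_l(\partial\Ion/\partial v_l)(v,w)$, $K_{M,R}$, $K_{m,R}$ enter; by the boundedness of $\Ion$ and $R$ as $C^1$ functions on the relevant range, these are finite and independent of $H$ and $h$. The cross terms on the last two lines of $b$ — those pairing $\sum_l(\partial\Ion/\partial w_l)(\sil-\sel)\psi_l$ with $\sw$, $\sum_l(\partial\Ion/\partial w_l)\swl\psi_l$ with $\si-\se$, and the two analogous $\partial R/\partial v$ terms — are the genuinely awkward pieces: they are indefinite and mix $\si-\se$ with $\sw$. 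I would absorb them using Cauchy--Schwarz and Young's inequality in the form $|\innerprod{a}{b}| \le \tfrac{\varepsilon}{2}\norm{a}{\Omega}^2 + \tfrac{1}{2\varepsilon}\norm{b}{\Omega}^2$, choosing $\varepsilon$ so that the $\norm{\si-\se}{\Omega}^2$ and $\norm{\sw}{\Omega}^2$ contributions are reabsorbed into the positive leading terms; crucially, the sign hypothesis $\partial\Ion/\partial w_l - \partial R/\partial v_l \ge 0$ from Lemma \ref{symbilformcontinuity} means the "symmetric" combination $\partial\Ion/\partial w - \partial R/\partial v$ appearing in the off-diagonal blocks of $B$ already has a definite sign, which keeps the bookkeeping clean and lets all absorbed quantities be folded into the $K_{*,*}$ constants.

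Finally I would collect terms: after reabsorption the lower bound retains $2[(\chi C_m + \tau K_{m,I})\norm{\si-\se}{\Omega}^2 + (1-\tau K_{m,R})\norm{\sw}{\Omega}^2 + \tau\sigma^i_m\seminormHone{\si}{\Omega}^2 + \tau\sigma^e_m\seminormHone{\se}{\Omega}^2]$, and the upper bound the analogous expression with $K_{M,I}$, $K_{M,R}$, $\sigma^{i,e}_M$, provided the $K$ constants are defined to already include the contributions of the Young's-inequality absorption — i.e. $K_{M,I}$ is taken slightly larger than $\max_l \partial\Ion/\partial v_l$ to swallow the cross terms, and $K_{m,I}$ slightly smaller than $\min_l \partial\Ion/\partial v_l$, all still $H$- and $h$-independent since they depend only on bounds for $\Ion$, $R$ and their derivatives. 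I expect the main obstacle to be precisely this absorption step: one must verify that the indefinite cross terms can be dominated by the positive part \emph{with room to spare}, which relies on $1 - \tau K_{M,R} > 0$ and $\chi C_m + \tau K_{m,I} > 0$ — exactly the hypotheses of Lemma \ref{symbilformcontinuity} — and on the mass matrix $M$ being spectrally equivalent to the $L^2$ inner product on $V_h$, so that replacing $\innerprod{\sum_l \xi_l \psi_l}{\cdot}$ by $\norm{\cdot}{\Omega}$-type quantities costs only $h$-independent constants. Once that is in place the result follows by straightforward collection of terms.
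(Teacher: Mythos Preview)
The paper does not actually supply a proof of this lemma: it states only that the bounds follow ``as an immediate consequence of the continuity and coercivity of the symmetric bilinear form $b(\cdot,\cdot)$'' established in Lemma~\ref{symbilformcontinuity}. Your proposal is therefore not comparable line-by-line to anything in the paper, but it is exactly the direct computation one would carry out to substantiate that claim, and it is correct in outline: evaluate $b(s,s)$, bound the diffusion terms via $\sigma^{i,e}_m|\cdot|_{H^1}^2 \le a_{i,e}(\cdot,\cdot) \le \sigma^{i,e}_M|\cdot|_{H^1}^2$, bound the diagonal reaction terms by the extrema of $\partial_v\Ion$ and $\partial_w R$, and absorb the indefinite off-diagonal $(\partial_w\Ion - \partial_v R)$ cross terms into the $K_{\ast,\ast}$ constants via Young's inequality. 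Since the lemma only asserts the existence of $H,h$-independent constants $K_{M,\ast},K_{m,\ast}$ of this form (not their explicit values), your absorption step is legitimate, and the positivity needed for the lower bound to be useful downstream is precisely the content of the hypotheses in Lemma~\ref{symbilformcontinuity}, as you note.

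One small caution: when you pass from $\innerprod{\sum_l c_l\,\xi_l\,\psi_l}{\xi}$ with variable nodal weights $c_l$ to $(\min_l c_l)\norm{\xi}{\Omega}^2$ or $(\max_l c_l)\norm{\xi}{\Omega}^2$, this is not automatic for a general consistent mass matrix, since $\mathrm{diag}(c)M$ need not inherit the ordering. It does hold, with $h$-independent equivalence constants, because the $Q_1$ mass matrix on a quasi-uniform mesh is spectrally equivalent to its lumped (diagonal) version; you allude to this at the end, and it is worth making the step explicit rather than leaving it as a parenthetical.
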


\begin{remark}
	\emph{This result is extensible to the case of conductivity coefficients almost constant over each subdomain.}
\end{remark}

\section{Dual-Primal Iterative Substructuring Methods}\label{dual-primal methods}

\subsection{Non-overlapping Dual-Primal Algorithms} 
Let us decompose the cardiac domain $\Omega$ into $N$ non-overlapping subdomains $\Omega_j$, with $j=1,\dots,N$, such that
$
\overline{\Omega} = \cup_{j=1}^{N} \overline{\Omega}_j, 
\Omega_j \cap \Omega_k = \emptyset, 
\text{if }  j \neq k,
$
and the intersection between boundaries from different subdomains is either empty, a vertex, an edge or a face.
We define the interface $\Gamma$ as the set of points that belong to at least two subdomains, 
\begin{equation*}
	\Gamma := \cup_{j \neq k} \pO_j \cap \pO_k,
\end{equation*}
where $\pO_j$, $\pO_k$ are the boundaries of $\Omega_j$ and $\Omega_k$ respectively. 
To our purposes, we assume the subdomains to be shape-regular with a typical diameter of size $H$; moreover let them be the union of shape-regular finite elements of diameter $h$.

Denote the associated local finite element spaces by $W_j$ and partition it into its interior part $W_I^{(j)}$ and the finite element trace space $W_\Gamma^{(j)}$, such that
\begin{equation*}
	W_j =  W_I^{(j)} \oplus W_\Gamma^{(j)}.
\end{equation*}
In this dissertation, we consider variables on the Neumann boundaries $\pO_N$ as interior to a subdomain. By introducing the product spaces by
\begin{equation*}
	W := W_1 \times \cdots \times W_N = \prod_{j=1}^{N} W_j  ,	
	\qquad
	W_\Gamma :=  \prod_{j=1}^{N} W_\Gamma^{(j)},
\end{equation*}
we define $\widehat{W} \subset W$ as the subspace of functions of $W$, which are continuous in all interface variables between subdomains. Similarly we denote by $\widehat{W}_\Gamma \subset W_\Gamma$, the subspace formed by the continuous elements of $W_\Gamma$. 

The idea of dual-primal algorithms is to solve iteratively in the space $W$, while imposing continuity constraints (\emph{primal constraints}). 
Let $\widetilde{W}$ be the space of finite element functions in $W$, continuous in all primal variables, such that $\widehat{W} \subset \widetilde{W} \subset W$ and likewise $\widehat{W}_\Gamma \subset \widetilde{W}_\Gamma \subset W_\Gamma$. 

Let $W_\Pi^{(j)} \subset W_\Gamma^{(j)}$ be the primal subspace of functions, continuous across the interface, that will be subassembled between the subdomains that share $\Gamma^{(j)}$. We denote by \emph{dual}, the subspace \mbox{$W_\Delta^{(j)} \subset 	W_\Gamma^{(j)}$} which contains the finite element functions that can be discontinuous across the interface and which vanish at the primal degrees of freedom.
Let $W_\Pi$ and $W_\Delta$ be two subspaces such that 
\begin{equation*}
	W_\Pi = \prod_{j=1}^{N} W_\Pi^{(j)}, \qquad W_\Delta = \prod_{j=1}^{N} W_\Delta^{(j)}
\end{equation*}
and
$
W_\Gamma = W_\Pi \oplus W_\Delta.
$

With this notation, it is possible to decompose $\widetilde{W}_\Gamma$ into a primal subspace $\widehat{W}_\Pi$ which has continuous elements only and a dual subspace $W_\Delta$ which contains finite element functions which are not continuous, 
\begin{equation*}
	\widetilde{W}_\Gamma = \widehat{W}_\Pi \oplus W_\Delta,
\end{equation*}
and in the same fashion
\begin{equation*}
	\widehat{W} = \widehat{W}_\Pi \oplus \widehat{W}_\Delta \oplus W_I.
\end{equation*}

In this paper we will denote with subscripts $I$, $\Delta$ and $\Pi$ the interior, the dual and the primal variables respectively. \\

In the non-overlapping framework, the global system matrix Eq. (\ref{Jacobiancoupledsystem}) is never formed explicitly, but a local matrix with the same structure is assembled on each subdomain, by restricting the integration set and by defining the local bilinear forms 
\begin{align*}
	&a^{(j)} (s, \phi) = \chi C_m \innerprod{\si - \se}{\varphi_i - \varphi_e}_{|\Omega_j} + \innerprod{\sw}{\varphi_w}_{|\Omega_j} +  \tau a_i^{(j)} \left( \si, \varphi_i \right) + \tau a_e^{(j)} \left( \se, \varphi_e \right)\\
	& + \tau \innerprod{\sumcoupdidv \left( \sil - \sel \right) \psi_l}{\varphi_i - \varphi_e}_{|\Omega_j} 
	- \tau \innerprod{\sumcoupdRdw \swl \ \psi_l}{\varphi_w}_{|\Omega_j} \\
	&+ \tau \innerprod{\sumcoupdidw \swl \ \psi_l}{\varphi_i - \varphi_e}_{|\Omega_j} 
	- \tau \innerprod{\sumcoupdRdv \left( \sil - \sel \right) \psi_l}{\varphi_w}_{|\Omega_j} 
\end{align*}
and the symmetric and skew-symmetric counterparts
\begin{align*}
	b&^{(j)} (s, \phi) = 2 \chi C_m \innerprod{\si - \se}{\varphi_i - \varphi_e}_{\Omega_j}  + 2 \innerprod{\sw}{\varphi_w}_{\Omega_j}  +  2 \tau a_i^{(j)} \left( \si, \varphi_i \right) + 2 \tau a_e^{(j)} \left( \se, \varphi_i \right) \\
	& + 2 \tau \innerprod{\sumcoupdidv \left( \sil - \sel \right) \psi_l}{\varphi_i - \varphi_e}_{\Omega_j}  - 2 \tau \innerprod{\sumcoupdRdw \swl \ \psi_l}{\varphi_w}_{\Omega_j}  \\
	& + \tau \innerprod{\sumcoupdidw \left( \sil - \sel \right) \psi_l}{\varphi_w}_{\Omega_j}  +  \tau \innerprod{\sumcoupdidw \swl \ \psi_l}{\varphi_i - \varphi_e}_{\Omega_j}  \\
	& - \tau \innerprod{\sumcoupdRdv \left( \sil - \sel \right) \psi_l}{\varphi_w}_{\Omega_j}  -  \tau \innerprod{\sumcoupdRdv \swl \ \psi_l}{\varphi_i - \varphi_e}_{\Omega_j} \\
	z&^{(j)} (s, \phi) = \\ &\tau \innerprod{\sumcoupdidw \swl \ \psi_l}{\varphi_i - \varphi_e}_{\Omega_j}  - \tau \innerprod{\sumcoupdidw \left( \sil - \sel \right) \psi_l}{\varphi_w}_{\Omega_j}   \\
	& \tau \innerprod{\sumcoupdRdv \swl \ \psi_l}{\varphi_i - \varphi_e}_{\Omega_j} - \tau \innerprod{\sumcoupdRdv \left( \sil - \sel \right) \psi_l}{\varphi_w}_{\Omega_j}  
\end{align*}
where $\innerprod{\cdot\ }{\cdot}_{\Omega_j}$ denotes the restriction of the $L^2$-inner product to the $j$-th subdomain. 

As the proposed theory allows constant non-negative distribution of the diffusion coefficients among all subdomains, with large jumps aligned to the interfaces, these definitions are valid here.\\

Following the workflow in \cite{tu2008balancing}, after we introduce the space $\widetilde{W}$ of partially subassembled finite element space, we can define the corresponding bilinear forms by
\begin{equation*}
	\widetilde{a} (s, \phi) = \sumJN{a^{(j)} (s, \phi)}, 
	\qquad
	\widetilde{b} (s, \phi) = \sumJN{b^{(j)} (s, \phi)}, 
	\qquad
	\widetilde{z} (s, \phi) = \sumJN{z^{(j)} (s, \phi)}, 
\end{equation*}
We denote the partially subassembled matrices corresponding to the bilinear forms above with $\widetilde{A}$, $\widetilde{B}$ and $\widetilde{Z}$ respectively, and let
\begin{equation*}
	A = \widetilde{R}^T \widetilde{A} \widetilde{R},
	\qquad
	B = \widetilde{R}^T \widetilde{B} \widetilde{R},
	\qquad
	Z = \widetilde{R}^T \widetilde{Z} \widetilde{R},
\end{equation*}
being $\widetilde{R}$ the injection operator from $\widehat{W}$ to $\widetilde{W}$. 

In the same fashion as in \cite{tu2008balancing}, we define the truncated norms on the space $\widetilde{W}$
\begin{equation*}
	\norm{w}{\Omega}^2 = \sumJN{ \norm{w}{\Omega_j}^2 },
	\qquad
	\seminormHone{w}{\Omega}^2 = \sumJN{ \seminormHone{w}{\Omega_j}^2 },
	\qquad \qquad
	\forall w \in \widetilde{W}.
\end{equation*}
In this work $\norm{w}{\Omega}$ and $\seminormHone{w}{\Omega}$ for $w \in \widetilde{W}$ always represent these truncated norms.
As for construction the bilinear forms $b^{(j)} (\cdot, \cdot)$ for $j = 1, \dots, N$ are symmetric and positive definite on $W^{(j)}$, it is possible to define
\begin{equation*}
	\normBj{ u }^2 = b^{(j)} (u, u),
	\qquad
	\forall u \in W^{(j)}
\end{equation*}
and
\begin{equation*}
	\normB{u}^2 = \sumJN{ \normBj{u}^2 },
	\qquad
	\forall u \in \widehat{W},
	\qquad \qquad
	\normBtilde{u}^2 = \sumJN{ \normBj{u}^2 },
	\qquad
	\forall u \in \widetilde{W}.
\end{equation*}

In dual-primal methods, the reordering of the degrees of freedom, lead to consider a reordered system matrix: assuming that the system (\ref{Jacobiancoupledsystem}) can be written as $\mathcal{K} u = f$, then it is equivalent to write
\begin{equation*}
	\mathcal{K}^{(j)} = 
	\begin{bmatrix}
		K_{II}^{(j)} 					& K_{I\Gamma}^{(j)} \\
		K_{I \Gamma}^{(j) T}  	& K_{\Gamma \Gamma}^{(j)}
	\end{bmatrix},
	\qquad
	\mathcal{K} = 
	\begin{bmatrix}
		K_{II} 					& K_{I\Gamma} \\
		K_{I \Gamma} 	& K_{\Gamma \Gamma}
	\end{bmatrix},
\end{equation*}
where 
$
K_{II} = diag \left[ K_{II}^{(1)}, \dots, K_{II}^{(N)} \right]
$
is a block-diagonal matrix.
As in many iterative substructuring algorithms, we eliminate all the interior variables (step known as \emph{static condensation}), obtaining the local Schur complement $S_{\Gamma}^{(j)}$ on the $j$-th subdomain $\Omega_j$
\begin{equation*}
	S_{\Gamma}^{(j)} = K_{\Gamma \Gamma}^{(j)} - K_{I \Gamma}^{(j) T} K_{II}^{(j) -1}  	 K_{I\Gamma}^{(j)}.
\end{equation*}
By defining the block-diagonal matrix
$
S_\Gamma = diag \left[ S_\Gamma^{(1)}, \dots, S_\Gamma^{(N)} \right]
$, 
$
\widehat{S}_\Gamma = R_\Gamma^T S_\Gamma R_\Gamma^T,
$
and the quantities
$
g = \left[ g_I, \	\widehat{g}_\Gamma \right]^T,
$
$
\widehat{g}_\Gamma = R_\Gamma^T g_\Gamma 
$,
where $R_\Gamma$ is the direct sum of local restriction operators $R_\Gamma^{(j)}$ (which  returns the local interface components), $S_\Gamma$ the unassembled global Schur complement system and $g$ the right-hand side of the linear system, the resulting system which we need to solve is
\begin{equation}\label{schursys}
	\widehat{S}_\Gamma u_\Gamma = \widehat{f}_\Gamma, 
	\qquad
	\widehat{f}_\Gamma = \widehat{g}_\Gamma - K_{I \Gamma}^{ T} K_{II}^{ -1}  g_I.
\end{equation}
Once this problem is solved, it is possible to retrieve the solution on the internal degrees of freedom (dofs) by using $\uG$
\begin{equation*}
	u_I = K_{II}^{-1} \left( f_I  - K_{I \Gamma} u_\Gamma \right).
\end{equation*}
In the case of this application, the Schur complement system Eq. \ref{schursys} is non-symmetric, thus it is necessary to apply a solver for non-symmetric problems, such as the GMRES iterative algorithm. 
For any $\uG \in \widetilde{W}_\Gamma$, we define the harmonic extension to the interior of subdomains $\uag$ as
\begin{equation*}
	\uag = 
	\begin{bmatrix}
		-K_{II}^{-1} \widetilde{K}_{I \Gamma} \uG \\
		\uG 
	\end{bmatrix}
	\in \widetilde{W},
\end{equation*}
and analogously, it is possible to define its counterpart $\uag \in \widehat{W}$ for $\uG \in \widehat{W}_\Gamma$. 
%
%
%
%

We define the following bilinear forms for vectors in $\widehat{W}_\Gamma$ and $\widetilde{W}_\Gamma$ 
\begin{equation}\label{def32}
	\langle \uG, \vG \rangle_{B_\Gamma} = \vag^T B \uag, \quad 		\langle \uG, \vG \rangle_{Z_\Gamma} = \vag^T Z \uag,  \qquad \forall \uG, \vG \in \widehat{W}_\Gamma 
\end{equation}
\begin{equation} \label{def33}
	\langle \uG, \vG \rangle_{\widetilde{B}_\Gamma} = \vag^T \widetilde{B} \uag, \quad 		\langle \uG, \vG \rangle_{\widetilde{Z}_\Gamma} = \vag^T \widetilde{Z} \uag,  \qquad \forall \uG, \vG \in \widetilde{W}_\Gamma 
\end{equation}

We observe, as needed for further calculations, that \cite[Lemma 7.2]{tu2008balancing} follows from these definitions.

Following \cite{tu2008balancing}, it is useful to define $B_\Gamma$ and $\widetilde{B}_\Gamma$ norms:
\begin{equation*}
	\normBgamma{\uG}^2 = \langle \uG, \uG \rangle_{B_\Gamma} \	\text{ for }  \uG \in \widehat{W}_\Gamma 
	\qquad \text{and } \qquad
	\normBtildeGamma{\uG}^2 = \langle \uG, \uG \rangle_{\widetilde{B}_\Gamma} 	\	\text{ for } \uG \in \widetilde{W}_\Gamma	.
\end{equation*}

\subsection{Preconditioning}
As already mentioned, with the solution strategy described previously, the Schur complement matrix $\widehat{S}_\Gamma$ of the Jacobian Bidomain system (\ref{Jacobiancoupledsystem}) is non-symmetric but positive semidefinite: therefore it is necessary to apply a solver for non-symmetric systems, such as the Generalized Minimal Residual method (GMRES) \cite{saad1986gmres}.

Additionally, in order to enable fast convergence, preconditioning occurs. We hereby present theoretical results related to the Balancing Domain Decomposition with Constraints (BDDC) preconditioning algorithm \cite{dohrmann2016bddc}. 

When working with these methods, an interface averaging is needed: the standard scaling (\emph{$\rho$-scaling}) has weights built from the values of the elliptic coefficients in each substructure. On the contrary, the \emph{stiffness-scaling} takes its weights from the diagonal elements of both local and global stiffness matrix, while the more recent \emph{deluxe-scaling} (see \cite{dohrmann2016bddc, da2014isogeometric}) is based on the solution of local problems built from local Schur complements associated with the dual unknowns. 

As in our previous work \cite{huynh2021parallel}, we provide here a convergence rate estimate that holds both with the classic $\rho$-scaling and with the deluxe-scaling.

\subsubsection{Restriction operators and scaling. }
Before going into details of the proposed preconditioners, we define the restriction operators 
\begin{eqnarray}
	R_\Delta^{(j)}: W_\Delta \rightarrow W_\Delta^{(j)},  		&\quad R_{\Gamma \Delta}: W_\Gamma \rightarrow W_\Delta, 	 \nonumber \\
	R_\Pi^{(j)}: \widehat{W}_\Pi \rightarrow W_\Pi^{(j)},  	 	&\quad R_{\Gamma \Pi}: W_\Gamma \rightarrow \widehat{W}_\Pi, 	\nonumber
\end{eqnarray}
and the direct sums $R_\Delta = \oplus R_\Delta^{(j)}$, $R_\Pi = \oplus R_\Pi^{(j)} $ and $\widetilde{R}_\Gamma = R_{\Gamma \Pi} \oplus R_{\Gamma \Delta}$, which maps $W_\Gamma $ into $\widetilde{W}_\Gamma$. 
We also need a proper scaling of the dual variables.

The $\rho$-scaling, originally proposed for Neumann-Neumann methods, can be defined  for the coupled Bidomain model at each node $x \in \Gammaj$ as 
\begin{equation}\label{pseudoinv}
	\diej (x) = \dfrac{\sigma_M^{{i,e}^{(j)}}}{\sumNx{\sigma_M^{{i,e}^{(k)}}}}, 		\qquad 			\sigma_M^{{i,e}^{(j)}} = \max_{\bullet = \left\{ l, t, n \right\}} \sigma^{{i,e}^{(j)}}_\bullet,
	\qquad
	\dwj (x) = \dfrac{1}{|\mathcal{N}_x |},
\end{equation}
where $\mN_x$ is the set of indices of all subdomains with $x$ in the closure of the subdomain. We note that $\mN_x$  induces the definition of an equivalence relation that classifies the interface degrees of freedom into faces, edges and vertices equivalence classes. 

The deluxe scaling \cite{dohrmann2016bddc, da2014isogeometric} computes the average $\bar{w} = \Ed w$ for each face $\mathcal{F}$ or edge $\mathcal{E}$ equivalence class as follows:
suppose that $\mathcal{F}$ is shared by subdomains $\Omega_j$ and $\Omega_k$. Denote by $\SjF$ and $\SkF$ be the principal minors obtained from $\Sj_\Gamma$ and $\Sk_\Gamma$ by extracting all rows and columns related to the degrees of freedom of the face $\mathcal{F}$.  
Let $\uFj = \RF u_j$ be the restriction of $u_j$ to the face $\mathcal{F}$ through the restriction operator $\RF$. Then, the deluxe average across $\mathcal{F}$ can be defined as 
\begin{eqnarray}\nonumber
	\umeanF = \left( \SjF + \SkF \right)^{-1} \left( \SjF \uFj + \SkF \uFk \right).
\end{eqnarray}
The action of $ ( \SjF + \SkF )^{-1} $ can be computed by solving a Dirichlet problem over the two subdomains involved, with zero value on the right-hand side entries that correspond with the interior degrees of freedom. 

It is possible to extend this definition when considering the deluxe average across an edge $\mathcal{E}$. 
Suppose for simplicity that $\mathcal{E}$ is shared by only three subdomains with indices $j_1$, $j_2$ and $j_3$; the extension to more than three subdomains is straightforward. 
Denote by $\uEj = \RE u_j$ be the restriction of $u_j$ to the edge $\mathcal{E}$ through the restriction operator $\RE$ and define $\SjallE = \SjunoE + \SjdueE + \SjtreE$; the deluxe average across an edge $\mathcal{E}$ is given by
\begin{eqnarray}\nonumber
	\umeanE = \left( \SjallE  \right)^{-1} \left( \SjunoE \uEjuno + \SjdueE \uEjdue + \SjtreE \uEjtre \right).
\end{eqnarray}

The relevant equivalence classes, involving the substructure $\Omega_j$, will contribute to the values of $\bar{u}$. These contributions will belong to $\widehat{W}_\Gamma$, after being extended by zero to $\Gamma \backslash \mathcal{F}$ or $\Gamma \backslash \mathcal{E}$; the sum of all contributions will result in $R^T_\ast \bar{u}_\ast$. We then add the contributions from the different equivalence classes to obtain
\begin{eqnarray}\nonumber
	\bar{u} = \Ed u = u_\Pi + \sum_{\ast = \{ \mathcal{F}, \mathcal{E} \} } R^T_\ast \bar{u}_\ast,
\end{eqnarray}
where $\Ed$ is a projection. We define its complementary projection by
\begin{eqnarray}\label{PdDeluxe}
	\Pd u := (I - \Ed) u = u_\Delta - \sum_{\ast = \{ \mathcal{F}, \mathcal{E} \} } R^T_\ast \bar{u}_\ast.
\end{eqnarray}
We define the scaling matrix for each subdomain $\Omega_j$
\begin{equation}\label{deluxescaling}
	D^{(j)} =
	\begin{bmatrix}
		D^{(j)}_{\ast_{k_1}} 	&		&	\\
		& \ddots	&	\\
		&		&D^{(j)}_{\ast_{k_j}}
	\end{bmatrix},
	\qquad
	\ast = \left\{ \mathcal{F}, \mathcal{E} \right\}
\end{equation}
being $k_1, \dots, k_j \in \varXi_j^\ast$, a set containing the indices of the subdomains that share the face $\mathcal{F}$ or the edge $\mathcal{E}$ and where the diagonal blocks are given by $D^{(j)}_\mathcal{F} = ( \SjF + \SkF )^{-1} \SjF $ or $D^{(j)}_\mathcal{E}=  (\SjunoE + \SjdueE + \SjtreE)^{-1} \SjunoE$.

Lastly, we define the scaled local restriction operators
\begin{equation*}
	R_{D, \Gamma}^{(j)} = D^{(j)} R_\Gamma^{(j)}, 	\qquad 	\qquad R_{D, \Delta}^{(j)} = R_{\Gamma \Delta}^{(j)} R_{D, \Gamma}^{(j)} ,
\end{equation*}
$R_{D, \Delta}$ as direct sum of $R_{D, \Delta}^{(j)}$ and the global scaled operator 	$\widetilde{R}_{D, \Gamma} = R_{\Gamma \Pi} \oplus R_{D, \Delta} R_{\Gamma \Delta}$. \\

\subsubsection{BDDC preconditioner}
We recall that the Jacobian linear system (\ref{Jacobiancoupledsystem}) for the original non-linear reaction-diffusion problem has been reduced to the non-symmetric Schur complement system $\widehat{S}_\Gamma u_\Gamma = \widehat{f}_\Gamma$ for the subdomain interface variables. 
The interface problem is then solved with a preconditioned GMRES iteration. 

BDDC algorithms were initially proposed by \cite{dohrmann2016bddc} for the solution of symmetric, positive definite problems, but its formulation can be equally applied to non-symmetric problems, as done in \cite{tu2008balancing} for advection-diffusion equations. 

If we partition the degrees of freedom of the interface $\Gamma$ into those internal ($I$), those dual ($\Delta$) and those primal ($\Pi$), the matrix $\mathcal{K}^{(j)}$ from the problem $\mathcal{K} u = f$ can be written as
\begin{equation*}
	\mathcal{K}^{(j)} = 
	\begin{bmatrix}
		K_{II}^{(j)} 					& K_{I\Gamma}^{(j)} \\
		K_{I \Gamma}^{(j) T}  	& K_{\Gamma \Gamma}^{(j)}
	\end{bmatrix}
	=
	\begin{bmatrix}
		K_{II}^{(j)} 					& K_{I \Delta}^{(j)} 			   & K_{I \Pi}^{(j)} \\
		K_{I \Delta}^{(j) T}  		& K_{\Delta \Delta}^{(j)} 		& K_{\Delta \Pi}^{(j)}	\\
		K_{I \Pi}^{(j) T}			  & K_{\Delta \Pi}^{(j) T}			& K_{\Pi \Pi}^{(j)}
	\end{bmatrix}.
\end{equation*} 
We define the BDDC preconditioner using the restriction operators as 
\begin{equation}\label{BDDCoperators}
	M^{-1}_\text{BDDC} = \widetilde{R}_{D, \Gamma}^T \widetilde{S}_\Gamma^{-1}  \widetilde{R}_{D, \Gamma}, 	\qquad 		\widetilde{S}_\Gamma = \widetilde{R}_\Gamma S_\Gamma \widetilde{R}_\Gamma^T,
\end{equation}
where the action of the inverse of $\widehat{S}_\Gamma$ can be evaluated with a block-Cholesky elimination procedure
\begin{equation*}
	\widetilde{S}_\Gamma^{-1} = \widetilde{R}_{\Gamma \Delta }^T \left( \sum_{j=1}^{N} 
	\begin{bmatrix}
		0 		&R_\Delta^{(j) T}
	\end{bmatrix}
	\begin{bmatrix}
		K_{II}^{(j)} 					& K_{I \Delta}^{(j)} 			  \\
		K_{I \Delta}^{(j) T}  		& K_{\Delta \Delta}^{(j)}
	\end{bmatrix}^{-1}
	\begin{bmatrix}
		0 	\\		R_\Delta^{(j)}
	\end{bmatrix}
	\right) \widetilde{R}_{\Gamma \Delta } + \varPhi S_{\Pi \Pi}^{-1} \varPhi ,
\end{equation*}
where the first term is the sum of local solvers on each substructure $\Omega_j$, while the latter is a coarse solver for the primal variables where
\begin{equation*}
	\varPhi = R_{\Gamma \Pi}^T - R_{\Gamma \Delta}^T \sum_{j=1}^N 
	\begin{bmatrix}
		0 		&R_\Delta^{(j) T}
	\end{bmatrix}
	\begin{bmatrix}
		K_{II}^{(j)} 					& K_{I \Delta}^{(j)} 			  \\
		K_{I \Delta}^{(j) T}  		& K_{\Delta \Delta}^{(j)}
	\end{bmatrix}^{-1}
	\begin{bmatrix}
		K_{I \Pi}^{(j)} 	\\		R_{\Delta \Pi}^{(j)}
	\end{bmatrix}
	R_\Pi^{(j)} ,
\end{equation*}
\begin{equation*}
	S_{\Pi \Pi } = \sum_{j=1}^N  R_\Pi^{(j) T} 
	\left( K_{\Pi \Pi}^{(j)} - 
	\begin{bmatrix}
		K_{I \Pi}^{(j) T}		&K_{\Delta \Pi}^{(j) T}   
	\end{bmatrix}
	\begin{bmatrix}
		K_{II}^{(j)} 					& K_{I \Delta}^{(j)} 			  \\
		K_{I \Delta}^{(j) T} 		& K_{\Delta \Delta}^{(j)}
	\end{bmatrix}^{-1}
	\begin{bmatrix}
		K_{I \Pi}^{(j)} 	\\		R_{\Delta \Pi}^{(j)}
	\end{bmatrix}
	\right) R_\Pi^{(j)},
\end{equation*}
are the matrix which maps the primal degrees of freedom to the interface variables and the primal problem respectively. 


\section{Convergence rate estimate} \label{convergence coupled}
The key-point of the proof for the convergence rate estimate relies in this Lemma, which can be also proved for the $\rho$-scaling. 

\begin{lemma}\label{projectionlemma}
	Assume that the primal space is spanned by the vertex nodal finite element functions and the edge cutoff functions. Let the projection operator be scaled by either the standard $\rho$-scaling or the deluxe-scaling. Then 
	\begin{equation*}
		\normBtildeGamma{E_D u}^2 \lesssim  \left[ \max_{\substack{ k = 1, \dots, N \\ \star = i,e}} \dfrac{\tau \sigma_M^{\star (k)} + H^2 \left( \chi C_m + \tau K_{M,I} \right)}{\tau \sigma_m^{\star (k)}}  + \dfrac{1 - \tau K_{M,R} }{1 - \tau K_{m, R}} \right]  \left( 1 + \log \dfrac{H}{h} \right)^n \normBtildeGamma{u}^2,
	\end{equation*}
	holds $\forall u \in \widetilde{W}_\Gamma$, with $n = 2$ in case the $\rho$-scaling is applied, $n=3$ in case of the deluxe-scaling.
\end{lemma}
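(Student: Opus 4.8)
The plan is to reduce the estimate, component by component, to the standard logarithmic bounds for the BDDC averaging operator of scalar second-order elliptic problems, transferring all constants through the spectral equivalence of Lemma~\ref{ellipcoupbound}. Throughout I write $u = (u_i, u_e, w)$ for the three components of an interface datum in $\widetilde{W}_\Gamma$.

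First I would use the subdomain-additive structure. By the definitions (\ref{def33}) and the block-diagonal structure of $K_{II}$ and of $\widetilde{B}$, the restriction $(\uag)_{|\Omega_j}$ of the extension $\uag$ depends only on local data, and $\normBtildeGamma{u}^2 = \sum_{j=1}^N \normBtildeGammaj{u}^2$ with $\normBtildeGammaj{u}^2 = \normBj{(\uag)_{|\Omega_j}}^2$, and likewise for $E_D u$; moreover $E_D$ acts only through the face and edge equivalence classes, so on $\Omega_j$ the averaged function involves only $\Omega_j$ and the neighbours sharing a face or an edge with it. Bounding $\normBj{E_D u}^2$ from above by Lemma~\ref{ellipcoupbound} with the maximal coefficients, and $\normBj{u}^2$ from below with the minimal ones, it suffices to prove on each subdomain, for each of the four quadratic pieces
\begin{equation*}
	\norm{u_i-u_e}{\Omega_j}^2,\qquad \norm{w}{\Omega_j}^2,\qquad \seminormHone{u_i}{\Omega_j}^2,\qquad \seminormHone{u_e}{\Omega_j}^2,
\end{equation*}
a bound of the averaged quantity against the same piece of $u$ with a factor $(1+\log(H/h))^{n}$. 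The coefficient ratio in the statement is then exactly what this max/min sandwiching produces, combined with the Friedrichs inequality $\norm{v}{\Omega_j}^2 \lesssim H^2 \seminormHone{v}{\Omega_j}^2$ used to trade the two $L^2$-pieces for $H^1$-pieces on a subdomain of diameter $H$ (the term $H^2(\chi C_m + \tau K_{M,I})$ in the numerator), and with the scalar ratio $(1-\tau K_{M,R})/(1-\tau K_{m,R})$ for the $w$-component.

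For the two $H^1$-seminorm pieces, $E_D$ restricted to the $u_i$ (resp.\ $u_e$) component is precisely the BDDC averaging operator for a scalar Poisson problem. Since the primal space contains all vertex functions and all edge cutoff functions, the jump $(I-E_D)u$ vanishes at the vertices and has zero average on each edge, so the classical face and edge lemmas (see \cite{tu2008balancing} and the references therein) give the bound with $(1+\log(H/h))^2$ for the $\rho$-scaling. For the deluxe scaling the averages $\umeanF$ and $\umeanE$ are built from the principal minors of the full, coupled, non-symmetric Schur complements $\Sj_\Gamma$; here I would first replace those minors by the minors of their symmetric parts and then invoke, on a single face or edge, the spectral comparison between the deluxe and the $\rho$-scaled averages from \cite{dohrmann2016bddc, da2014isogeometric}, which in this coupled vector-valued setting costs one extra factor $(1+\log(H/h))$, hence $(1+\log(H/h))^3$.

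For the two $L^2$-pieces I would write $u - E_D u = \Pd u$; by (\ref{PdDeluxe}) this is an interface function that vanishes at the vertices and has zero edge averages, so the $L^2(\Omega_j)$-norm of its $b^{(j)}$-extension is controlled, through a trace-and-Friedrichs argument, by $H^2$ times its $H^{1/2}_{00}$-seminorms on the faces and edges of $\Omega_j$, to which the same face and edge lemmas apply; this reproduces the $H^2(\chi C_m + \tau K_{M,I})$ contribution and does not raise the logarithmic exponent. Summing over subdomains and over the (finitely many, for a fixed $\Omega_j$) equivalence classes yields the claim. The main obstacle I anticipate is precisely the deluxe case: one must show that passing from the coupled Schur-complement minors to their symmetric parts is harmless, and then quantify the exact extra logarithmic factor incurred because the deluxe average, which is tuned to $\widetilde{S}_\Gamma$, is here being measured in the $\widetilde{B}_\Gamma$-norm — this is the step where the coupling between $u_i$, $u_e$ and $w$ genuinely enters and where the exponent rises from $2$ to $3$.
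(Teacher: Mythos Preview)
Your reduction strategy is sound for the $\rho$-scaling, where the averaging acts diagonally on the three components and the decoupling into scalar face/edge lemmas goes through essentially as you describe. The deluxe case, however, contains a real gap, and your proposed resolution is not the one that works.

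The deluxe average $\umeanF$ is built from the principal minors $\SjF,\SkF$ of the full coupled (and non-symmetric) Schur complements, so $E_D$ restricted to the $u_i$-component is \emph{not} a scalar BDDC average and cannot be compared to the $\rho$-scaled one by a simple spectral argument. The paper does not attempt any such comparison. Instead it first passes from the $\widetilde{B}_\Gamma$-norm to the $\widetilde{S}_\Gamma$-seminorm via \cite[Lemma~7.2]{tu2008balancing}, so that the estimate is carried out in the very norm the deluxe scaling is adapted to. Then, on each face, it uses the algebraic inequalities
\[
\SkF(\SjF+\SkF)^{-1}\SjF(\SjF+\SkF)^{-1}\SkF \ \le\ \SjF,\qquad \le \SkF,
\]
coming from the generalized eigenvalue problem $\SjF\phi=\lambda\SkF\phi$, to bound $\seminormSjF{\uFj-\umeanF}^2$ by three pieces: two of the form $\seminormSjF{\uFj-\umeaniewFj}^2$ (with the shifted mean $\umeaniewFj=(\bar u^i_{j,\mathcal F},\bar u^e_{j,\mathcal F},0)$, i.e.\ zero gating component) and a cross term $\seminormSjF{(\SjF+\SkF)^{-1}\SkF(\umeaniewFj-\umeaniewFk)}^2$. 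The first two give $(1+\log H/h)^2$ by the ellipticity Lemma~\ref{ellipcoupbound}, Poincar\'e--Friedrichs and trace. The extra logarithm does \emph{not} come from comparing deluxe to $\rho$; it comes from the cross term, which is nonzero precisely because face averages are not primal. One rewrites $\umeaniewFj-\umeaniewFk$ through a common primal edge mean $\uiewmeanEj=\uiewmeanEk$ and bounds $\seminormSjF{\uiewmeanEj-\umeaniewFj}^2$ using \cite[Lemmas~4.26, 4.30]{toselli2006domain}; this edge-to-face mean transfer is what produces $(1+\log H/h)^3$. Your plan is missing both the $\widetilde{B}_\Gamma\leftrightarrow\widetilde{S}_\Gamma$ equivalence and this mean-subtraction mechanism, and the ``spectral comparison to $\rho$'' you propose would not yield a clean single extra logarithm.
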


\begin{proof}
	We report here a sketch of the proof for the deluxe scaling. 
	Two crucial points are the equivalence between the norms $\normBtildeGamma{\cdot}$ and $| \cdot |_{\widetilde{S}_\Gamma}$, for any $u \in \widetilde{W}_\Gamma$ (from \cite[Lemma $7.2$]{tu2008balancing}) and, as usual in the substructuring framework, estimating the local contributions for the complementary projection $\Pd$,
	\begin{equation*}
		\seminormSj{R_{\pO_j} \Pd u}^2 \leq | \varXi_j^\ast | \sum_{\substack{\ast = \{ \mathcal{F}, \mathcal{E} \}, \ \ast \in \varXi_j^\ast}} \seminormSj{ R_\ast^T \left( u^{i,e,w}_{j, \ast} - \bar{u}^{i,e,w}_{\ast}  \right)}^2   , 
	\end{equation*}
	where $\varXi_j^\ast$ is the index set containing the indices of the subdomains that share the face $\mathcal{F}$ or the edge $\mathcal{E}$. We will denote by 
	\begin{equation} \label{eq: def mean value}
		\bar{u}^{i,e,w}_{j, \mathcal{G}} = \left(  \bar{u}^i_{j, \mathcal{G}} , \bar{u}^e_{j, \mathcal{G}} , 0 \right)
	\end{equation}
	the vector containing the mean value of the intra- and extra-cellular potentials over $\mathcal{G} = \{ \mathcal{F}, \mathcal{E} \}$ on the subdomain $j$ and null value corresponding to the gating component. 
	Regarding the face contributions, by simple algebra, it is easy to bound the quantity $u_{j, \mathcal{F}} - \umeanF$ with
	\begin{equation*}
		2 \seminormSjF{u_{j, \mathcal{F}} - \umeaniewFj}^2 + 2 \seminormSkF{u_{k, \mathcal{F}} - \umeaniewFk}^2 + \seminormSjF{( \SjF + \SkF )^{-1} \SkF ( \umeaniewFj - \umeaniewFk )}^2,
	\end{equation*}
	$\forall u_{j, \mathcal{F}} \in \widetilde{W}_\Gamma$, by using the inequalities that arise from the generalized eigenvalue problem $\SjF \phi = \lambda \SkF \phi$ and by observing that all eigenvalues are strictly positive \cite{da2014isogeometric} 
	\begin{align*}
		\SkF ( \SjF + \SkF )^{-1}  \SjF ( \SjF + \SkF )^{-1} \SkF  &\leq \SjF \\
		\SkF ( \SjF + \SkF )^{-1}  \SjF ( \SjF + \SkF )^{-1} \SkF  &\leq \SkF.
	\end{align*}
	It is sufficient to estimate $\seminormSjF{u_{j, \mathcal{F}} - \umeaniewFj}^2$ and $\seminormSjF{ ( \SjF + \SkF )^{-1} \SkF ( \umeaniewFj - \umeaniewFk )}^2$; we highlight that, in case also the subdomain faces averages are included in the primal space, the latter is zero. 
	The first term can be bounded by applying the ellipticity Lemma \ref{ellipcoupbound}, the Poincarè-Friedrichs inequality and the Trace theorem, by
	\begin{equation*}
		\left[ \tau \sigma_M^{i,e} + H^2 \left( \chi C_m + \tau K_{M,I} \right) \right] \left( 1 + \log \dfrac{H}{h} \right)^2 \seminormHone{ \harmextie{j}{u_j} }{\Omega_j}^2 + \left( 1 - K_{M,R} \right)  \norm{  u_j^w }{\Omega_j}^2,
	\end{equation*}
	where $\harmextie{j}{u_j}$ is the discrete Laplacian extension operator. 
	Regarding the second term $\seminormSjF{ ( \SjF + \SkF )^{-1} \SkF ( \umeaniewFj - \umeaniewFk )}^2$, let $\mathcal{E} \subset \partial \mathcal{F}$ be a primal edge, such that $\uiewmeanEj = \uiewmeanEk$. Then, it is straightforward to see that 
	\begin{equation*}
		\seminormSjF{\left( \SjF + \SkF \right)^{-1} \SkF \left( \umeaniewFj - \umeaniewFk \right)}^2 \leq 2 \ \seminormSjF{  \uiewmeanEj - \umeaniewFj  }^2 + 2 \ \seminormSkF{ \uiewmeanEk - \umeaniewFk }.
	\end{equation*}
	Combining Lemma $7.2$ from \cite{tu2008balancing}, the result of ellipticity Lemma (\ref{ellipcoupbound}), the Poincarè-Friedrichs inequality and the Trace theorem, we get
	\begin{align*}
		\seminormSjF{  \uiewmeanEj - &\umeaniewFj  }^2 \lesssim  \sum_{\star = i,e} \left[ \tau \sigma^{\star}_M + H^2 \left( \chi C_m + \tau K_M \right) \right] \seminormHone{\overline{\left( u_j^{i,e} - \umeanieFj \right) }_{j, \mathcal{E}} }{\Omega_j}^2  \\
		&\leq C \left( 1 + \log \dfrac{H}{h} \right)^3 \sum_{\star = i,e} \left[ \tau \sigma^{\star}_M + H^2 \left( \chi C_m + \tau K_{M,I} \right) \right]  \seminormHone{\harmextie{j}{u_j}}{\Omega_j}^2 ,
	\end{align*}
	as the mean value of the gating component vanishes by construction Eq. \ref{eq: def mean value} and the norm related to the potentials can be obtained with results from Ref. \cite[Chapter 4, Lemmas 4.26 and 4.30]{toselli2006domain}. 
	To conclude, the face contribution for the bound of $\seminormSj{P_D u}^2$ is given by
	\begin{equation*}
		\sum_{\mathcal{F} \in \varXi_j^{\mathcal{F}}} \left[ \max_{\star = i,e}  \dfrac{ \tau \sigma_M^{\star} + H^2 \left( \chi C_m + \tau K_{M,I}  \right) }{\tau \sigma_m^{\star}} +  \dfrac{1 - \tau K_{M,R} }{1 - \tau K_{m, R} } \right] \left( 1 + \log \dfrac{H}{h} \right)^3 \normBtildeGammaj{u_j}^2.
	\end{equation*}	
	In a similar manner, the edge contribution can be obtained with
	\begin{equation*}
		\uEjuno - \umeanE = \left( \SjallE \right)^{-1} \left[ \left( \SjdueE + \SjtreE \right) \uEjuno - \SjdueE \uEjdue - \SjtreE \uEjtre \right],
	\end{equation*}
	where for simplicity, by supposing that an edge $\mathcal{E}$ is shared only by three substructures, each with indexes $j_1$, $j_2$ and $j_3$ (the extension to the case of more subdomains is then similar), we define 
	$
	\SjallE := \SjunoE + \SjdueE + \SjtreE
	$
	and the average operator as
	$
	\umeanE := ( \SjallE )^{-1} ( \SjunoE \uEjuno + \SjdueE \uEjdue + \SjtreE \uEjtre ).
	$
	\newline
	Proceeding in the same fashion as for the face contribution, it follows
	\begin{equation*}
		\seminormSjuno{\RE^T \left(\uEjuno - \umeanE \right)}^2 \leq 3 \uEjuno^T \SjunoE \uEjuno \ + 3 \uEjdue^T \SjdueE \uEjdue \ + \uEjtre^T  \SjtreE \uEjtre 
	\end{equation*} 
	By adding and subtracting $\uiewmeanEjuno$ (which assume the same value over the three subdomain, as we have included the edge averages into the primal space) we can get the counterpart estimate for the egdes:
	\begin{align*}
		\uEjuno^T \SjunoE \uEjuno &\leq \left[ \tau \sigma_M^{i,e} + H^2 \left( \chi C_m + \tau K_{M,I}  \right) \right] \left( 1 + \log \dfrac{H}{h} \right) \seminormHone{\harmextie{j_1}{\uEjuno}}{\Omega_j}^2\\
		&\qquad +  \left( 1 - \tau K_{M,R} \right) \norm{\uEjuno^w  }{\Omega_j}^2 
	\end{align*}
	In conclusion, the edge contribution for the estimate of $\seminormSj{P_D u}^2$ is given by
	\begin{equation*}
		\sum_{\mathcal{E} \in \varXi_j^\mathcal{E}}  \left[ \max_{\star = i,e} \dfrac{ \tau \sigma_M^{i,e} + H^2 \left( \chi C_m + \tau K_{M,I}  \right) }{\tau \sigma_m^\star} + \dfrac{1 - \tau K_{M, R}}{1 - \tau K_{m, R}} \right] \left( 1 + \log \dfrac{H}{h} \right) \normBtildeGammaj{u_{j}}^2.
	\end{equation*}
	where the index $j$ collects all contributions from the subdomains that share edge $\mathcal{E}$. 
	\qed
\end{proof}

The convergence rate of the preconditioned GMRES iteration can be obtained using the result in \cite{eisenstat1983variational} and following the proof techniques proposed in \cite{tu2008balancing}.

\begin{theorem}\label{coupledconvergence}
	Let $H$ be the subdomain size and let the mesh size $h$ be small enough. \\ Assume, for $u \in \widehat{W}_\Gamma$, that there exists two positive constants $c$ and $C$ such that
	\begin{equation*}
		c \langle u, u \rangle_{B_\Gamma} \leq \langle u, Tu \rangle_{B_\Gamma}	,
		\qquad
		\langle Tu, Tu \rangle_{B_\Gamma} \leq C \langle u, u \rangle_{B_\Gamma}
	\end{equation*}
	hold, with $c = \dfrac{c_0}{K^2} $ and $C = \Phi^{\star, k} (H,h) \ K^2 $, where
	\begin{equation*}
		\begin{aligned}
			c_0 &= 1 - K^4 \ \dfrac{H^2}{h} \max_{\substack{ k = 1,\dots,N \\ \star = i,e}} \dfrac{\left[ \sigma_M^{\star (k)} \right]^{\frac{1}{2}} }{\sqrt{\tau} \sigma_m^{\star (k)}} \Phi^{\star, k} (H,h) \left[ \Phi^{\star, k} (H,h)  - 1 \right]^{\frac{1}{2}} , \\
			\Phi^{\star, k} (H,h) &= \left[ \max_{\substack{ k = 1, \dots, N \\ \star = i,e}} \dfrac{\tau \sigma_M^{\star (k)} + H^2 \left( \chi C_m + \tau K_{M,I} \right)}{\tau \sigma_m^{\star (k)}}  + \dfrac{1 - \tau K_{M,R} }{1 - \tau K_{m, R}} \right]  \left( 1 + \log \dfrac{H}{h} \right)^n  ,\\ 
			K^2 &= \dfrac{1}{4} \dfrac{\tau^2 \ | C_{I_w} - C_{R_v}|^2}{\left( \chi C_m + \tau K_{m, I} \right) \left( 1 - \tau K_{m,R} \right)},
		\end{aligned}
	\end{equation*}
	where $n=2,3$ depends on a standard $\rho$-scaling or deluxe scaling procedure and $T$ is the preconditioned operator
	$
	T = M^{-1}_{BDDC} \widetilde{S}_\Gamma =  \widetilde{R}_{D, \Gamma}^T \widetilde{S}_\Gamma^{-1}  \widetilde{R}_{D, \Gamma} \widetilde{R}_\Gamma S_\Gamma \widetilde{R}_\Gamma^T .
	$
	Then
	\begin{equation*}
		\dfrac{\normBgamma{r_m} }{\normBgamma{r_0}} \leq \left( 1 - \dfrac{c^2}{C} \right)^\frac{m}{2} ,
	\end{equation*}
	where $r_m$ is the residual at the $m$-th iteration.
\end{theorem}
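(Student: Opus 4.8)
The plan is to treat the two displayed inequalities not as something to be derived here but as the ready-made hypotheses of the abstract GMRES convergence result of Eisenstat, Elman and Schultz \cite{eisenstat1983variational}, transported to the inner product $\langle \cdot, \cdot \rangle_{B_\Gamma}$, and to let the residual contraction follow as essentially a one-line application. The explicit expressions for $c = c_0/K^2$ and $C = \Phi^{\star,k}(H,h)\,K^2$ are precisely those furnished upstream --- the factor $\Phi^{\star,k}(H,h)$ by the projection estimate of Lemma \ref{projectionlemma}, the factor $K^2$ by the coercivity Lemma \ref{ellipcoupbound} applied to the symmetric part together with the bound on the skew-symmetric part $Z$ --- so the present proof need only confirm that the ESE machinery is applicable in this weighted, non-symmetric setting and then substitute.

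First I would record that $\langle \cdot, \cdot \rangle_{B_\Gamma}$ is a genuine inner product on $\widehat{W}_\Gamma$. By the coercivity half of Lemma \ref{symbilformcontinuity} the matrix $B$ is symmetric positive definite, hence the bilinear form defined in (\ref{def32}) is symmetric and positive definite, and $\normBgamma{\cdot}$ is the associated norm --- precisely the norm in which the residuals $r_m$ are measured and in which the preconditioned GMRES iteration is formulated, following \cite{tu2008balancing}.

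Next I would state the ESE estimate in this inner product: for a linear operator $T$ on $\widehat{W}_\Gamma$ that is coercive and bounded with respect to $\langle \cdot, \cdot \rangle_{B_\Gamma}$, with constants $c_p, C_p$ satisfying $c_p \langle u,u \rangle_{B_\Gamma} \le \langle u, Tu \rangle_{B_\Gamma}$ and $\normBgamma{Tu} \le C_p \normBgamma{u}$ for all $u$, the GMRES residuals obey $\normBgamma{r_m}/\normBgamma{r_0} \le (1 - c_p^2/C_p^2)^{m/2}$. Matching constants is then immediate: the first assumed inequality is coercivity with $c_p = c$, while the second reads $\normBgamma{Tu}^2 = \langle Tu, Tu \rangle_{B_\Gamma} \le C \langle u,u \rangle_{B_\Gamma} = C \normBgamma{u}^2$, i.e. boundedness with $C_p = \sqrt{C}$. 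Hence $c_p^2/C_p^2 = c^2/C$, and substitution yields $\normBgamma{r_m}/\normBgamma{r_0} \le (1 - c^2/C)^{m/2}$, which is exactly the claim.

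The only real obstacle is justifying the ESE estimate in the weighted $B_\Gamma$ inner product rather than the Euclidean one, together with measuring the residual in $\normBgamma{\cdot}$: one must verify that the GMRES variant employed here minimizes the $B_\Gamma$-norm of the residual over the Krylov subspace, so that the minimization argument underlying the ESE bound transfers verbatim. Since $B$ is symmetric positive definite, this is the standard change-of-inner-product reduction, and it is exactly the device used in \cite{tu2008balancing}; I would invoke it to close the argument. I would also note that the hypothesis $c>0$ (equivalently $c_0>0$, which requires $h$ small enough) is what forces the contraction factor $1 - c^2/C$ strictly below $1$, so that the estimate is genuinely contractive.
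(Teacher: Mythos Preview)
Your reading of the theorem as a purely conditional statement --- assume the two inequalities, conclude the residual contraction via Eisenstat--Elman--Schultz --- is defensible from the awkward wording, but it is not what the paper proves. In the paper the substantive content of Theorem \ref{coupledconvergence} \emph{is} the derivation of the coercivity and boundedness estimates with the stated constants; the sections immediately following the theorem are titled ``Proof of the upper bound'' and ``Proof of the lower bound'' and constitute the proof. The final ESE substitution you carry out is correct but is the one-line coda, not the argument.

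Concretely, the constants are not ``furnished upstream.'' Lemma \ref{projectionlemma} and Lemma \ref{ellipcoupbound} are ingredients, but they do not by themselves give either bound for the preconditioned operator $T$. For the upper bound the paper sets $w_\Gamma = \widetilde{S}_\Gamma^{-1}\widetilde{R}_{D,\Gamma} S_\Gamma u_\Gamma$, rewrites $\langle Tu_\Gamma, Tu_\Gamma\rangle_{B_\Gamma}$ as $|E_D w_\Gamma|_{\widetilde{B}_\Gamma}^2$ via \cite[Lemmas 7.2, 7.9]{tu2008balancing}, applies the projection estimate of Lemma \ref{projectionlemma} to produce the $\Phi^{\star,k}(H,h)$ factor, and then uses Lemma \ref{lemma7.3} (continuity of $\langle\cdot,\cdot\rangle_{\widetilde{S}_\Gamma}$ in the $\widetilde{B}_\Gamma$-norm with constant $K$) to close. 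The lower bound is harder: it requires a chain of auxiliary results (Lemmas \ref{lemma7.4}--\ref{lemma7.14}) that control $\|w_{\mathcal{A},\Gamma} - u_{\mathcal{A},\Gamma}\|_{L^2}$ and $\|v_{\mathcal{A},\Gamma}\|_{L^2}$ for $v_\Gamma = Tu_\Gamma - u_\Gamma$ via Poincar\'e--Friedrichs and an inverse inequality, and then bounds the skew term $\langle u_\Gamma, Tu_\Gamma - u_\Gamma\rangle_{Z_\Gamma}$ to extract the constant $c_0$. None of this is in your proposal, so as a proof of the theorem in the sense the paper intends, it is incomplete: you have identified only the trivial closing step.
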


\subsection{Proof of the upper bound} 
For the proof of the upper bound $\langle Tu, Tu \rangle_{B_\Gamma} \leq C \langle u, u \rangle_{B_\Gamma}$, we need the following results.

\begin{lemma} \label{lemma4.3}		
	There exists a constant $C_1 >0$ such that $\forall u_j, v_j \in W^{(j)}$ with $j=1,\dots,N$,
	\begin{equation*}
		| z^{(j)} \left( u_j, v_j \right) | \leq C_1 \ K | u_j |_{B^{(j)}} | v_j |_{B^{(j)}},
		\qquad
		| a^{(j)} \left( u_j, v_j \right) | \leq C_1 \ K | u_j |_{B^{(j)}} | v_j |_{B^{(j)}},
	\end{equation*}
	where
	\begin{equation*}
		K^2 = \dfrac{1}{4} \dfrac{\tau^2 \ | C_{I_w} - C_{R_v}|^2}{\left( \chi C_m + \tau K_{m, I} \right) \left( 1 - \tau K_{m,R} \right)}.
	\end{equation*}
\end{lemma}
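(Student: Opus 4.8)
The plan is to obtain both estimates from two ingredients: the Cauchy--Schwarz inequality applied to the $L^2(\Omega_j)$ inner products appearing in $z^{(j)}$ and $a^{(j)}$, and the subdomain version of the coercivity bound of Lemma~\ref{ellipcoupbound}. Since the conductivity coefficients are constant on each $\Omega_j$, that lemma applies verbatim to $b^{(j)}(u_j,u_j)$, so for every $u_j=(u_{j,i},u_{j,e},u_{j,w})\in W^{(j)}$, and likewise for $v_j$,
\begin{equation*}
	\norm{u_{j,i}-u_{j,e}}{\Omega_j}^2 \le \frac{\normBj{u_j}^2}{2\left(\chi C_m+\tau K_{m,I}\right)}, \qquad \norm{u_{j,w}}{\Omega_j}^2 \le \frac{\normBj{u_j}^2}{2\left(1-\tau K_{m,R}\right)} .
\end{equation*}

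First I would treat the skew--symmetric form. Expanding $z^{(j)}(u_j,v_j)$ with the matrix $Z$, every contribution has the form $\pm\tau\innerprod{\sum_l\bigl(\tfrac{\partial\Ion}{\partial w_l}+\tfrac{\partial R}{\partial v_l}\bigr)g_l\,\psi_l}{h}_{\Omega_j}$, where the pair $(g,h)$ is either $(u_{j,w},\,v_{j,i}-v_{j,e})$ or $(u_{j,i}-u_{j,e},\,v_{j,w})$, with $\psi_l$ the nodal basis functions. Bounding the ionic Jacobian entries pointwise, $\tau\,|\partial\Ion/\partial w_l+\partial R/\partial v_l|\le \tau\,|C_{I_w}-C_{R_v}|$, and using the $L^2$ stability of the nodal (mass--lumping) interpolation operator $\interpol{\cdot}$ on $Q_1$ elements, whose constant is independent of $h$ and $H$, the Cauchy--Schwarz inequality gives
\begin{equation*}
	|z^{(j)}(u_j,v_j)| \lesssim \tau\,|C_{I_w}-C_{R_v}|\left( \norm{u_{j,w}}{\Omega_j}\norm{v_{j,i}-v_{j,e}}{\Omega_j} + \norm{u_{j,i}-u_{j,e}}{\Omega_j}\norm{v_{j,w}}{\Omega_j}\right).
\end{equation*}
Inserting the two coercivity estimates, each product is bounded by $\big[4\left(\chi C_m+\tau K_{m,I}\right)\left(1-\tau K_{m,R}\right)\big]^{-1/2}\normBj{u_j}\normBj{v_j}$, so that
\begin{equation*}
	|z^{(j)}(u_j,v_j)| \lesssim \frac{\tau\,|C_{I_w}-C_{R_v}|}{\sqrt{\left(\chi C_m+\tau K_{m,I}\right)\left(1-\tau K_{m,R}\right)}}\,\normBj{u_j}\normBj{v_j} = 2K\,\normBj{u_j}\normBj{v_j},
\end{equation*}
which is the first claim, with $C_1$ absorbing the interpolation constant and the factor $2$.

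For the second bound I would write $a^{(j)}=\tfrac12\bigl(b^{(j)}+z^{(j)}\bigr)$: since $b^{(j)}$ is symmetric positive definite on $W^{(j)}$, Cauchy--Schwarz for the inner product $b^{(j)}(\cdot,\cdot)$ gives $|b^{(j)}(u_j,v_j)|\le\normBj{u_j}\normBj{v_j}$, and together with the estimate just proved this yields $|a^{(j)}(u_j,v_j)|\le\tfrac12(1+C_1K)\normBj{u_j}\normBj{v_j}$, i.e. continuity of $a^{(j)}$ with respect to $\normBj{\cdot}$, from which the stated bound follows after adjusting the constant. The delicate point is the first step: one has to (i) keep the ionic--current interpolation under control so that no hidden $h$-- or $H$--dependence is introduced, and (ii) organise the Cauchy--Schwarz splitting so that exactly the quantities $u_{j,i}-u_{j,e}$ and $u_{j,w}$ that are dominated by $\normBj{\cdot}$ occupy the two slots of each product; this pairing is what makes the denominators $\chi C_m+\tau K_{m,I}$ and $1-\tau K_{m,R}$ combine to produce the exact constant $K$, including its factor $\tfrac14$.
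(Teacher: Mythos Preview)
Your proposal is correct and follows essentially the same route as the paper: bound $\norm{u_{j,i}-u_{j,e}}{\Omega_j}$ and $\norm{u_{j,w}}{\Omega_j}$ from above via the local coercivity estimate of Lemma~\ref{ellipcoupbound}, apply Cauchy--Schwarz in $L^2(\Omega_j)$ to the two cross terms of $z^{(j)}$, and then handle $a^{(j)}$ by the splitting $a^{(j)}=\tfrac12(b^{(j)}+z^{(j)})$ together with the Cauchy--Schwarz inequality for the inner product $b^{(j)}(\cdot,\cdot)$. The only cosmetic differences are that you make the $L^2$-stability of the nodal interpolation explicit and track the factor $2$ in front of $K$ before absorbing it into $C_1$, whereas the paper leaves both implicit.
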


\begin{proof}
	Thanks to the definition of the $B^{(j)}$-norm and by using the ellipticity Lemma \ref{ellipcoupbound}, we can bound from below the norm 
	\begin{align*}
		| u_j |_{B^{(j)}}^2 
		&\geq 2 \left[ \left( \chi C_m + \tau K_{m,I} \right) \norm{u^i_j - u^e_j}{\Omega_j}^2	
		+ \ \left( 1 - \tau K_{m,R} \right) \norm{u^w_j}{\Omega_j}^2 \right].
	\end{align*}
	Therefore, $\forall u_j, v_j \in W^{(j)}$
	\begin{align*}
		| u_j |_{B^{(j)}}^2 | v_j |^2_{B^{(j)}} &\geq 4 \left( \chi C_m + \tau K_{m,I} \right) \left( 1 - \tau K_{m,R} \right) \times \\ & \qquad \qquad \left[ \norm{u^i_j - u^e_j}{\Omega_j}^2 \norm{v^w_j}{\Omega_j}^2 + \norm{u^w_j}{\Omega_j}^2 \norm{v^i_j - v^e_j}{\Omega_j}^2 \right],
	\end{align*}
	from which
	\begin{equation*}
		\begin{aligned}
			| u_j |_{B^{(j)}} | v_j |_{B^{(j)}} &\geq 2 \left( \chi C_m + \tau K_{m,I} \right)^\frac{1}{2} \left( 1 - \tau K_{m,R} \right)^\frac{1}{2} \times \\ &\qquad \qquad  \sqrt{\norm{u^i_j - u^e_j}{\Omega_j}^2 \norm{v^w_j}{\Omega_j}^2 + \norm{u^w_j}{\Omega_j}^2 \norm{v^i_j - v^e_j}{\Omega_j}^2 }.
		\end{aligned}
	\end{equation*}
	We can estimate the bound for the skew-symmetric bilinear form
	\begin{equation*}
		\begin{aligned}
			| z^{(j)} \left( u_j, v_j \right) | &\leq \tau | C_{I_w} - C_{R_v} | \times \\ &\qquad \qquad    | \norm{u^i_j - u^e_j}{\Omega_j} \norm{v^w_j}{\Omega_j} + \norm{u^w_j}{\Omega_j} \norm{v^i_j - v^e_j}{\Omega_j} | \\
			&\leq \dfrac{1}{2} \dfrac{\tau | C_{I_w} - C_{R_v} |}{\left( \chi C_m + \tau K_{m,I} \right)^\frac{1}{2} \left( 1 - \tau K_{m,R} \right)^\frac{1}{2} } |  u_j |_{B^{(j)}} | v_j |_{B^{(j)}}.
		\end{aligned}
	\end{equation*}
	The bound for the bilinear form $a (\cdot, \cdot)$ follows easily from its decomposition
	\begin{equation*}
		a^{(j)} \left( u_j , v_j \right) = \dfrac{1}{2} b^{(j)} \left( u_j , v_j \right) + \dfrac{1}{2} z^{(j)} \left( u_j , v_j \right) 
	\end{equation*}
	and from the continuity of both symmetric and skew-symmetric forms.
	\qed
\end{proof}


\begin{lemma}\label{lemma4.4}			
	There exists a constant $C_2 > 0 $ such that $\forall u, v \in \widehat{W}$ 
	\begin{equation*}
		| z(u,v) | \leq C_2 \ K |u|_B \norm{v}{\Omega},
	\end{equation*}
	with $K$ defined in Lemma \ref{lemma4.3}.
\end{lemma}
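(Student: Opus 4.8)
The plan is to exploit the fact that the skew-symmetric form $z(\cdot,\cdot)$ is, in its second argument, of lower order than $a$ or $b$: inspection of its definition shows that $z(u,v)$ couples the components of $u$ and of $v$ only through the $L^2(\Omega)$ pairings $\langle\,\cdot\,,\varphi_i-\varphi_e\rangle$ and $\langle\,\cdot\,,\varphi_w\rangle$, never through the diffusion forms $a_i^{(j)},a_e^{(j)}$. Hence the estimate can be made with $\norm{v}{\Omega}$ in place of $|v|_B$, and the argument is a one-sided variant of Lemma~\ref{lemma4.3}. First I would expand $z(u,v)$ from its definition, group the $\frac{\partial\Ion}{\partial w_l}$ and $\frac{\partial R}{\partial v_l}$ contributions, and apply the triangle inequality to reduce to the two scalar quantities $\langle\sum_l c_l(u^i_l-u^e_l)\psi_l,\,v^w\rangle$ and $\langle\sum_l c_l\,u^w_l\,\psi_l,\,v^i-v^e\rangle$, where $c_l$ denotes the nodal combination of coupling derivatives whose modulus is bounded by $|C_{I_w}-C_{R_v}|$, exactly as used in Lemma~\ref{lemma4.3} (the derivatives being evaluated at the frozen Newton iterate, hence uniformly bounded functions), and where $u^i_l,u^e_l,u^w_l$ are nodal coefficients so that $\sum_l(u^i_l-u^e_l)\psi_l=u^i-u^e$ and $\sum_l u^w_l\psi_l=u^w$.

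For each of the two quantities I would apply Cauchy--Schwarz in $L^2(\Omega)$, after noting that stripping the bounded weights $c_l$ out of the interpolation operator, i.e. passing from $\|\sum_l c_l d_l\psi_l\|_{L^2}$ to $|C_{I_w}-C_{R_v}|\,\|\sum_l d_l\psi_l\|_{L^2}$, costs only a mesh-independent constant by the standard equivalence of the consistent $Q_1$ mass matrix with its lumped counterpart. This yields
\[
|z(u,v)| \;\le\; C\,\tau\,|C_{I_w}-C_{R_v}|\,\bigl(\norm{u^i-u^e}{\Omega}\,\norm{v^w}{\Omega}+\norm{u^w}{\Omega}\,\norm{v^i-v^e}{\Omega}\bigr).
\]
On the $v$-side I bound trivially $\norm{v^w}{\Omega}\le\norm{v}{\Omega}$ and $\norm{v^i-v^e}{\Omega}\le\norm{v^i}{\Omega}+\norm{v^e}{\Omega}\le\sqrt{2}\,\norm{v}{\Omega}$; on the $u$-side I use the lower bound of the ellipticity Lemma~\ref{ellipcoupbound}, which, since all four terms in it are nonnegative, gives simultaneously $\norm{u^i-u^e}{\Omega}\le |u|_B/\sqrt{2(\chi C_m+\tau K_{m,I})}$ and $\norm{u^w}{\Omega}\le |u|_B/\sqrt{2(1-\tau K_{m,R})}$.

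Collecting these bounds, the prefactor becomes $C\,\tau\,|C_{I_w}-C_{R_v}|\bigl((\chi C_m+\tau K_{m,I})^{-1/2}+(1-\tau K_{m,R})^{-1/2}\bigr)$, and since
\[
K=\tfrac12\,\frac{\tau\,|C_{I_w}-C_{R_v}|}{\sqrt{(\chi C_m+\tau K_{m,I})(1-\tau K_{m,R})}},
\]
this equals $2C\,K\bigl(\sqrt{1-\tau K_{m,R}}+\sqrt{\chi C_m+\tau K_{m,I}}\bigr)$, which is $\le C_2\,K$ with $C_2$ independent of $H$ and $h$, because the physiological parameters together with the (small) admissible range of $\tau$ keep $\chi C_m+\tau K_{m,I}$ and $1-\tau K_{m,R}$ bounded above, the latter also bounded away from $0$ by the standing assumption of Lemma~\ref{symbilformcontinuity}. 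I do not anticipate a real obstacle: the essential content is the observation that $z$ does not see $\nabla v$; the only delicate bookkeeping is the mass-matrix/mass-lumping equivalence needed to extract the bounded coupling coefficients from the nodal interpolation, and checking that those coefficients — for the Roger--McCulloch model, $\partial R/\partial v=\eta_2/v_p$ constant and $\partial\Ion/\partial w=\eta_1 v$ bounded on the operating range already assumed — are indeed uniformly bounded.
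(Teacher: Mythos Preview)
Your argument is correct and is precisely the natural one-sided variant of the proof of Lemma~\ref{lemma4.3}: exploit that $z(\cdot,\cdot)$ involves only $L^2$ pairings in its second slot, apply Cauchy--Schwarz to get the product $\norm{u^i-u^e}{\Omega}\norm{v^w}{\Omega}+\norm{u^w}{\Omega}\norm{v^i-v^e}{\Omega}$, bound the $u$-factors from above via the lower bound of Lemma~\ref{ellipcoupbound}, and bound the $v$-factors trivially by $\norm{v}{\Omega}$. The paper in fact states Lemma~\ref{lemma4.4} without proof, so there is nothing to compare against; your write-up fills the gap in the expected way, and the residual factor $\sqrt{1-\tau K_{m,R}}+\sqrt{\chi C_m+\tau K_{m,I}}$ is indeed independent of $H$ and $h$ and may be absorbed into $C_2$.
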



\begin{lemma}\label{lemma7.3} 			
	Let $C_3, C_4 > 0 $ be two positive constants, independent from $H$ and $h$, such that $\forall \uG, \vG \in \widetilde{W}_\Gamma$ 
	\begin{align*} 
		\text{(i) } \qquad | \langle \uG, \vG \rangle_{\widetilde{Z}_\Gamma} | \leq C_3 \ K \ \normBtildeGamma{\uG} \normBtildeGamma{\vG} 	\nonumber \\
		\text{(ii) } \qquad | \langle \uG, \vG \rangle_{\widetilde{S}_\Gamma} | \leq C_4 \ K \ \normBtildeGamma{\uG} \normBtildeGamma{\vG} 	\nonumber \\
	\end{align*}
	where $K$ is defined in Lemma \ref{lemma4.3}.
\end{lemma}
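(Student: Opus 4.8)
\subsection*{Proof plan}

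The plan is to reduce both estimates to their per-subdomain (volume) analogues, which are precisely the two bounds of Lemma~\ref{lemma4.3}, and then to glue the local bounds by a discrete Cauchy--Schwarz inequality, so that no trace or extension theorem — and hence no $(1+\log(H/h))$ factor — is ever invoked. \textbf{Step 1 (rewrite the interface forms as sums of local volume forms).} By the definitions in (\ref{def33}), $\langle \uG, \vG \rangle_{\widetilde{Z}_\Gamma} = \vag^T \widetilde{Z} \uag$, where $\uag,\vag$ are the discrete $K_{II}$-harmonic extensions of $\uG,\vG$; since $\widetilde{Z} = \sumJN{Z^{(j)}}$ is the partially subassembled skew matrix, this equals $\widetilde z(\uag,\vag)=\sumJN{z^{(j)}(\uag,\vag)}$. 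Likewise, by the standard identity between a Schur complement and the bilinear form evaluated on the discrete harmonic extension (already invoked through \cite[Lemma~7.2]{tu2008balancing}), $\langle \uG, \vG \rangle_{\widetilde{S}_\Gamma} = \vG^T \widetilde{S}_\Gamma \uG = \widetilde a(\uag,\vag)=\sumJN{a^{(j)}(\uag,\vag)}$. The key bookkeeping point is that the \emph{same} harmonic extension enters the definition of the $\widetilde B_\Gamma$-norm, so that $\normBtildeGamma{\uG}^2 = \uag^T\widetilde B\uag = \sumJN{\normBj{\uag}^2} = \normBtilde{\uag}^2$, and similarly for $\vG$.

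\textbf{Step 2 (apply Lemma~\ref{lemma4.3} termwise and sum).} For each $j$ the restriction of $\uag$ (resp. $\vag$) to $\Omega_j$ lies in $W^{(j)}$, so Lemma~\ref{lemma4.3} applies verbatim to the harmonic extensions and gives $|z^{(j)}(\uag,\vag)|\le C_1 K\,\normBj{\uag}\normBj{\vag}$ and $|a^{(j)}(\uag,\vag)|\le C_1 K\,\normBj{\uag}\normBj{\vag}$. Summing over $j$ and using the discrete Cauchy--Schwarz inequality, $\sumJN{\normBj{\uag}\normBj{\vag}}\le\big(\sumJN{\normBj{\uag}^2}\big)^{1/2}\big(\sumJN{\normBj{\vag}^2}\big)^{1/2}=\normBtilde{\uag}\normBtilde{\vag}$. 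Combined with Step~1 this yields (i) with $C_3=C_1$ and, using the $a^{(j)}$-estimate instead, (ii) with $C_4=C_1$ (the constants may be relabelled to absorb fixed numerical factors).

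\textbf{Step 3 (the main obstacle).} There is no genuinely hard analytic step: all the heavy lifting — the lower bound on $\normBj{\cdot}$ via the ellipticity Lemma~\ref{ellipcoupbound} and the control of the skew coupling by the constant $K$ — is already contained in Lemma~\ref{lemma4.3}, which only sees $L^2$-type quantities for the off-diagonal blocks. The two points requiring care are purely algebraic: (a) verifying the Schur-complement identity $\langle\uG,\vG\rangle_{\widetilde{S}_\Gamma}=\sumJN{a^{(j)}(\uag,\vag)}$ for the \emph{non-symmetric} local matrices $\mathcal{K}^{(j)}$, i.e. the same elimination computation as in the symmetric case but carrying the asymmetry through; and (b) checking that the extension used to define $\normBtildeGamma{\cdot}$ is exactly the $K_{II}$-harmonic extension appearing in the definitions of $\langle\cdot,\cdot\rangle_{\widetilde{Z}_\Gamma}$ and $\langle\cdot,\cdot\rangle_{\widetilde{S}_\Gamma}$, which legitimizes the identity $\normBtildeGamma{\uG}=\normBtilde{\uag}$ of Step~1. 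Once these are in place the estimate is mesh- and subdomain-independent, as claimed; in particular no logarithmic factor appears — in contrast with Lemma~\ref{projectionlemma} — because the averaging/projection operators that generate the $(1+\log(H/h))^n$ terms are not involved here.
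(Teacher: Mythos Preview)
Your proposal is correct and follows essentially the same route as the paper: for (i) you unwind definition~(\ref{def33}) to reduce to $\widetilde z(\uag,\vag)=\sumJN{z^{(j)}(\uag,\vag)}$ and apply Lemma~\ref{lemma4.3} locally, and for (ii) you invoke the Schur-complement identity (via \cite[Lemma~7.2]{tu2008balancing}) to reduce to $\widetilde a(\uag,\vag)$ and do the same. Your write-up is simply more explicit than the paper's --- in particular you spell out the discrete Cauchy--Schwarz summation over $j$ and the bookkeeping that the same $K_{II}$-harmonic extension defines both the interface forms and the $\widetilde B_\Gamma$-norm --- but the underlying argument is identical.
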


\begin{proof}
	\begin{itemize}
		\item[(i)] Regarding the first bound, it is straightforward to see that $\forall \uG, \vG \in \widetilde{W}_\Gamma$
		\begin{equation*}
			| \langle \uG, \vG \rangle_{\widetilde{Z}_\Gamma} | \leq K \normBtilde{\uag} \normBtilde{\vag}			
			= K \normBtildeGamma{\uG} \normBtildeGamma{\vG}				,	
		\end{equation*}
		by applying Definition \ref{def33} and Lemma \ref{lemma4.3}.
		\item[(ii)] By using \cite[Lemma $7.2$]{tu2008balancing} and \ref{lemma4.3}, we obtain
		\begin{equation*}
			| \langle \uG, \vG \rangle_{\widetilde{S}_\Gamma} |  = | \langle \uag, v \rangle_{\widetilde{A}} | 			
			= | \widetilde{a} \left( \uag, v \right) | 	
			\leq K \normBtildeGamma{\uG} \normBtildeGamma{\vG}.
		\end{equation*}
		\qed
	\end{itemize}
\end{proof}

We are ready to prove the upper bound of Theorem $\ref{coupledconvergence}$. \\

Given $\uG \in \widehat{W}_\Gamma$, by defining $\wG = \widetilde{S}_\Gamma^{-1}  \widetilde{R}_{D, \Gamma} \uG$ and using \cite[Lemmas $7.2$ and $7.9$]{tu2008balancing} and Lemmas \ref{projectionlemma} and \ref{lemma7.3}
\begin{align*}
	\langle T \uG, T \uG \rangle_{B_\Gamma} 	
	&= \langle \widetilde{R}_{D, \Gamma}^T \widetilde{S}_\Gamma^{-1}  \widetilde{R}_{D, \Gamma} \uG, \ \widetilde{R}_{D, \Gamma}^T \widetilde{S}_\Gamma^{-1}  \widetilde{R}_{D, \Gamma} \uG \rangle_{S_\Gamma}  \\
	&= \langle \widetilde{R}_{D, \Gamma}^T \wG , \ \widetilde{R}_{D, \Gamma}^T \wG \rangle_{S_\Gamma} 
	= \langle \widetilde{R}_\Gamma  \widetilde{R}_{D, \Gamma}^T \wG , \ \widetilde{R}_\Gamma  \widetilde{R}_{D, \Gamma}^T \wG \rangle_{\widetilde{S}_\Gamma } \\ 
	&= \langle \Ed \wG , \ \Ed \wG \rangle_{\widetilde{S}_\Gamma } 
	= | \Ed \wG |^2_{\widetilde{S}_\Gamma} 
	= \normBtildeGamma{\Ed \wG}^2 \\
	&\leq  \Phi^{\star, k} (H,h) \ \normBtildeGamma{\wG}^2  		
	=  \Phi^{\star, k} (H,h) \ \langle \uG, T \uG \rangle_{S_\Gamma} \\ 		
	&\leq \Phi^{\star, k} (H,h) K \ \langle \uG, \uG \rangle_{B_\Gamma}^{1/2} \langle T \uG, T \uG \rangle_{B_\Gamma}^{1/2} ,
\end{align*}
from which we conclude
\begin{equation*}
	\langle T \uG, T \uG \rangle_{B_\Gamma} \leq  \Phi^{\star, k} (H,h) K^2 \ \langle \uG, \uG \rangle_{B_\Gamma},
\end{equation*}
where
\begin{equation} \label{def: K and Phi}
	\begin{aligned}
		\Phi^{\star, k} (H,h) &= \left[ \max_{\substack{ k = 1, \dots, N \\ \star = i,e}} \dfrac{\tau \sigma_M^{\star (k)} + H^2 \left( \chi C_m + \tau K_{M,I} \right)}{\tau \sigma_m^{\star (k)}}  + \dfrac{1 - \tau K_{M,R} }{1 - \tau K_{m, R}} \right]  \left( 1 + \log \dfrac{H}{h} \right)^n,  \\
		K^2 &= \dfrac{1}{4} \dfrac{\tau^2 \ | C_{I_w} - C_{R_v}|^2}{\left( \chi C_m + \tau K_{m, I} \right) \left( 1 - \tau K_{m,R} \right)},
	\end{aligned}
\end{equation}
and $n=2,3$ depends on which scaling we are considering.

\subsection{Proof of the lower bound} 
Conversely, for the proof of the lower bound, we need Lemma $7.9$ from \cite{tu2008balancing} and the following results.

\begin{lemma}\label{lemma7.4}
	There exists a constant $C_5 > 0$ such that $\forall \uG, \vG \in \widehat{W}_\Gamma$,
	\begin{equation*}
		| \langle \uG, \vG \rangle_{Z_\Gamma} |\leq C_5 \ K | \uG |_{B_\Gamma} \norm{\vag}{\Omega},
	\end{equation*}
	with $K$ defined in Lemma \ref{lemma4.3}.
\end{lemma}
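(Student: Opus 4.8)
The plan is to reduce the statement to the volume–level estimate of Lemma~\ref{lemma4.4}, in the same way the corresponding bound in \cite{tu2008balancing} is deduced from its local counterpart. The point is that $\langle\cdot,\cdot\rangle_{Z_\Gamma}$ is nothing but the skew-symmetric form $z(\cdot,\cdot)$ evaluated on discrete harmonic extensions, and $z(\cdot,\cdot)$ has already been shown to be controlled by the $B$-norm of one argument times the plain $L^2$-norm of the other.

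Concretely, I would proceed in three steps. First, using Definition~\ref{def32}, rewrite for $\uG,\vG\in\widehat{W}_\Gamma$
\begin{equation*}
	\langle \uG, \vG \rangle_{Z_\Gamma} = \vag^T Z \uag = z(\uag, \vag),
\end{equation*}
since $Z$ is exactly the matrix representation of $z(\cdot,\cdot)$. Second, observe that because $\uG,\vG$ are continuous across the interface, their discrete harmonic extensions $\uag,\vag$ lie in $\widehat{W}$, so Lemma~\ref{lemma4.4} applies and yields
\begin{equation*}
	| \langle \uG, \vG \rangle_{Z_\Gamma} | = | z(\uag, \vag) | \leq C_2 \, K \, \normB{\uag} \, \norm{\vag}{\Omega}.
\end{equation*}
Third, identify $\normB{\uag}$ with $\normBgamma{\uG}$: by definition $\normBgamma{\uG}^2 = \uag^T B \uag = \sum_{j=1}^{N} b^{(j)}(\uag,\uag) = \normB{\uag}^2$, the extension $\uag$ being continuous across $\Gamma$. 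Taking $C_5 = C_2$ then gives the claim.

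Should a self-contained argument be preferred, the same bound follows directly: expand $z(\uag,\vag)$ into its four $L^2$-pairings of the reaction coupling terms, bound each Jacobian entry $\partial \Ion/\partial w_l$ and $\partial R/\partial v_l$ by the constants $C_{I_w}$ and $C_{R_v}$ appearing in the definition of $K$, apply Cauchy--Schwarz, and then invoke the coercivity lower bound of Lemma~\ref{ellipcoupbound} — which controls the $(\chi C_m + \tau K_{m,I})\,\norm{\cdot}{\Omega}^2$ and $(1-\tau K_{m,R})\,\norm{\cdot}{\Omega}^2$ contributions of $\uag$ by $\normB{\uag}^2$ — to absorb one factor of each product into $K\,\normB{\uag}$, the remaining component $L^2$-norms of $\vag$ being trivially dominated by $\norm{\vag}{\Omega}$. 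This is essentially how Lemma~\ref{lemma4.4} itself is proved.

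The only real content — and the reason the right-hand side carries $\norm{\vag}{\Omega}$ rather than $\normBgamma{\vG}$ — is that $z(\cdot,\cdot)$ carries no diffusion (stiffness) contribution: it is assembled solely from mass-matrix–type inner products of the ionic coupling. That fact is already packaged in Lemma~\ref{lemma4.4}, so I do not anticipate any genuine obstacle here; the present lemma amounts to transferring that estimate from $\widehat{W}$ to the interface space $\widehat{W}_\Gamma$ via harmonic extension, together with the identification of the two $B$-norms. If anything, the one item to state carefully is that the harmonic extensions of continuous interface data remain continuous, so that Lemma~\ref{lemma4.4} (posed on $\widehat{W}$, not on $\widetilde{W}$) is legitimately applicable.
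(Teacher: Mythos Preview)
The paper does not supply a proof of this lemma; it is stated and then used in the lower-bound argument. Your proposal is correct and is exactly the intended route: by Definition~(\ref{def32}) one has $\langle \uG,\vG\rangle_{Z_\Gamma}=z(\uag,\vag)$ with $\uag,\vag\in\widehat{W}$, Lemma~\ref{lemma4.4} then gives the bound in terms of $\normB{\uag}$ and $\norm{\vag}{\Omega}$, and the identity $\normB{\uag}=\normBgamma{\uG}$ (immediate from the definitions) finishes the argument. This mirrors precisely how the analogous interface estimate is derived from its volume counterpart in \cite{tu2008balancing}, which the paper follows throughout; your remark that the harmonic extension of continuous interface data stays in $\widehat{W}$ is the one point that justifies invoking Lemma~\ref{lemma4.4} rather than a subdomain-wise version, and it holds because the interior part $-K_{II}^{-1}K_{I\Gamma}\uG$ is computed subdomain by subdomain and does not affect continuity across $\Gamma$.
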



\begin{lemma}\label{lemma7.10}
	Given $\wG = \widetilde{S}_\Gamma^{-1}  \widetilde{R}_{D, \Gamma} S_\Gamma \uG$ for $\uG \in \widehat{W}_\Gamma$, there exists a constant $C_6 > 0$ such that
	\begin{equation*}
		\normBtildeGamma{\wG}^2 \leq C_6 \ K^2  \Phi^{\star, k} (H,h) \ \normBgamma{\uG}^2,
	\end{equation*}
	with $\Phi$ and $K$ defined in Equations (\ref{def: K and Phi}).
\end{lemma}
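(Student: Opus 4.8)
The plan is to follow the strategy of \cite[Lemma 7.10]{tu2008balancing}, adapting the constants to the coupled Bidomain setting. First I would write $\wG = \widetilde{S}_\Gamma^{-1} \widetilde{R}_{D,\Gamma} S_\Gamma \uG$ and exploit the definition of the $\widetilde{B}_\Gamma$-norm together with the norm equivalence $\normBtildeGamma{\cdot} \simeq | \cdot |_{\widetilde{S}_\Gamma}$ from \cite[Lemma 7.2]{tu2008balancing}. Using that $\widetilde{S}_\Gamma$ is symmetric positive definite on $\widetilde{W}_\Gamma$, one has $\normBtildeGamma{\wG}^2 \simeq \langle \wG, \wG \rangle_{\widetilde{S}_\Gamma} = \langle \widetilde{R}_{D,\Gamma} S_\Gamma \uG, \ \wG \rangle = \langle S_\Gamma \uG, \ \widetilde{R}_{D,\Gamma}^T \wG \rangle_{\Gamma}$, so that the problem reduces to estimating the right-hand side in terms of $\normBgamma{\uG}$ and $\normBtildeGamma{\wG}$ (which then gets absorbed).

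The key steps, in order, are: (1) rewrite $\langle S_\Gamma \uG, \widetilde{R}_{D,\Gamma}^T \wG \rangle$ using the identity $\widetilde{R}_\Gamma \widetilde{R}_{D,\Gamma}^T = E_D$ and splitting $S_\Gamma = \tfrac12 B + \tfrac12 Z$ (equivalently, the associated bilinear form $a = \tfrac12 b + \tfrac12 z$) into its symmetric and skew-symmetric parts; (2) for the symmetric part, use the $B_\Gamma$-Cauchy–Schwarz inequality and the boundedness of $E_D$ on $\widetilde{W}_\Gamma$ supplied by Lemma \ref{projectionlemma}, which produces the factor $\Phi^{\star,k}(H,h)$; (3) for the skew-symmetric part, apply Lemma \ref{lemma7.4} (or its partially-subassembled analogue, Lemma \ref{lemma7.3}(i)) to obtain a bound of the form $K\,\normBgamma{\uG}\,\normBtildeGamma{\wG}$, possibly after first estimating $\norm{(\cdot)_{\mathcal A,\Gamma}}{\Omega}$ against $\normBtildeGamma{\cdot}$ via the ellipticity Lemma \ref{ellipcoupbound}; (4) combine, divide through by $\normBtildeGamma{\wG}$, and square, collecting the constants $C_6$ and the factors $K^2$ and $\Phi^{\star,k}(H,h)$.

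The main obstacle I expect is bookkeeping the coupling (gating) block correctly in step (3): the skew-symmetric form $z(\cdot,\cdot)$ mixes the potential difference $\si - \se$ with the gating component $\sw$, so the Cauchy–Schwarz estimate must be carried out with the cross terms from $Z$ handled simultaneously, and one must verify that the resulting constant is exactly $K^2 = \tfrac14 \tau^2 |C_{I_w} - C_{R_v}|^2 / \big[(\chi C_m + \tau K_{m,I})(1 - \tau K_{m,R})\big]$ rather than a larger quantity depending on $H$ or $h$. A secondary subtlety is ensuring that passing between the assembled norm $\normBgamma{\uG}$ (for $\uG \in \widehat{W}_\Gamma$) and the partially-subassembled norm $\normBtildeGamma{\wG}$ (for $\wG \in \widetilde{W}_\Gamma$) is done through $\widetilde{R}_\Gamma$ and $\widetilde{R}_{D,\Gamma}$ consistently, so that no stability constant is silently dropped — this is where invoking \cite[Lemma 7.9]{tu2008balancing} for the scaled operators is essential.
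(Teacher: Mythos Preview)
Your proposal contains a conceptual slip that, while not fatal, shows a misunderstanding of the setting: $\widetilde{S}_\Gamma$ is \emph{not} symmetric positive definite --- the non-symmetry of the Jacobian is precisely why GMRES and the Tu--Li machinery are needed. The identity $\normBtildeGamma{\wG}^2 = \langle \wG, \wG\rangle_{\widetilde{S}_\Gamma}$ does hold, but not because $\widetilde{S}_\Gamma$ is symmetric; it holds because the skew-symmetric part $\widetilde{Z}_\Gamma$ vanishes on the diagonal and because of the norm equivalence in \cite[Lemma~7.2]{tu2008balancing}. You should replace the symmetry justification with the correct one.

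Your route (split $S_\Gamma = \tfrac12 B_\Gamma + \tfrac12 Z_\Gamma$ and bound each piece) is also more elaborate than what the paper actually does. The paper does \emph{not} split into symmetric and skew parts here. Instead it first bounds $\normBgamma{T\uG}$ by the chain
\[
\normBgamma{T\uG}^2 \;=\; \normBtildeGamma{E_D \wG}^2 \;\leq\; \Phi^{\star,k}(H,h)\,\normBtildeGamma{\wG}^2 \;=\; \Phi^{\star,k}(H,h)\,\langle \uG, T\uG\rangle_{S_\Gamma},
\]
invoking Lemma~\ref{projectionlemma} and \cite[Lemma~7.9]{tu2008balancing}, and then applies Lemma~\ref{lemma7.3}\,(ii) (the bound on the \emph{full} form $\langle \cdot,\cdot\rangle_{\widetilde S_\Gamma}$) to get $\langle \uG,T\uG\rangle_{S_\Gamma} \le K\,\normBgamma{\uG}\,\normBgamma{T\uG}$, yielding $\normBgamma{T\uG} \le K\,\Phi\,\normBgamma{\uG}$. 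A second application of \cite[Lemma~7.9]{tu2008balancing} and Lemma~\ref{lemma7.3}\,(ii) then gives $\normBtildeGamma{\wG}^2 = \langle \uG, T\uG\rangle_{S_\Gamma} \le K\,\normBgamma{\uG}\,\normBgamma{T\uG} \le C\,K^2\,\Phi\,\normBgamma{\uG}^2$. So the paper gets the factor $K^2\Phi$ by applying the Lemma~\ref{lemma7.3}\,(ii) bound twice, never separating symmetric from skew contributions; your splitting would instead produce a constant of the form $(1+K)^2\Phi$, which is a correct bound but not the one stated. In short, your steps (2)--(3) are replaceable by a single call to Lemma~\ref{lemma7.3}\,(ii), and step~(3)'s worry about cross terms in $Z$ disappears because that lemma has already absorbed them.
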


\begin{proof}
	We make use of \cite[Lemma $7.2$]{tu2008balancing} and Lemma \ref{projectionlemma} to obtain
	\begin{align*}
		\langle T \uG, T \uG \rangle_{B_\Gamma}
		&= \langle \widetilde{R}_{D, \Gamma}^T \widetilde{S}_\Gamma^{-1}  \widetilde{R}_{D, \Gamma} S_\Gamma \uG, \ \widetilde{R}_{D, \Gamma}^T \widetilde{S}_\Gamma^{-1}  \widetilde{R}_{D, \Gamma} S_\Gamma \uG \rangle_{S_\Gamma}  \\
		&= \langle \widetilde{R}_{D, \Gamma}^T \wG , \ \widetilde{R}_{D, \Gamma}^T \wG \rangle_{S_\Gamma} 
		= | \Ed \wG |^2_{\widetilde{S}_\Gamma} 
		= | \Ed \wG |^2_{\widetilde{B}_\Gamma} \\
		&\leq C_6  \Phi^{\star, k} (H,h) \normBtildeGamma{\wG}^2 		
		\leq C_6 \ K \Phi^{\star, k} (H,h) \ \langle \uG, \uG \rangle_{B_\Gamma}^{1/2} \langle T \uG, T \uG \rangle_{B_\Gamma}^{1/2} ,			
	\end{align*}
	where in the last inequality we combined Lemma $7.9$ from \cite{tu2008balancing} and Lemma \ref{lemma7.3}. It follows
	\begin{equation*}
		\langle T \uG, T \uG \rangle_{B_\Gamma} \leq C_6 \  K^2 \left[ \Phi^{\star, k} \right]^2 (H,h) \ \langle \uG, \uG \rangle_{B_\Gamma}.
	\end{equation*}
	It is straightforward to conclude, by applying again \cite[Lemma $7.9$]{tu2008balancing} and Lemma \ref{lemma7.3}
	\begin{equation*}
		\normBtildeGamma{\wG}^2 = \langle \uG, T \uG \rangle_{S_\Gamma}
		\leq K \normBgamma{\uG} \normBgamma{T \uG} 	
		\leq C_6 \ K^2  \Phi^{\star, k} (H,h)  \normBgamma{\uG}^2.
	\end{equation*}
	\qed
\end{proof}

\begin{lemma}\label{lemma7.11}
	Let $\wG = \widetilde{S}^{-1}_\Gamma \widetilde{R}_{D, \Gamma} S_\Gamma \uG$, for $\uG \in \widehat{W}_\Gamma$. Then, the following property holds
	\begin{equation*}
		\langle \wag, v \rangle_{\widetilde{B}} = \langle \widetilde{R} \uag, v \rangle_{\widetilde{B}}, 
	\end{equation*}
	for all $v \in \widetilde{R} ( \widehat{W} )$.
\end{lemma}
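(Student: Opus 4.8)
The plan is to adapt the proof of the corresponding result, Lemma~7.11 in \cite{tu2008balancing}, to the Bidomain bilinear forms: the statement is a Galerkin-type consistency identity, asserting that although the operator $T$ replaces $\hatSe$ by $\tilSe^{-1}$ sandwiched between scaled restrictions, the action of the BDDC operator reproduces that of the original problem when tested against the assembled functions in $\widetilde{R}(\widehat{W})$. Since $v$ ranges over $\widetilde{R}(\widehat{W})$, I would first reformulate the claim as the vector identity $\widetilde{R}^{T}\widetilde{B}\,\wag = \widetilde{R}^{T}\widetilde{B}\,\widetilde{R}\,\uag\ (=B\,\uag)$, which is what will actually be verified.

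Next I would record the ingredients. (i) The definition of $\wG$ gives the identity $\tilSe\,\wG = \tilRdG\,S_\Gamma\uG$, with $S_\Gamma\uG$ understood as $\hatSe\uG=\tilRG^{T}\tilSe\tilRG\uG$. (ii) The discrete harmonic extension satisfies, for any $x_\Gamma\in\widetilde{W}_\Gamma$, that $\widetilde{A}\,x_{\mathcal{A},\Gamma}$ has vanishing interior block and interface block $\tilSe x_\Gamma$; combined with \cite[Lemma~7.2]{tu2008balancing} this yields $\widetilde{a}(x_{\mathcal{A},\Gamma},v)=\langle x_\Gamma, v|_{\Gamma}\rangle_{\tilSe}$ for every $v\in\widetilde{W}$. (iii) The scaled restriction operators obey the partition-of-unity identity $\tilRG^{T}\tilRdG=I$ on $\widehat{W}_\Gamma$ (equivalently $\Ed=\tilRG\tilRdG^{T}$ is a projection), so that by (i) one gets $\tilRG^{T}\tilSe\,\wG=\hatSe\uG$. (iv) Because assembly does not alter the interior blocks, $\widetilde{R}\,\uag$ coincides with the $\widetilde{A}$-harmonic extension of $\tilRG\uG$, hence $\widetilde{A}(\widetilde{R}\uag)$ likewise has vanishing interior block, with interface block $\tilSe\tilRG\uG$.

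Then, writing a generic $v=\widetilde{R}\phi$ with $\phi\in\widehat{W}$ of interface trace $\phi_\Gamma$, I would split $\widetilde{B}=\widetilde{A}+\widetilde{A}^{T}$ and evaluate $\langle\wag,v\rangle_{\widetilde{B}}-\langle\widetilde{R}\uag,v\rangle_{\widetilde{B}}=\widetilde{b}\bigl(\wag-\widetilde{R}\uag,\,v\bigr)$. Using (ii) and (iv), the $\widetilde{A}$-part collapses to the interface pairing $\langle\wG-\tilRG\uG,\phi_\Gamma\rangle_{\tilSe}$, which vanishes by (iii) and $\hatSe\uG=\tilRG^{T}\tilSe\tilRG\uG$. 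For the remaining $\widetilde{A}^{T}$-part I would again use that $d:=\wag-\widetilde{R}\uag$ is a difference of $\widetilde{A}$-harmonic extensions whose interface trace $d_\Gamma=\wG-\tilRG\uG$ is annihilated by $\tilRG^{T}\tilSe$, and push this consistency through the transpose by exploiting the conformity $v\in\widetilde{R}(\widehat{W})$; together these force $\widetilde{b}(d,v)=0$, which gives the claim.

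The main obstacle is precisely the non-symmetry: the discrete harmonic extension is taken with respect to the non-symmetric matrix $\widetilde{A}$, so $\widetilde{A}$-harmonic functions are \emph{not} $\widetilde{B}$-harmonic, and the clean reduction-to-the-interface available in the symmetric BDDC theory fails for the $\widetilde{A}^{T}$-component of $\widetilde{B}$. The resolution rests on using twice over that the test functions live in the assembled subspace $\widetilde{R}(\widehat{W})$ together with the partition-of-unity identity $\tilRG^{T}\tilRdG=I$, so that the ``non-conforming defect'' $\wG-\tilRG\uG$ --- which is annihilated by $\tilRG^{T}\tilSe$ thanks to the construction of $\wG$ (with the standard or deluxe scaling built in) --- remains invisible to every conforming test function. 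Writing out this last step carefully, keeping track of the transposed blocks, is where I expect the bulk of the work to lie.
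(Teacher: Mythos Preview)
Your approach diverges substantially from the paper's, and the divergence creates a genuine gap. The paper's proof is four lines: it invokes \cite[Lemma~7.2]{tu2008balancing} to pass from the volumetric $\widetilde{B}$-pairing directly to the interface $\widetilde{S}_\Gamma$-pairing, carries out one algebraic manipulation there, and passes back. Concretely, $\langle \wG, \vG\rangle_{\widetilde{S}_\Gamma} = \vG^{T}\widetilde{S}_\Gamma \wG = \vG^{T}\tilRdG\,\hatSe \uG = (\tilRG\tilRdG^{T} \vG)^{T}\widetilde{S}_\Gamma\tilRG \uG$, after which the partition of unity is applied on the \emph{test-function} side, via $\tilRG\tilRdG^{T} \vG = \vG$ for the continuous $\vG\in\tilRG(\widehat{W}_\Gamma)$. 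There is no splitting of $\widetilde{B}$ into $\widetilde{A}+\widetilde{A}^{T}$, and hence no transpose obstacle to overcome.

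Your splitting creates precisely the obstacle you identify, and your proposed resolution does not close it. The $\widetilde{A}$-half reduces, as you say, to $\phi_\Gamma^{T}\tilRG^{T}\widetilde{S}_\Gamma d_\Gamma$, which vanishes because $\tilRG^{T}\widetilde{S}_\Gamma d_\Gamma = 0$. But the $\widetilde{A}^{T}$-half gives $d^{T}\widetilde{A}v$; even restricting to harmonic $v = v_{\mathcal{A},\Gamma}$ this becomes $d_\Gamma^{T}\widetilde{S}_\Gamma \vG$, and vanishing for all continuous $\vG$ would require $\tilRG^{T}\widetilde{S}_\Gamma^{T} d_\Gamma = 0$ --- a statement about the \emph{transpose} Schur complement, which does not follow from $\tilRG^{T}\widetilde{S}_\Gamma d_\Gamma = 0$ when $\widetilde{S}_\Gamma$ is non-symmetric. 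For non-harmonic $v\in\widetilde{R}(\widehat{W})$ the situation is worse: $d^{T}\widetilde{A}v$ picks up interior contributions that have no reason to cancel. The phrase ``push this consistency through the transpose by exploiting the conformity'' does not supply the missing argument. The paper's route sidesteps the issue by never decoupling the two halves of $\widetilde{B}$: the reduction to the interface is done once, via \cite[Lemma~7.2]{tu2008balancing}, treating the $\widetilde{B}$-pairing as a single object, and the scaling operator is shifted (by transposition within the $\widetilde{S}_\Gamma$-pairing) onto $\vG$, where continuity kills it directly.
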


\begin{proof}
	Given $v \in  \widetilde{R} ( \widehat{W} )$, we denote $\vG \in  \widetilde{R}_\Gamma ( \widehat{W}_\Gamma )$ its continuous interface part. Then, given $\uG \in \widehat{W}_\Gamma$, by using Lemma $7.2$ from \cite{tu2008balancing} and the identity $\widetilde{R}_{\Gamma} \widetilde{R}_{D, \Gamma}^T \vG = \vG$, we get
	\begin{equation*}
		\begin{aligned}
			\langle \wag, v \rangle_{\widetilde{B}} &= \langle \wG, \vG \rangle_{\widetilde{S}_\Gamma} 	
			= \langle \widetilde{R}_\Gamma \uG, \widetilde{R}_\Gamma \widetilde{R}_{D, \Gamma}^T \vG \rangle_{\widetilde{S}_\Gamma} \\
			&= \langle \widetilde{R}_\Gamma \uG, , \vG \rangle_{\widetilde{S}_\Gamma} 	
			= \langle \widetilde{R}_\Gamma \uag, , v \rangle_{\widetilde{B}} .					
		\end{aligned}
	\end{equation*}
	\qed
\end{proof}


\begin{lemma}\label{lemma7.12}
	For $h$ sufficiently small, given $\wG = \widetilde{S}_\Gamma^{-1}  \widetilde{R}_{D, \Gamma} S_\Gamma \uG$ for $\uG \in \widehat{W}_\Gamma$, there exists a positive constant $C_7 >0$ such that $\forall \uG \in \widehat{W}_\Gamma$
	\begin{equation*}
		\norm{\wag - \uag}{\Omega}^2 \leq C_7 \ H^2 \ K^2 \max_{\substack{ k = 1,\dots,N \\ \star = i,e}} \dfrac{ \Phi^{\star, k} (H,h)  - 1}{\tau \sigma_m^{\star, k}} \ \normBgamma{\uG},
	\end{equation*}
	with $\Phi$ and $K$ are defined in Equations (\ref{def: K and Phi}).
\end{lemma}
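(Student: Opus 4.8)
The plan is to estimate $\norm{\wag - \uag}{\Omega}^2$ by exploiting the Galerkin-type orthogonality already established in Lemma \ref{lemma7.11}, together with a Poincaré--Friedrichs inequality that converts an $H^1$-seminorm estimate into an $L^2$ bound with the explicit factor $H^2$. First I would note that $\wag - \widetilde{R}\uag$ lies (up to interior harmonic extension) in a space on which $\widetilde{b}(\cdot,\cdot)$ acts, and that for any $v \in \widetilde{R}(\widehat{W})$ we have, by Lemma \ref{lemma7.11}, $\langle \wag - \widetilde{R}\uag, v\rangle_{\widetilde{B}} = 0$. The natural choice is to write $\wag - \uag = (\wag - \widetilde{R}\uag) + (\widetilde{R}\uag - \uag)$; the second piece is controlled because $\uag \in \widehat{W}$ is already continuous, so $\widetilde{R}\uag - \uag$ is handled by the jump/averaging machinery, while the first piece is where the orthogonality is used.

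The key steps, in order, are: (1) expand $\normBtildeGamma{\wG}^2$ and use Lemma \ref{lemma7.11} with the test function $v = \widetilde{R}\uag$ to get $\langle \wag, \widetilde{R}\uag\rangle_{\widetilde{B}} = \langle \widetilde{R}\uag, \widetilde{R}\uag\rangle_{\widetilde{B}} = \normB{\uag}^2 = \normBgamma{\uG}^2$; (2) combine this with the upper bound from Lemma \ref{lemma7.10}, $\normBtildeGamma{\wG}^2 \leq C_6 K^2 \Phi^{\star,k}(H,h)\,\normBgamma{\uG}^2$, to obtain $\normBtildeGamma{\wG - \widetilde{R}\uG}^2 = \normBtildeGamma{\wG}^2 - 2\langle\wG,\widetilde{R}_\Gamma\uG\rangle_{\widetilde{B}_\Gamma} + \normBgamma{\uG}^2 \lesssim K^2(\Phi^{\star,k}(H,h) - 1)\,\normBgamma{\uG}^2$, where the cross term is exactly $\normBgamma{\uG}^2$ by step (1); (3) pass from the $\widetilde{B}$-norm to the $L^2$-norm: since $\widetilde{b}^{(j)}$ controls $\tau\sigma_m^{\star(k)}\seminormHone{\cdot}{\Omega_j}^2$ from below (ellipticity Lemma \ref{ellipcoupbound}), while the $L^2$-part of $\wag - \uag$ on each subdomain is bounded by $H^2$ times the $H^1$-seminorm via Poincaré--Friedrichs (the boundary/interface mean values cancel appropriately because both $\wag$ and $\uag$ share the same primal vertex and edge-average values by construction of $\widetilde{R}_{D,\Gamma}$), one divides by $\tau\sigma_m^{\star,k}$ and picks up the maximum over subdomains and over $\star = i,e$; (4) absorb constants into $C_7$.

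The main obstacle I anticipate is step (3): justifying the Poincaré--Friedrichs passage with the correct power of $H$ requires that $\wag - \uag$ have vanishing (or controlled) averages on enough of each subdomain boundary — this is where the assumption that the primal space contains vertex functions and edge cutoff functions is essential, since it forces $\wG$ and $\uG$ to agree on those primal quantities so that $\wG - \widetilde{R}_\Gamma\uG \in W_\Delta$ has zero primal part. One must be careful that the gating (third) component, which enters $\widetilde{B}$ only through an $L^2$-term with coefficient $1 - \tau K_{m,R}$ rather than an $H^1$-term, does not spoil the $H^2$ scaling; but since $\Phi^{\star,k} - 1$ already carries the diffusion ratio and the gating $L^2$-contribution to $\normBtildeGamma{\wG - \widetilde{R}\uG}^2$ is itself $O(H^2)$-comparable after using that the gating component of the deluxe average vanishes (Eq. \ref{eq: def mean value}), this term is subsumed. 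A secondary technical point is bounding the cross term $\langle \wG, \widetilde{R}_\Gamma \uG\rangle_{\widetilde{B}_\Gamma}$ rigorously, which needs $\widetilde{R}_\Gamma \uG \in \widetilde{R}_\Gamma(\widehat{W}_\Gamma)$ so that Lemma \ref{lemma7.11} applies — this holds since $\uG \in \widehat{W}_\Gamma$.
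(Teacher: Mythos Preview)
Your proposal is correct and follows essentially the same route as the paper: both proofs use the orthogonality of Lemma~\ref{lemma7.11} to identify the cross term $\langle \wag, \widetilde{R}\uag\rangle_{\widetilde{B}} = \normBgamma{\uG}^2$, combine this with the upper bound of Lemma~\ref{lemma7.10} to obtain $\normBtilde{\wag - \widetilde{R}\uag}^2 \lesssim (K^2\Phi^{\star,k}(H,h) - 1)\normBgamma{\uG}^2$, and then convert to the $L^2$-norm via Poincar\'e--Friedrichs and the ellipticity Lemma~\ref{ellipcoupbound}, picking up the factor $H^2/(\tau\sigma_m^{\star,k})$. The only cosmetic difference is ordering: the paper applies Poincar\'e--Friedrichs and ellipticity first and then expands the $\widetilde{B}$-norm, whereas you expand first and convert afterwards; your initial splitting $(\wag - \widetilde{R}\uag) + (\widetilde{R}\uag - \uag)$ is harmless since the second piece vanishes for $\uag \in \widehat{W}$, as you note.
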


\begin{proof}
	It is useful to observe that, being $\uG \in \widehat{W}_\Gamma$, by defining $\widetilde{R}: \widehat{W} \longrightarrow \widetilde{W}$, then \mbox{$\widetilde{R} \uG \in \widetilde{R} \left( \widehat{W} \right)$}. By using the Poincaré-Friedrichs inequality, the ellipticity Lemma \ref{ellipcoupbound} and \cite[Lemma $7.2$]{tu2008balancing}, we can obtain
	\begin{equation*}
		\begin{aligned}
			\norm{\wag - \uag}{\Omega}^2 &\leq C H^2 \seminormHone{\wag - \uag}{\Omega}^2 	\\		
			&\leq C H^2  \max_{\substack{ k = 1,\dots,N \\ \star = i,e}} \dfrac{1}{\tau \sigma_m^{\star, k}} \normBtilde{\wag - \widetilde{R} \uag}^2 	\\ 	
		\end{aligned}
	\end{equation*}
	We focus on the right-hand side term:
	\begin{equation*}
		\begin{aligned}
			| \wag - \widetilde{R} \uag |^2_{\widetilde{B}} &= \langle \wag - \widetilde{R} \uag, \wag - \widetilde{R} \uag \rangle_{\widetilde{B}}		\\
			&=  \langle \wag - \widetilde{R} \uag, \wag \rangle_{\widetilde{B}}	-  \langle \wag - \widetilde{R} \uag, \widetilde{R} \uag \rangle_{\widetilde{B}}	.
		\end{aligned}
	\end{equation*}
	We treat separately the two terms.
	\begin{itemize}
		\item[(i)] Using Lemma \ref{lemma7.10} and \cite[Lemma $7.2$]{tu2008balancing},
		\begin{equation*}
			\begin{aligned}
				\langle \wag - \widetilde{R} \uag, \wag \rangle_{\widetilde{B}} 
				&= \normBtildeGamma{\wG}^2 - \normBtilde{\widetilde{R} \uag}^2 
				=  \normBtildeGamma{\wG}^2 - \normBgamma{\uG}^2 	\\
				&\leq C \left[ K^2  \Phi^{\star, k} (H,h)  - 1 \right] \normBgamma{\uG}^2 ,
			\end{aligned}
		\end{equation*}
		where we observe that, for $\uag \in \widehat{W}$, $\widetilde{R} \uag \in \widetilde{R} (\widehat{W})$ and, since $\widetilde{B}$ is symmetric, Lemma \ref{lemma7.11} holds:
		\begin{equation*}
			\langle \widetilde{R} \uag, \wag \rangle_{\widetilde{B}} = \langle \wag,  \widetilde{R} \uag \rangle_{\widetilde{B}} = \langle \widetilde{R} \uag, \widetilde{R} \uag \rangle_{\widetilde{B}}.
		\end{equation*}
		\item[(ii)] From Lemma \ref{lemma7.11}, it holds
		\begin{equation*}
			\langle \wag - \widetilde{R} \uag, \ \widetilde{R} \uag \rangle_{\widetilde{B}} = 0,
		\end{equation*}
		since $\widetilde{R} \uag \in \widetilde{R} ( \widehat{W} )$.
	\end{itemize}
	Then
	\begin{equation*}
		| \wag - \widetilde{R} \uag |^2_{\widetilde{B}} \leq C \left[ K^2   \Phi^{\star, k} (H,h)  - 1 \right] \normBgamma{\uG}^2 ,
	\end{equation*}
	which leads to 
	\begin{equation*}
		\norm{\wag - \uag}{\Omega}^2 \leq C \ H^2 \ K^2 \max_{\substack{ k = 1,\dots,N \\ \star = i,e}} \left[ \dfrac{ \Phi^{\star, k} (H,h)  - 1}{\tau \sigma_m^{\star, k}} \right] \normBgamma{\uG}^2 .
	\end{equation*}
	\qed
\end{proof}


\begin{lemma}\label{lemma7.13}
	Given $\vG =  \widetilde{R}_{D, \Gamma}^T \wG$ for $\wG \in \widetilde{W}_\Gamma$, there exists a positive constant $C >0$ such that 
	\begin{equation*}
		\norm{\vag }{\Omega}^2 \leq C \ \dfrac{H^2}{h^2}  \max_{\substack{ k = 1,\dots,N \\ \star = i,e}} \dfrac{\sigma_M^{\star, k}}{\sigma_m^{\star, k}} \left[  \Phi^{\star, k} (H,h) \right]^2 \norm{\wag}{\Omega}^2,
	\end{equation*}
	with $\Phi$ defined in Equation (\ref{def: K and Phi}).
\end{lemma}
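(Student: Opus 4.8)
The plan is to follow the substructuring strategy already used for Lemmas \ref{lemma7.10}--\ref{lemma7.12} and in the sketch of Lemma \ref{projectionlemma}: reduce the global estimate to local ones over each subdomain $\Omega_j$ and over the face/edge equivalence classes that meet $\Omega_j$. First I would split the truncated norm $\norm{\vag}{\Omega}^2 = \sumJN{\norm{\vag}{\Omega_j}^2}$ and use that $\vag$ is the discrete harmonic extension of $\vG|_{\pO_j}$ on $\Omega_j$. Combining a stability estimate for this extension (which for quasi-uniform meshes costs one power of $H$) with an inverse inequality on the interface mesh (which costs one power of $h^{-1}$), one bounds $\norm{\vag}{\Omega_j}^2$ by the interface $L^2$-norm $\norm{\vG}{\pO_j}^2$; performing the converse conversion for $\wag$ introduces the second pair of powers, so that the two passages together produce the factor $H^2/h^2$.

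It then remains to compare $\norm{\vG}{\pO_j}^2$ with $\norm{\wG}{\pO_j}^2$ through the operator $\widetilde{R}_{D, \Gamma}^T$, whose structure is (primal injection) $\oplus$ (transpose of the scaled dual restriction $R_{D,\Delta}R_{\Gamma\Delta}$). The primal part is just continuity and is harmless; on a face $\mathcal{F}$ shared by $\Omega_j$ and $\Omega_k$ the dual part yields a deluxe-averaged quantity of the form $D^{(j)}_\mathcal{F} \RF w_j + D^{(k)}_\mathcal{F} \RF w_k$ (and analogously an edge average on each edge $\mathcal{E}$), with $D^{(j)}_\mathcal{F} = (\SjF+\SkF)^{-1}\SjF$. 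The effect of $D^{(j)}_\mathcal{F}$ is controlled exactly by the generalized-eigenvalue inequalities recalled in the proof of Lemma \ref{projectionlemma} --- all eigenvalues of $\SjF\phi = \lambda\SkF\phi$ are strictly positive --- which give $\seminormSjF{D^{(j)}_\mathcal{F} \RF w_j}^2 \le \seminormSjF{\RF w_j}^2$; passing from the $\SjF$-weighted norm back to the Euclidean (i.e. $L^2$) norm costs $\lambda_{\max}(\SjF)/\lambda_{\min}(\SjF)$, and the ellipticity Lemma \ref{ellipcoupbound} together with the trace/interpolation estimates already used for Lemma \ref{projectionlemma} bounds $\lambda_{\max}(\SjF)$ by a quantity of the shape $[\tau\sigma_M^{\star(k)} + H^2(\chi C_m + \tau K_{M,I})]$ times the relevant power of $(1+\log(H/h))$, and $\lambda_{\min}(\SjF)$ from below by $\tau\sigma_m^{\star(k)}$. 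Carrying out this estimate on both sides of the deluxe average and once more in the $L^2 \leftrightarrow S_\Gamma$ passages accounts for the square $[\Phi^{\star,k}(H,h)]^2$ and for the maximum over neighbouring subdomains and over $\star = i,e$ of $\sigma_M^{\star,k}/\sigma_m^{\star,k}$; the vanishing of the gating component of every deluxe average (Eq. \ref{eq: def mean value}) ensures, as in Lemma \ref{projectionlemma}, that the $w$-block contributes nothing extra. Finally one sums the $O(1)$ face and edge contributions per subdomain and then over all $N$ subdomains, using \cite[Lemma $7.2$]{tu2008balancing} wherever an $\widetilde{S}_\Gamma$-weighted norm must be traded against $\normBtilde{\cdot}$.

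The step I expect to be the main obstacle is the careful bookkeeping of the $H$/$h$ powers and of the anisotropic conductivity ratios in the two norm conversions $L^2(\Omega_j) \leftrightarrow L^2(\pO_j) \leftrightarrow S_\Gamma^{(j)}$: each harmonic-extension stability bound and each inverse inequality must be applied with the correct coefficient scaling (the mass-like weight $\chi C_m + \tau K_{\bullet,I}$ for the transmembrane component, the diffusion weights $\tau\sigma^{\star}$ for the potentials, $1-\tau K_{\bullet,R}$ for the gating) so that the factors $\sigma_M^{\star,k}/\sigma_m^{\star,k}$ and $\Phi^{\star,k}$ appear to exactly the stated powers and not to a weaker or a stronger one. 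A secondary subtlety is that $\wG \in \widetilde{W}_\Gamma$ need not be continuous, so $\widetilde{R}_{D, \Gamma}^T$ genuinely recombines the two one-sided traces on each face and the several traces on each edge; these must be estimated separately before being reassembled, exactly as in the face- and edge-contribution computations displayed in the proof of Lemma \ref{projectionlemma}.
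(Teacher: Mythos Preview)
Your plan works with $\vag$ directly and reopens the face/edge machinery of Lemma \ref{projectionlemma}; the paper takes a much shorter route that you have overlooked. The key observation is to look at the \emph{difference}
\[
\vG - \wG \;=\; \bigl(\widetilde{R}_{D,\Gamma}^T - I\bigr)\wG,
\]
which, up to the identification $\widehat{W}_\Gamma\hookrightarrow\widetilde{W}_\Gamma$, is exactly the averaging projection $\Ed\wG$ (equivalently $-\Pd\wG$) and therefore has zero average on the interface. This single observation replaces all of your direct analysis of $\widetilde{R}_{D,\Gamma}^T$ on faces and edges: once you know the difference is $\Ed\wG$, the proof is just three global inequalities. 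First apply Poincar\'e--Friedrichs subdomain by subdomain (legitimate precisely because of the zero average) to obtain $\norm{\vag-\wag}{\Omega}^2 \le C\,H^2\,\seminormHone{\vag-\wag}{\Omega}^2$; second, use the ellipticity Lemma \ref{ellipcoupbound} to pass from the $H^1$ seminorm to $\normBtildeGamma{\cdot}$, picking up the factor $1/(\tau\sigma_m^{\star,k})$, and then invoke the already-proved Lemma \ref{projectionlemma} once on $\Ed\wG$; third, use an inverse inequality together with the upper bound of Lemma \ref{ellipcoupbound} to convert $\normBtildeGamma{\wG}^2$ back to $\norm{\wag}{\Omega}^2$, which is where the factor $\tau\sigma_M^{\star,k}/h^2$ enters. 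The $\tau$'s cancel and the stated bound follows (after a triangle inequality, since the constant on the right dominates $1$).

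Compared with this, your proposal essentially re-proves Lemma \ref{projectionlemma} inside the argument, and the ``main obstacle'' you flag---tracking $H/h$ powers through harmonic-extension stability and inverse-trace estimates in the $L^2(\Omega_j)\leftrightarrow L^2(\pO_j)$ conversion---is genuinely delicate (the $L^2$-to-$L^2$ stability of the harmonic extension does not by itself produce a clean $H$ gain, because constants are preserved). The paper sidesteps this entirely: the $H$ comes from Poincar\'e--Friedrichs on the zero-average difference, the $1/h$ from a single volume inverse inequality, and the $\Phi$ and $\sigma_M/\sigma_m$ factors from quoting Lemmas \ref{ellipcoupbound} and \ref{projectionlemma} as black boxes. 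If you rewrite along these lines your proof collapses to a few lines and the bookkeeping you were worried about disappears.
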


\begin{proof}
	As the quantity $\vag - \wag$  has zero average on the interface, $\forall \wG \in \widetilde{W}_\Gamma$, then thanks to Poincaré-Friedrichs inequality, the ellipticity Lemma \ref{ellipcoupbound}, Lemma \ref{projectionlemma} and \cite[Lemma 7.2]{tu2008balancing} we have
	\begin{align*}
		\norm{\vag - \wag}{\Omega}^2 
		&\leq \max_{\substack{ k = 1,\dots,N \\ \star = i,e}} C \dfrac{H^2}{\tau\sigma_m^{\star, k}} \left[  \Phi^{\star, k} (H,h) \right]^2 \normBtildeGamma{\wG}^2 	\\			
		&\leq \max_{\substack{ k = 1,\dots,N \\ \star = i,e}} C \ \dfrac{H^2}{h^2} \ \dfrac{\sigma_M^{\star, k}}{\sigma_m^{\star, k}} \left[  \Phi^{\star, k} (H,h) \right]^2 \norm{\wag}{\Omega}^2 		
	\end{align*}
	where we used the definition of projection $\Ed$,
	\begin{equation*}
		\vG - \wG = \left( I - \widetilde{R}_{D, \Gamma}^T \right) \wG = \Ed \wG,
	\end{equation*}
	and an inverse inequality.
	\qed
\end{proof}


\begin{lemma}\label{lemma7.14}
	Set $\vG = T \uG - \uG$, for $\uG \in \widehat{W}_\Gamma$. For $h$ sufficiently small, there exists a positive constant $C>0$ such that for $\uG \in \widehat{W}_\Gamma$,
	\begin{equation*}
		\norm{\vag}{\Omega}^2 \leq C \ K^2 \dfrac{H^4}{h^2} \max_{\substack{ k = 1,\dots,N \\ \star = i,e}} \dfrac{\sigma_M^{\star, k} }{\tau \left( \sigma_m^{\star, k}\right)^2} \left[ \Phi^{\star, k} (H,h) \right]^2 \left[ \Phi^{\star, k} (H,h) -1 \right]  \normBgamma{\uG}^2,
	\end{equation*}
	with $K$, $\Phi$ defined in Equations (\ref{def: K and Phi}).
\end{lemma}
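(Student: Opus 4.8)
The plan is to reduce the estimate entirely to the two $L^2$–bounds already available, Lemma~\ref{lemma7.13} and Lemma~\ref{lemma7.12}, so that no new technical inequality is needed; the only real work is an algebraic rewriting of $\vG = T\uG - \uG$ in terms of the scaled restriction operators.

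First I would adopt the notation of the lower–bound lemmas: putting $\wG = \tilSe^{-1}\tilRdG S_\Gamma \uG \in \widetilde{W}_\Gamma$ we have $T\uG = \tilRdG^T \wG$, hence $\vG = \tilRdG^T \wG - \uG$. The key step is to absorb the bare term $\uG$. Since $\uG \in \widehat{W}_\Gamma$ is continuous across the interface, the partition–of–unity identity $\tilRdG^T \tilRG = I$ on $\widehat{W}_\Gamma$ gives $\uG = \tilRdG^T \tilRG \uG$, so that
\begin{equation*}
	\vG = \tilRdG^T\left(\wG - \tilRG\uG\right), \qquad \wG - \tilRG\uG \in \widetilde{W}_\Gamma .
\end{equation*}
This cancellation of the consistent part is precisely what will produce the factor $\Phi^{\star,k}(H,h)-1$ rather than $\Phi^{\star,k}(H,h)$ in the final bound.

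Next I would apply Lemma~\ref{lemma7.13} to the vector $\wG - \tilRG\uG \in \widetilde{W}_\Gamma$ (in the role of its ``$\wG$''). Since the discrete $\mathcal{A}$–harmonic extension is computed subdomain by subdomain, it is linear and commutes with the injection $\widetilde{R}$, so the extension of $\wG - \tilRG\uG$ equals $\wag - \widetilde{R}\uag$; moreover $\uag \in \widehat{W}$, so the truncated $L^2(\Omega)$–norm of $\widetilde{R}\uag$ coincides with that of $\uag$. Hence
\begin{equation*}
	\norm{\vag}{\Omega}^2 \;\leq\; C\,\frac{H^2}{h^2}\max_{\substack{k=1,\dots,N\\ \star=i,e}}\frac{\sigma_M^{\star,k}}{\sigma_m^{\star,k}}\left[\Phi^{\star,k}(H,h)\right]^2 \norm{\wag - \uag}{\Omega}^2 .
\end{equation*}
Now I would invoke Lemma~\ref{lemma7.12} (legitimate for $h$ small enough), which bounds $\norm{\wag-\uag}{\Omega}^2$ by $C\,H^2 K^2 \max_{k,\star}\frac{\Phi^{\star,k}(H,h)-1}{\tau\sigma_m^{\star,k}}\normBgamma{\uG}^2$. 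Substituting and merging the maxima over the common index set gives exactly
\begin{equation*}
	\norm{\vag}{\Omega}^2 \;\leq\; C\,K^2\,\frac{H^4}{h^2}\max_{\substack{k=1,\dots,N\\ \star=i,e}}\frac{\sigma_M^{\star,k}}{\tau\left(\sigma_m^{\star,k}\right)^2}\left[\Phi^{\star,k}(H,h)\right]^2\left[\Phi^{\star,k}(H,h)-1\right]\normBgamma{\uG}^2 ,
\end{equation*}
which is the claimed estimate.

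The two applications of Lemmas~\ref{lemma7.13} and~\ref{lemma7.12} are routine bookkeeping. The step I expect to require the most care is the very first one: justifying $\vG = \tilRdG^T(\wG - \tilRG\uG)$ through the partition–of–unity identity, and the accompanying commutation of the harmonic extension with $\widetilde{R}$ together with the truncated–norm identification. I would also be careful to carry along the ``$h$ sufficiently small'' hypothesis inherited from Lemma~\ref{lemma7.12}, and — if non–constant conductivities are allowed — to note that collapsing the nested maxima into a single one is harmless here since $\Phi^{\star,k}(H,h)$ does not in fact depend on $(k,\star)$.
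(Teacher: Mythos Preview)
Your proposal is correct and follows essentially the same route as the paper: rewrite $\vG = \tilRdG^T(\wG - \tilRG\uG)$ via the partition-of-unity identity $\tilRdG^T\tilRG = I$, then apply Lemma~\ref{lemma7.13} followed by Lemma~\ref{lemma7.12} and combine the resulting factors. Your additional remarks on the commutation of the harmonic extension with $\widetilde{R}$ and on carrying the ``$h$ sufficiently small'' hypothesis are sound and make the argument slightly more explicit than the paper's version.
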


\begin{proof}
	Since $T \uG = \widetilde{R}^T_{D, \Gamma} \wG $ and $\widetilde{R}^T_{D, \Gamma} \widetilde{R}_\Gamma = I$, then
	$
	\vG 
	= \widetilde{R}^T_{D, \Gamma} \left( \wG - \widetilde{R}_\Gamma \uG \right). 		\\
	$
	Therefore, thanks to Lemmas \ref{lemma7.13} and \ref{lemma7.12},
	\begin{align*}
		\norm{\vag}{\Omega}^2 &\leq C \ \dfrac{H^2}{h^2} \max_{\substack{ k = 1,\dots,N \\ \star = i,e}} \dfrac{ \sigma_M^{\star, k} }{\sigma_m^{\star, k}} \left[ \Phi^{\star, k} (H,h)  \right]^2 \norm{\wag - \widetilde{R}_\Gamma \uag}{\Omega}^2 		\\			
		&\leq C \ K^2 \ \dfrac{H^4}{h^2} \max_{\substack{ k = 1,\dots,N \\ \star = i,e}} \dfrac{ \sigma_M^{\star, k} }{\tau \left( \sigma_m^{\star, k} \right)^2 } \left[ \Phi^{\star, k} (H,h)  \right]^2 \left[ \Phi^{\star, k} (H,h) - 1 \right] \normBgamma{\uG}^2.			
	\end{align*}
	\qed
\end{proof}

Finally, we are able to conclude the proof of Theorem \ref{coupledconvergence} by proving the lower bound of Theorem \ref{coupledconvergence}.
By using Lemma \ref{lemma7.3} and \cite[lemmas 7.2 and 7.9]{tu2008balancing},
\begin{align*}
	\langle \uG , \uG \rangle_{B_\Gamma} 
	&= \langle \uG , \uG \rangle_{S_\Gamma}					
	= \uG^T \widetilde{R}^T_\Gamma \widetilde{R}_{D, \Gamma} S_\Gamma \uG 
	= \uG^T \widetilde{R}^T_\Gamma \widetilde{S}_\Gamma \wG \\
	&\leq K \normBtildeGamma{\wG} \normBtildeGamma{\widetilde{R}_\Gamma \uG} 			
	= K \langle \uG , T \uG \rangle_{S_\Gamma}^{1/2}  \langle \uG , \uG \rangle_{B_\Gamma}^{1/2},  		
\end{align*}
which means
\begin{equation}\label{eqlowerbound}
	\langle \uG , \uG \rangle_{B_\Gamma}  \leq K^2 \langle \uG , T \uG \rangle_{S_\Gamma} .
\end{equation}
Since \cite[Lemma 7.2]{tu2008balancing} and $\langle \uG , \uG \rangle_{Z_\Gamma} = 0 $, we obtain
\begin{align*}
	\langle \uG , T \uG \rangle_{S_\Gamma}  
	&= \langle \uG , T \uG \rangle_{B_\Gamma}  	+ \langle \uG , T \uG - \uG \rangle_{Z_\Gamma} .
\end{align*}
Therefore, (\ref{eqlowerbound}) becomes
\begin{equation*}
	\begin{aligned}
		\langle \uG , \uG \rangle_{B_\Gamma} \leq K^2 \langle \uG , T \uG \rangle_{B_\Gamma}  + K ^2\langle \uG , T \uG - \uG \rangle_{Z_\Gamma}.
	\end{aligned}
\end{equation*}
However, thanks to Lemmas \ref{lemma7.4} and \ref{lemma7.14},
\begin{equation*}
	\begin{aligned}
		\langle \uG , T \uG - &\uG \rangle_{Z_\Gamma} \leq K \normBgamma{\uG} \norm{\vag}{\Omega}			\\ 		
		&\leq C \ K^2 \dfrac{H^2}{h} \max_{\substack{ k = 1,\dots,N \\ \star = i,e}} \dfrac{\left[ \sigma_M^{\star, k} \right]^{1/2} }{\tau^{\frac{1}{2}} \sigma_m^{\star, k}} \Phi^{\star, k} (H,h)  \left[  \Phi^{\star, k} (H,h) - 1\right]^{1/2} \langle \uG , \uG \rangle_{B_\Gamma},
	\end{aligned}
\end{equation*}
from which it follows
\begin{equation*}
	\begin{aligned}
		\langle \uG , \uG \rangle_{B_\Gamma} &\leq K^2 \langle \uG , T \uG \rangle_{B_\Gamma} \\ &+ C \ K^4 \dfrac{H^2}{h} \max_{\substack{ k = 1,\dots,N \\ \star = i,e}} \dfrac{\left[ \sigma_M^{\star, k} \right]^{1/2} }{\tau^{\frac{1}{2}} \sigma_m^{\star, k}} \Phi^{\star, k} (H,h)  \left[  \Phi^{\star, k} (H,h) - 1 \right]^{1/2} \langle \uG , \uG \rangle_{B_\Gamma}.
	\end{aligned}
\end{equation*}
In conclusion, we have the lower bound
\begin{equation*}
	c_0 \langle \uG , \uG \rangle_{B_\Gamma}  \leq K^2 \langle \uG , T \uG \rangle_{B_\Gamma},
\end{equation*}
where
\begin{equation*}
	c_0 = 1 - K^4 \dfrac{H^2}{h} \max_{\substack{ k = 1,\dots,N \\ \star = i,e}} \dfrac{\left[ \sigma_M^{\star, k} \right]^{1/2} }{\tau^{\frac{1}{2}} \sigma_m^{\star, k}} \Phi^{\star, k} (H,h)  \left[  \Phi^{\star, k} (H,h) - 1 \right]^{1/2}.
\end{equation*}

\section{Parallel numerical tests} \label{parallel tests}

We now validate our theoretical results with several parallel numerical experiments, performed on the supercomputer Galileo from the Cineca centre (\hyperlink{https://www.hpc.cineca.it/}{www.hpc.cineca.it}), a Linux Infiniband cluster equipped with 1084 nodes, each with 36 2.30 GHz Intel Xeon E5-2697 v4 cores and 128 GB/node, for a total of 28 804 cores. Our C code is based on the PETSc library \cite{balay2019petsc} from the Argonne National Laboratory. 
\\
The numerical tests are run both on a thin Cartesian slab and on a idealized left ventricular geometry, modeled as a truncated ellipsoid (see Fig. \ref{fig_ell_meshgrid}). 

\begin{figure}[!h]
	\centering
	\includegraphics[scale=.25]{ 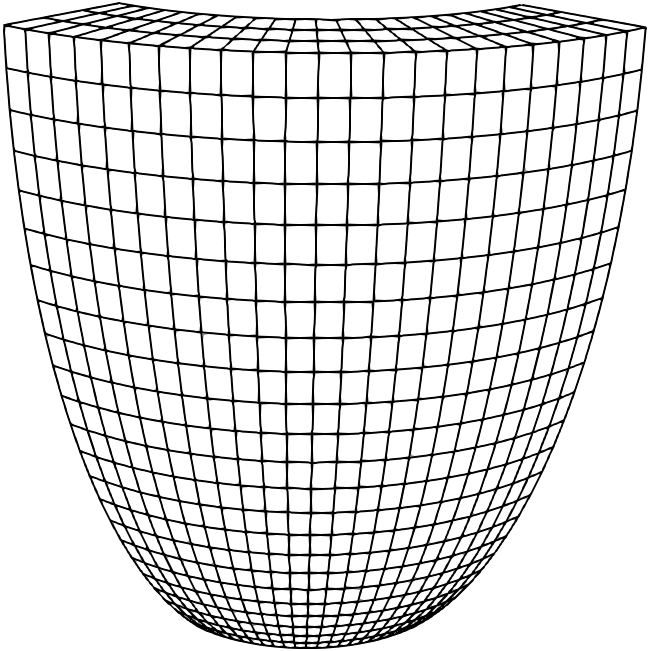} 
	\includegraphics[scale=.25]{ 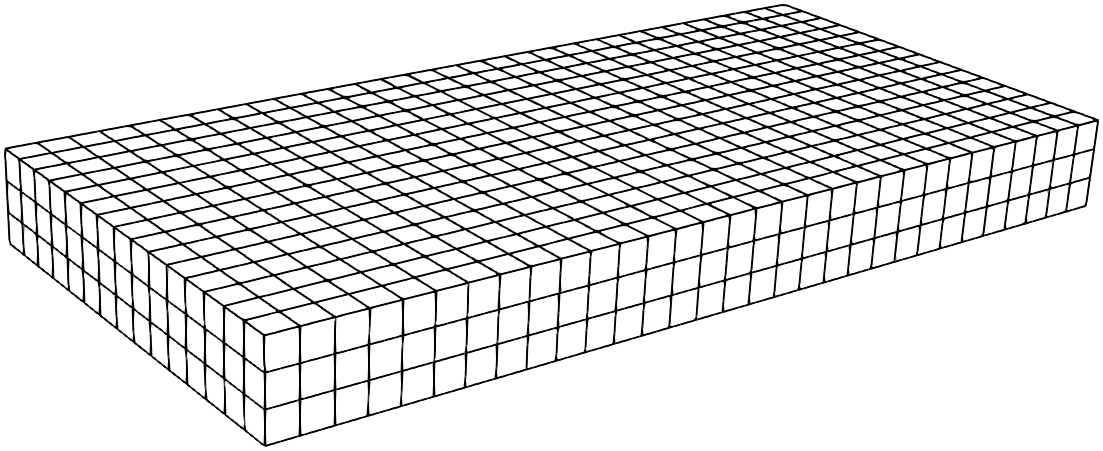} 
	\caption{Portion of left ventricle, idealized as a truncated ellipsoid (top) and slab geometry (bottom). }
	\label{fig_ell_meshgrid}
\end{figure}
The latter is described in ellipsoidal coordinates by the parametric equations
\begin{eqnarray}
	\begin{dcases}
		\mathbf{x} = a (r) \cos \theta \cos \varphi,		&\theta_\text{min} \leq \theta \leq \theta_\text{max},		\nonumber \\
		\mathbf{y} = b (r) \cos \theta \sin \varphi, 		&\varphi_\text{min} \leq \varphi \leq \varphi_\text{max}, 	\nonumber  \\
		\mathbf{z} = c (r) \sin \varphi ,	\qquad			 &0 \leq r \leq 1,		\nonumber 
	\end{dcases}
\end{eqnarray}
where $a (r)  = a_1 + r (a_2 - a_1)$, $b (r)  = b_1 + r (b_2 - b_1)$ and $c (r)  = c_1 +r (c_2 - c_1)$ with $a_{1,2}$, $b_{1,2}$ and $c_{1,2}$ given coefficients defining the main axes of the ellipsoid.  
\\
We assume that fibers rotate intramurally linearly with the depth for a total amount of $120^o$ proceeding counterclockwise from epicardium ($r = 1$, outer surface of the truncated ellipsoid) to endocardium ($r=0$, inner surface). \\

We choose the Roger-McCulloch ionic model for the description of the ionic current flows, whose physiological parameters on Table \ref{ioniccoeff} are taken from \cite{rogers1994collocation}.
\\
Regarding the Bidomain physiological coefficients in Table \ref{bidocoeff}, we refer to \cite{franzone2014mathematical}. \\

\begin{table}[ht]
	\centering
	\caption{Conductivity coefficients for the Roger-McCulloch ionic model.}
	\label{ioniccoeff}
	\begin{tabular}{*{5}{r}}
		\hline
		\multicolumn{5}{c}{Ionic parameters}	\\
		\hline
		G 				 &$1.2$ $\Omega^{-1}$ cm$^{-2}$			 &  &$v_{th}$	 &13 mV 	\\
		$\eta_1$	  	&$4.4$ $\Omega^{-1}$ cm$^{-1}$		  & &$v_p$		  &100 mV \\
		$\eta_2$  	 &$0.012$	 											&	&$C_m$ 		&$1$ mF/cm$^2$ \\
		\hline
	\end{tabular}
	
	\vspace*{5mm}
	
	\caption{Conductivity coefficients for the Bidomain model.}
	\label{bidocoeff}
	\begin{tabular}{*{5}{c}}
		\hline
		\multicolumn{5}{c}{Bidomain conductivity coefficients}	\\
		\hline
		$\sigma_l^i$		  & $3 \times 10^{-3} \Omega^{-1} \text{ cm}^{-1}$ 				& & $\sigma_l^e$		& $2 \times 10^{-3} \Omega^{-1} \text{ cm}^{-1}$	\\
		$\sigma_t^i$		 & $3.1525 \times 10^{-4} \Omega^{-1} \text{ cm}^{-1}$		& &$\sigma_t^e$	    & $1.3514 \times 10^{-3} \Omega^{-1} \text{ cm}^{-1}$	\\
		$\sigma_n^i$   		& $3.1525 \times 10^{-5} \Omega^{-1} \text{ cm}^{-1}$	    &  &$\sigma_n^e$	   & $6.757 \times 10^{-4} \Omega^{-1} \text{ cm}^{-1}$	\\
		\hline
	\end{tabular}
\end{table}
In order to produce the propagation of the electric potential, we apply for $1$ ms to the surface of the domain representing the endocardium an external stimulus of $\Iapp = 100$ mA/cm$^3$. Instead, on the slab geometry, the current is applied in one corner of the domain, over a spheric volume of radius $0.1$ cm. \\ 
We report in Figures \ref{fig_bido_snapshots_v_multi} and \ref{fig_bido_snapshots_ue_multi} the time evolution of the transmembrane and extra-cellular potentials respectively from the endocardial side of the portion of the truncated ellipsoid. The external stimulus $\Iapp$ is applied on five different sites on the endocardium layer, positioned at the apex of the idealized left ventricle. \\
%
The boundary conditions are for an insulated tissue, while the initial conditions represent a resting potential.
The time step is fixed $\tau = 0.05$ ms. \\
%
We adopt a classical implementation of the Newton method, with residual stopping criterion with tolerance $10^{-4}$.
The non-symmetric linear system arising from the discretization of the Jacobian problem at each Newton step is solved with the Generalized Minimal Residual (GMRES) method, preconditioned by the BDDC preconditioner (included in the PETSC library) and the {\it Boomer} Algebraic MultiGrid (bAMG, from the Hypre library \cite{falgout2002hypre}).   

\begin{figure}[H]
	\caption{Snapshots (every 5 ms) of transmembrane potential $v$ time evolution. For each time frame, we report the epicardial view of a portion of the left ventricle, modeled as a truncated ellipsoid. }
	\begin{center}
		\begin{multicols}{3}
			t = 10 ms 	\vspace*{2mm} \\	\includegraphics[scale=.17]{  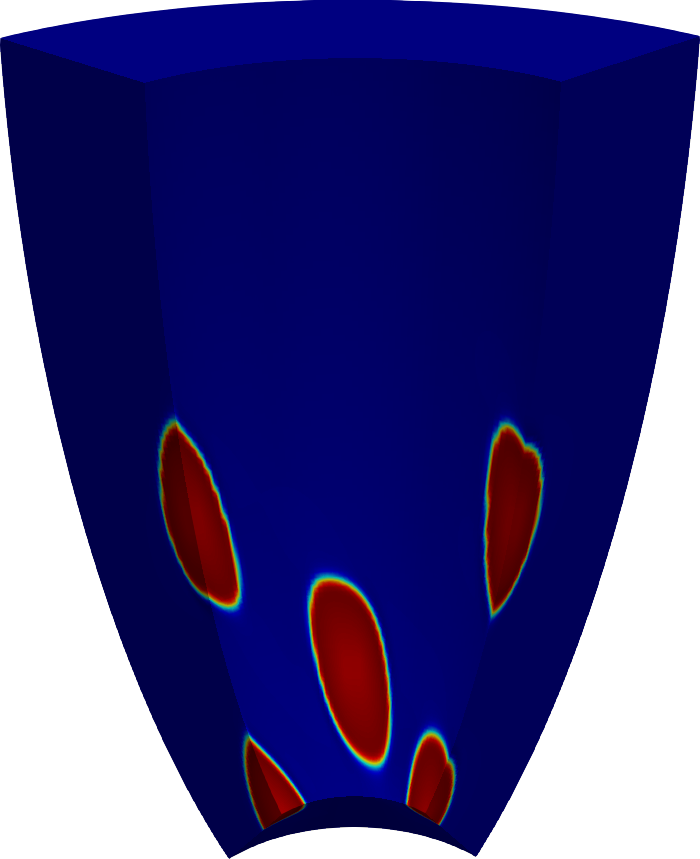} \\	\vspace*{2mm}
			t = 25 ms 	\vspace*{2mm} \\	\includegraphics[scale=.17]{  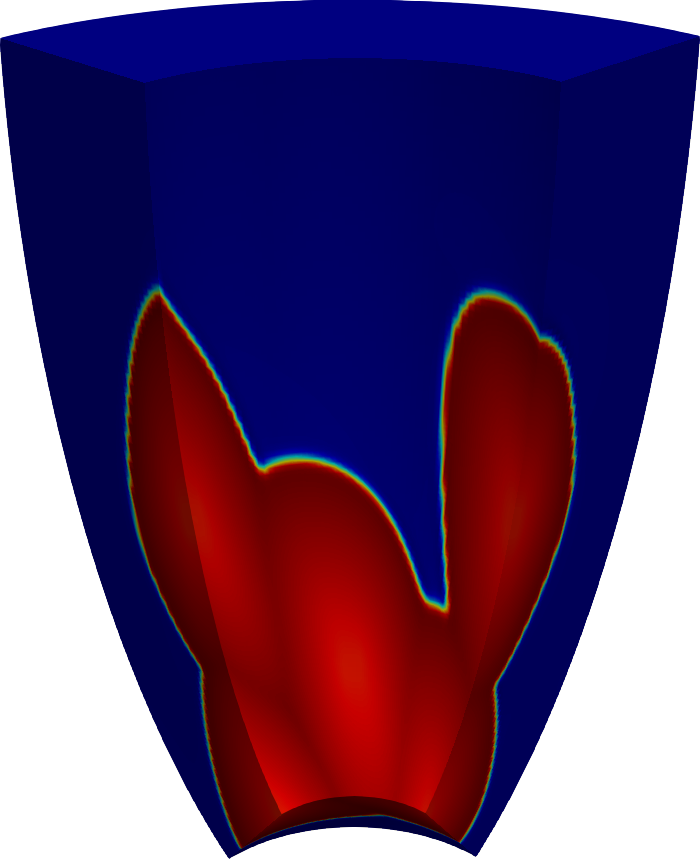} \\	\vspace*{2mm}
			t = 40 ms 	\vspace*{2mm} \\	\includegraphics[scale=.17]{  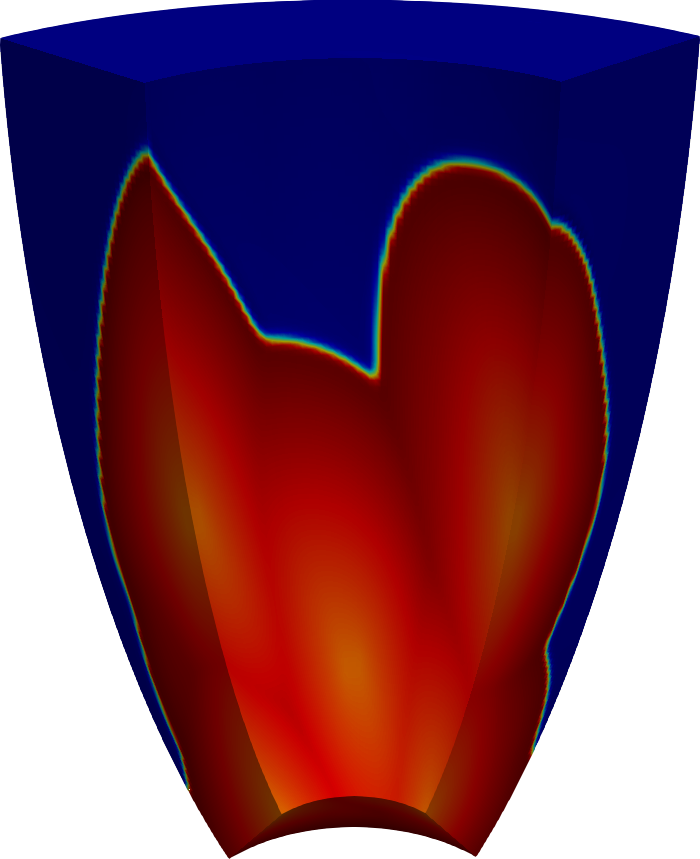} \\
			\columnbreak
			t = 15 ms 	\vspace*{2mm} \\	\includegraphics[scale=.17]{  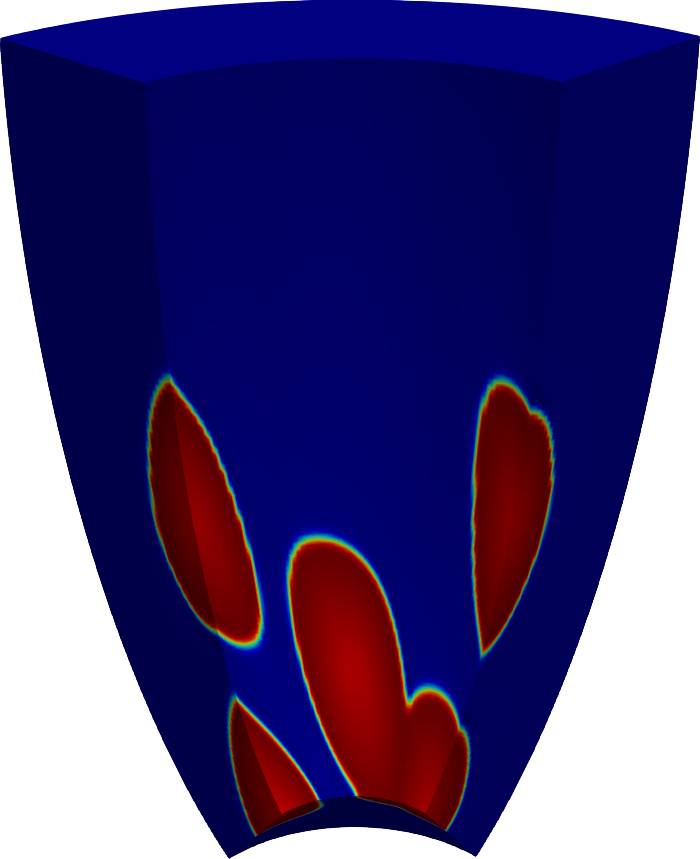} \\	\vspace*{2mm}
			t = 30 ms 	\vspace*{2mm} \\	\includegraphics[scale=.17]{  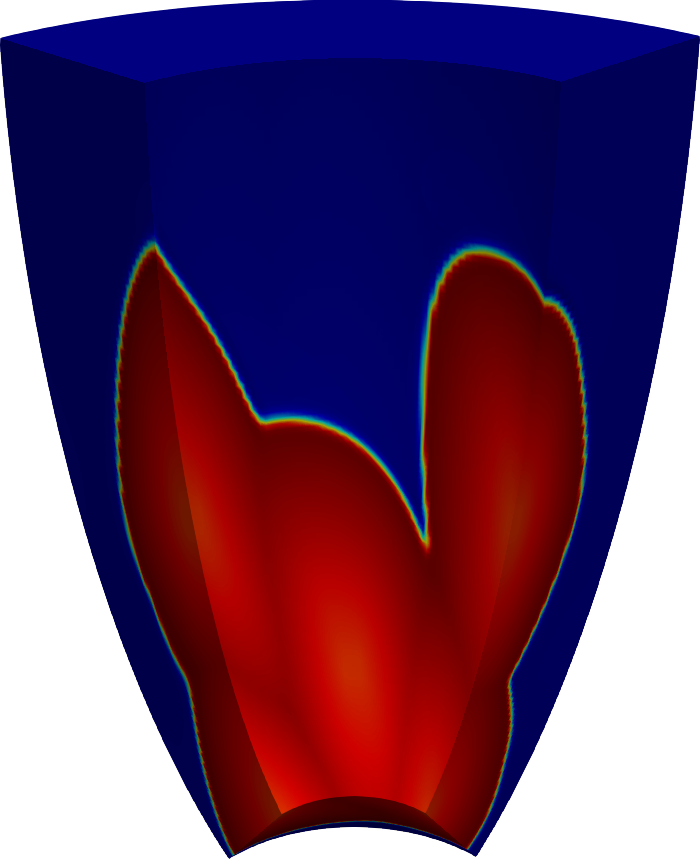} \\	\vspace*{2mm}
			t = 45 ms 	\vspace*{2mm} \\	\includegraphics[scale=.17]{  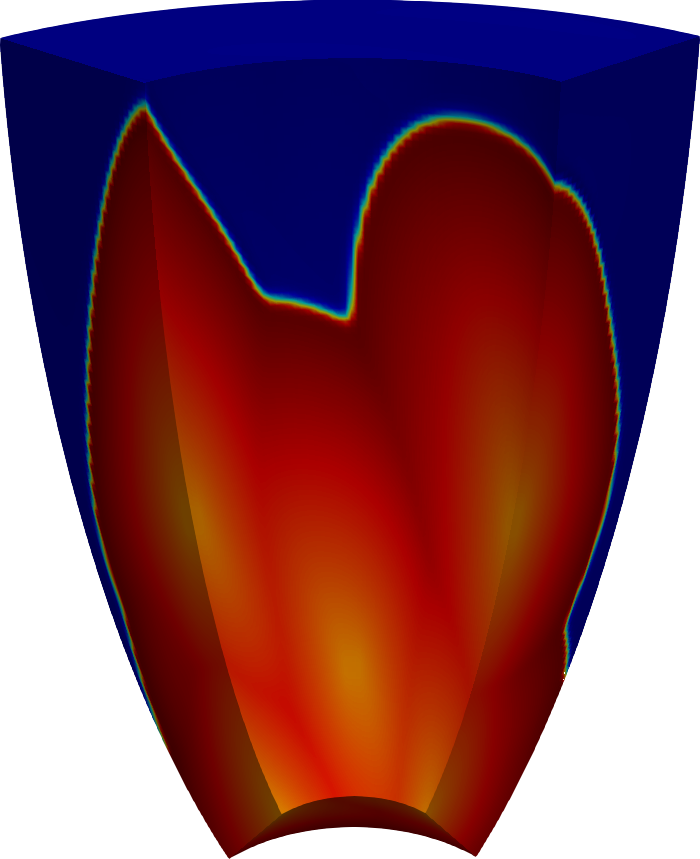} \\
			\columnbreak
			t = 20 ms 	\vspace*{2mm} \\	\includegraphics[scale=.17]{  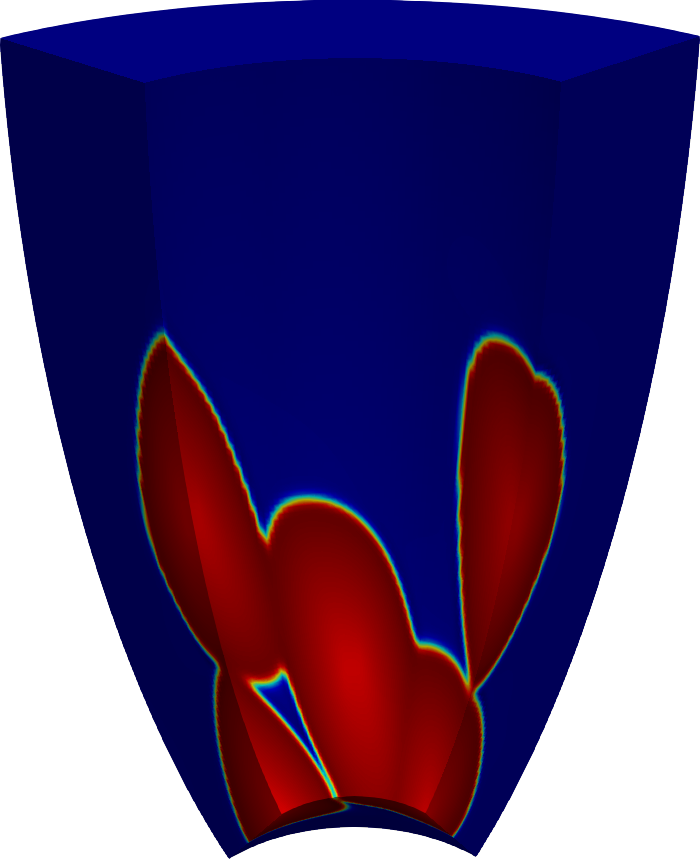} \\	\vspace*{2mm}
			t = 35 ms 	\vspace*{2mm} \\	\includegraphics[scale=.17]{  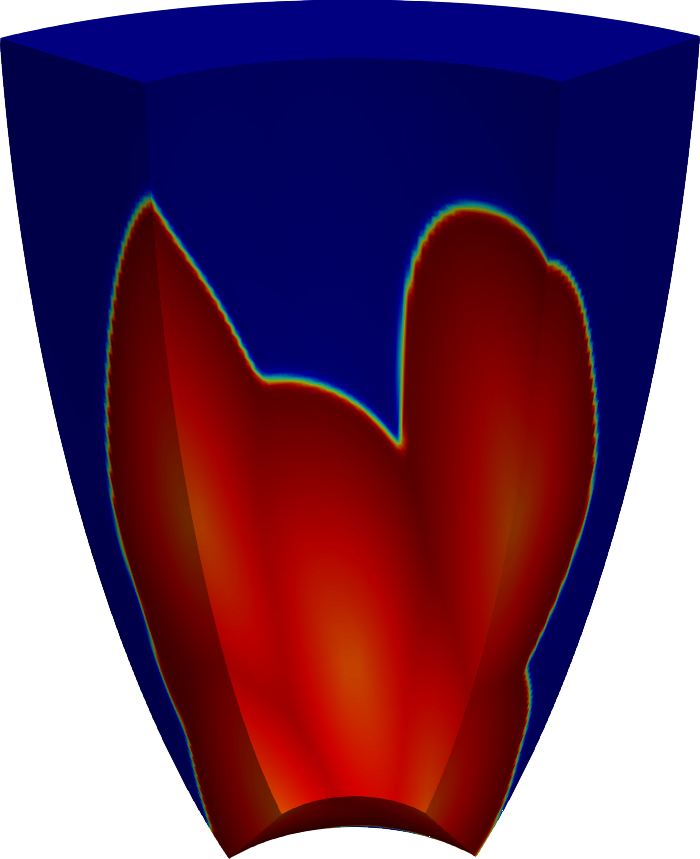} \\	\vspace*{2mm}
			t = 50 ms 	\vspace*{2mm} \\	\includegraphics[scale=.17]{  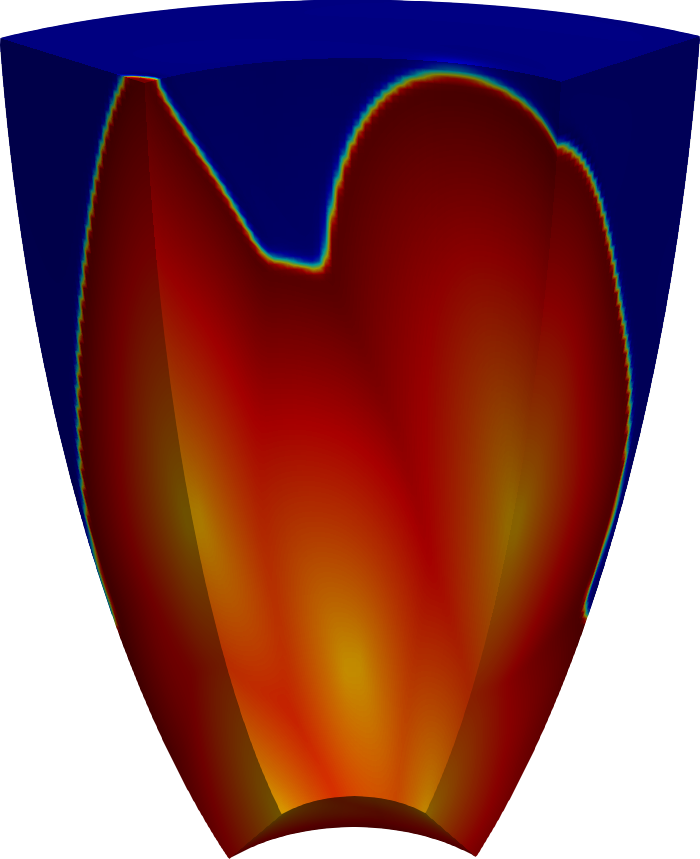}
		\end{multicols}
		\includegraphics[scale=.55]{  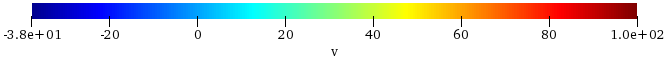}
	\end{center}
	\label{fig_bido_snapshots_v_multi}
\end{figure}

\begin{figure}[H]
	\caption{Snapshots (every 5 ms) of extra-cellular potential $u_e$ time evolution. For each time frame, we report the epicardial view of a portion of the left ventricle, modeled as a truncated ellipsoid. }
	\begin{center}
		\begin{multicols}{3}
			t = 10 ms 	\vspace*{2mm} \\	\includegraphics[scale=.17]{  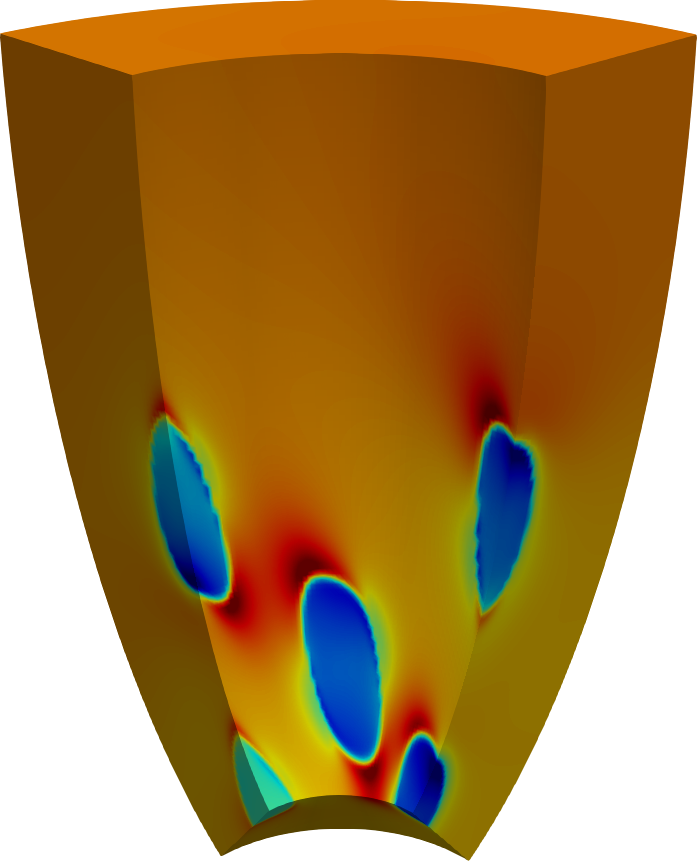} \\	\vspace*{2mm}
			t = 25 ms 	\vspace*{2mm} \\	\includegraphics[scale=.17]{  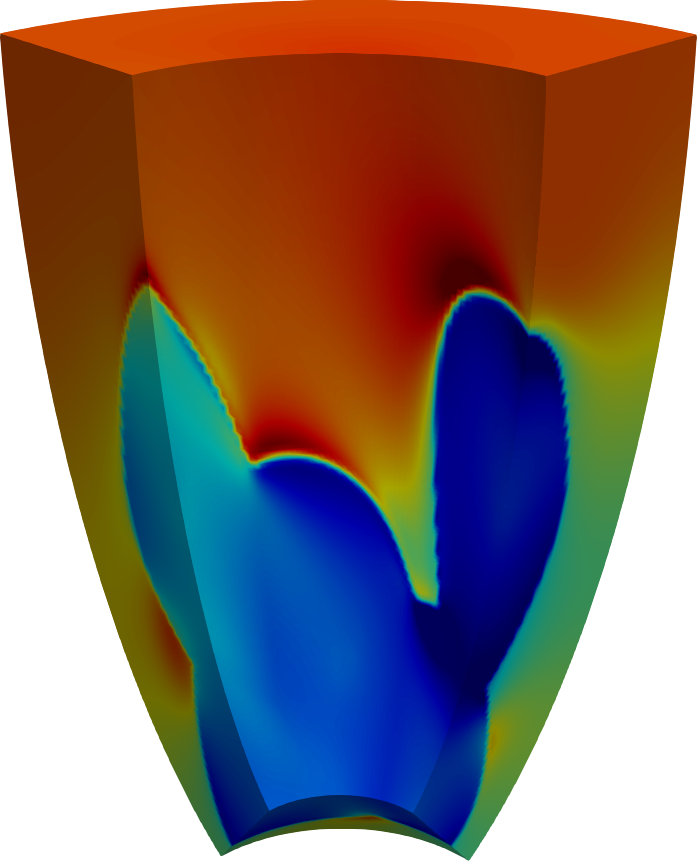} \\	\vspace*{2mm}
			t = 40 ms 	\vspace*{2mm} \\	\includegraphics[scale=.17]{  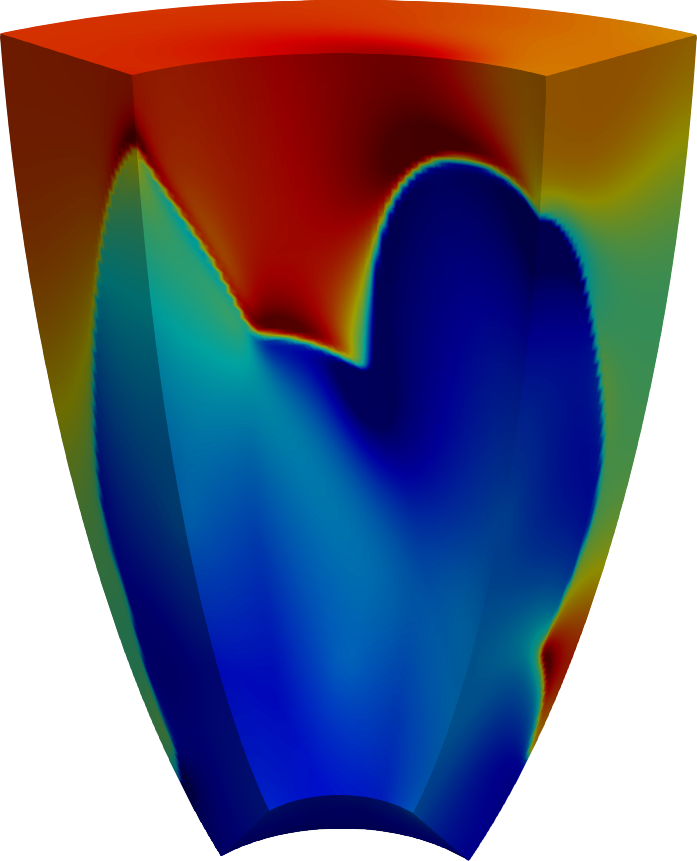} \\
			\columnbreak
			t = 15 ms 	\vspace*{2mm} \\	\includegraphics[scale=.17]{  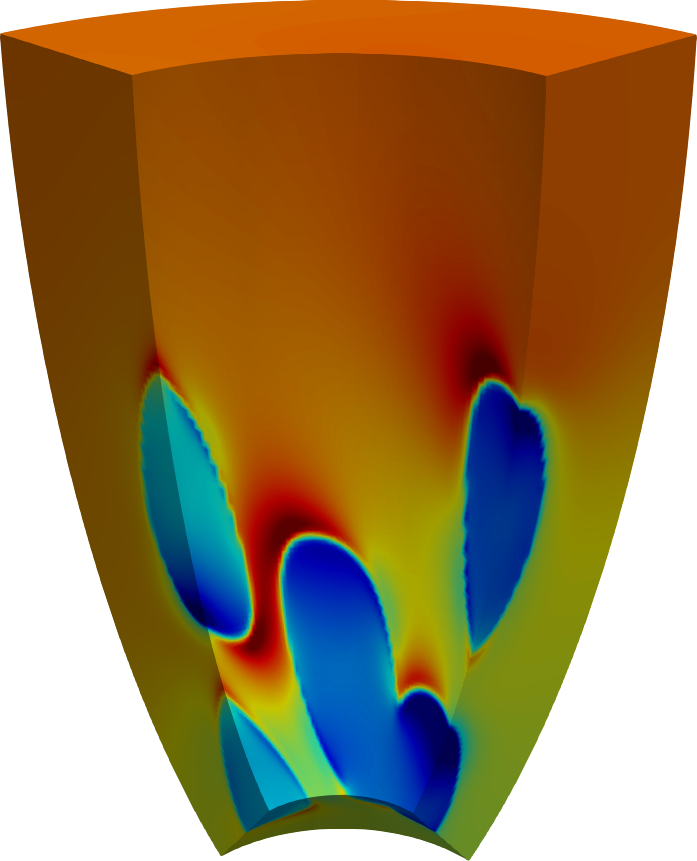} \\	\vspace*{2mm}
			t = 30 ms 	\vspace*{2mm} \\	\includegraphics[scale=.17]{  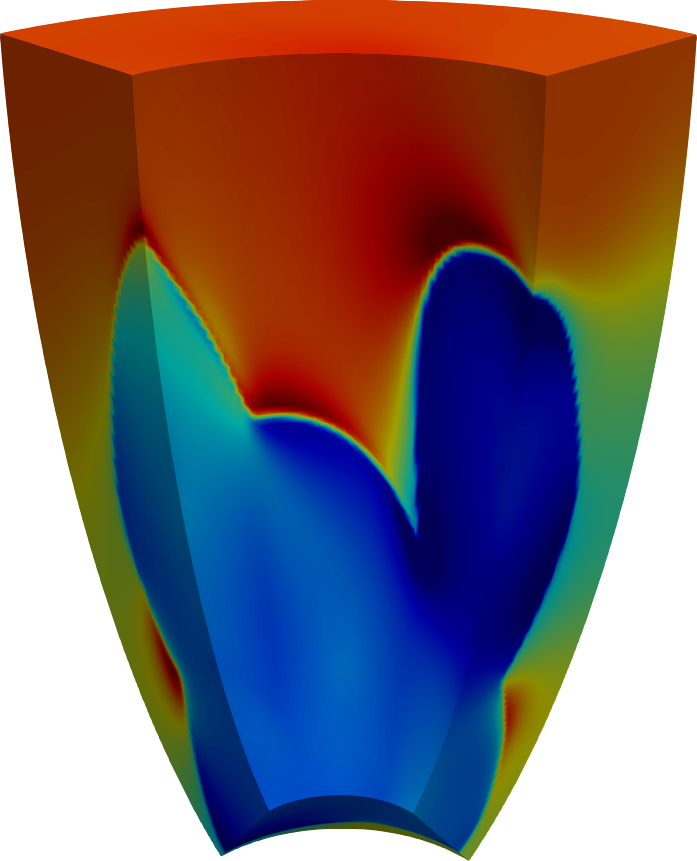} \\	\vspace*{2mm}
			t = 45 ms 	\vspace*{2mm} \\	\includegraphics[scale=.17]{  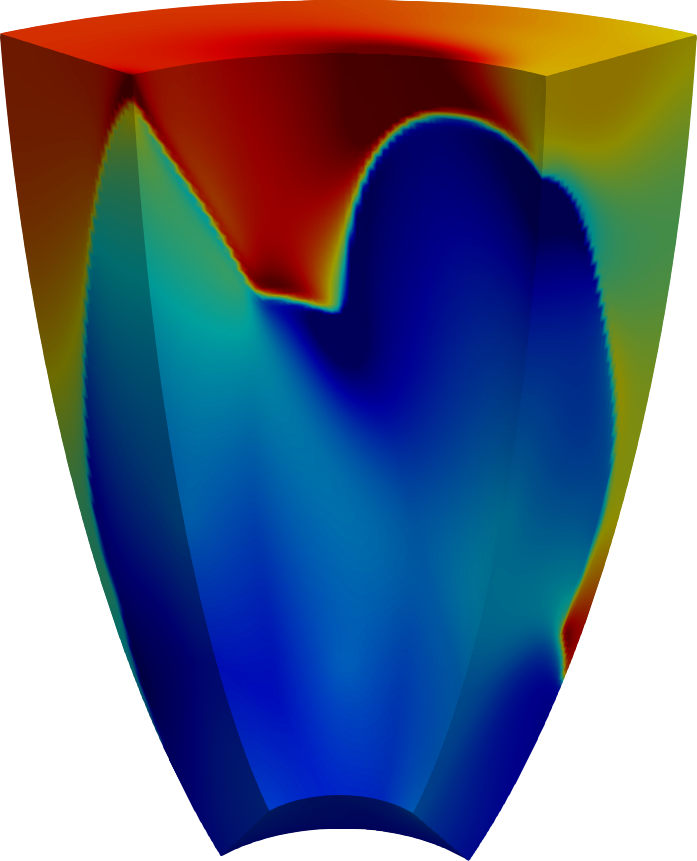} \\
			\columnbreak
			t = 20 ms 	\vspace*{2mm} \\	\includegraphics[scale=.17]{  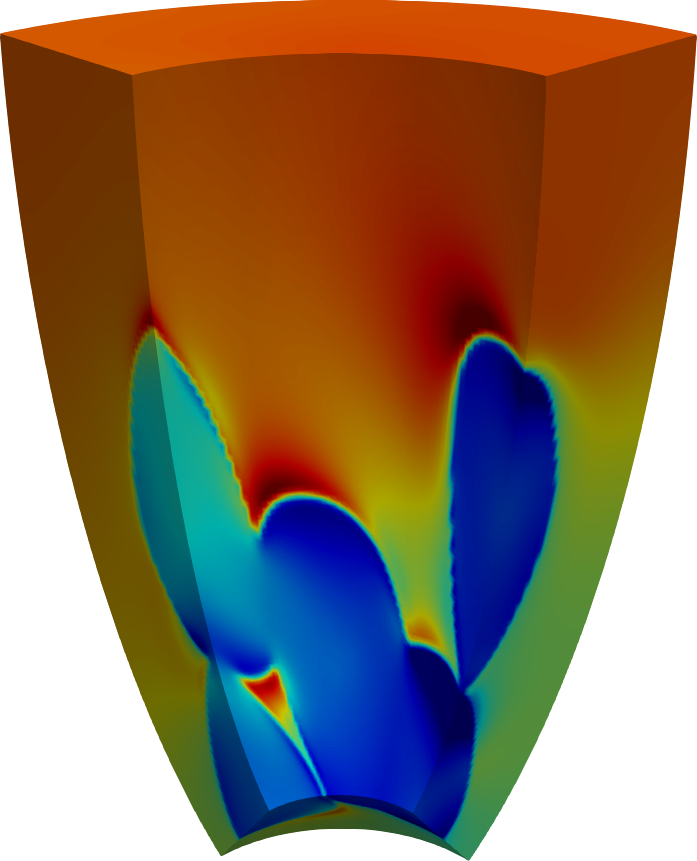} \\	\vspace*{2mm}
			t = 35 ms 	\vspace*{2mm} \\	\includegraphics[scale=.17]{  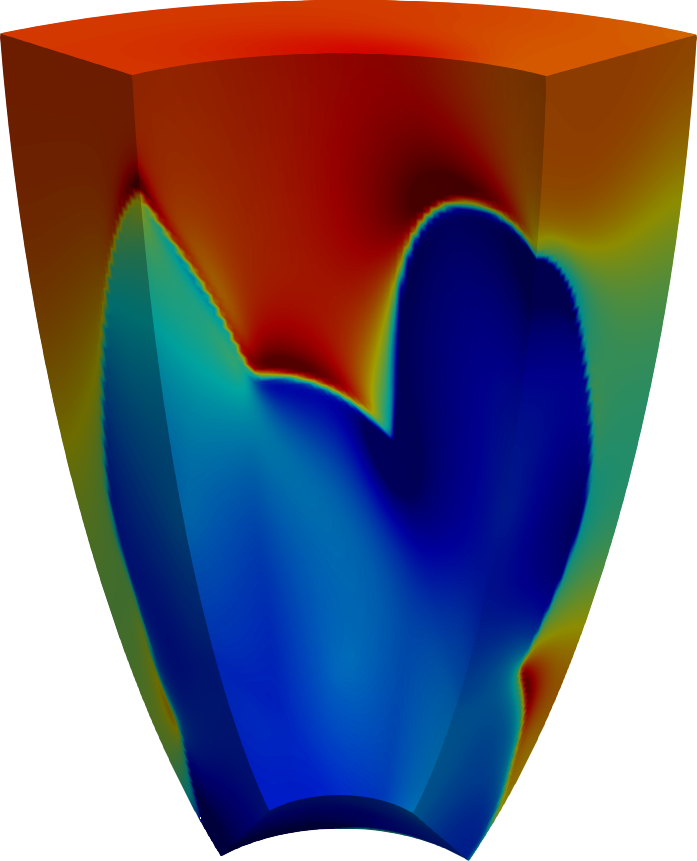} \\	\vspace*{2mm}
			t = 50 ms 	\vspace*{2mm} \\	\includegraphics[scale=.17]{  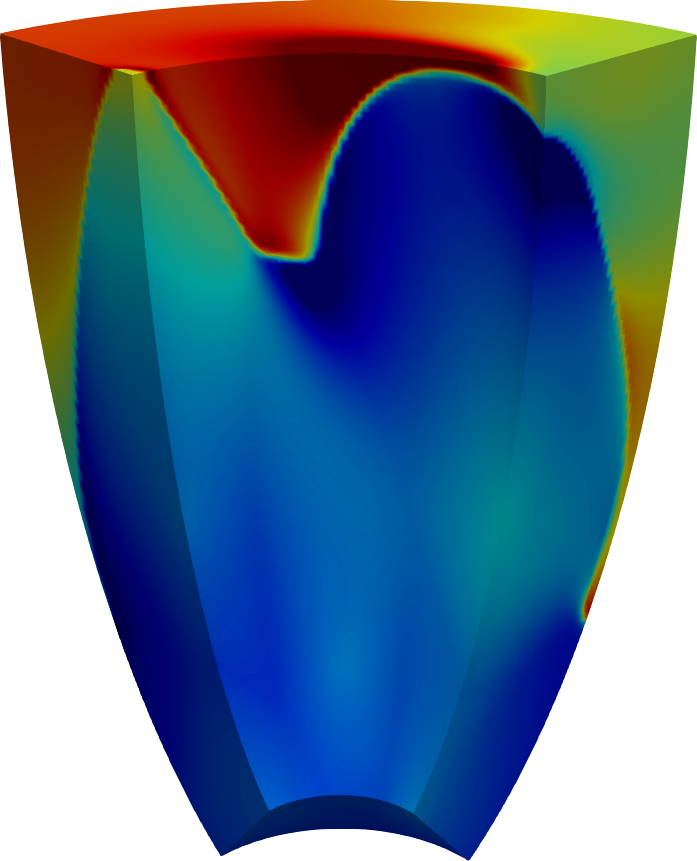}
		\end{multicols}
		\includegraphics[scale=.55]{  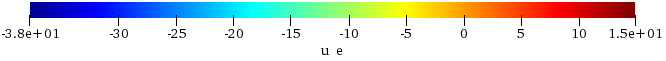}
	\end{center}
	\label{fig_bido_snapshots_ue_multi}
\end{figure}

{\bf Test 1: weak scaling.}
The first set of tests we report here is a weak scaling test on both slab and ellipsoidal domains. For both cases, we fix the local mesh size to $12\cdot 12\cdot 12$ and we increase the number of subdomains from $32$ to $256$, thus resulting in an increasing slab geometry and in an increasing portion of ellipsoid. In this way, the dofs are increasing from $180$k up to $1$ million and a half.
From Tables \ref{table_bido_weak_slab_coupled} and \ref{table_bido_weak_ell_coupled}, it is evident how the dual-primal algorithm has a better performance respect to the bAMG: the average number of linear iteration per Newton iteration (lit) does not increase with the number of subdomains and is clearly lower.
As a matter of fact, for the slab geometry we can observe an increasing reduction rate from $85\%$ up to $93\%$ for the average linear iterations, while for the ellipsoid it varies between $65\%$ and $90\%$.
In contrast, BDDC's average computational time is higher due to the need of communication between the implemented structures. 

\begin{table}[H]
	\caption{Weak scaling test.  Slab domain, local mesh of $12\cdot 12\cdot 12$ elements. Simulations of 2 ms of cardiac activation with $dt = 0.05$ ms (40 time steps). Comparison of Newton-Krylov solvers preconditioned by bAMG and BDDC. Average Newton iterations per time step (nit); average GMRES iterations per Newton iteration (lit); average CPU solution time per time step (time) in seconds.}
	\label{table_bido_weak_slab_coupled}
	\centering
	\begin{tabular}{*{13}{r}}
		\hline
		\multirow{2}{*}{subds.}	&& \multirow{2}{*}{global mesh}				&& \multirow{2}{*}{dofs}			&& \multicolumn{3}{c}{bAMG}	&& \multicolumn{3}{c}{BDDC}	\\
		&& 				&& 		&& nit 	& lit	& time		&& nit	& lit	& time		\\
		\cline{1-1} \cline{3-3} \cline{5-5} \cline{7-9} \cline{11-13} 
		32		&& $48\cdot 48\cdot 24$  	 && 180,075		&& 1	& 100	& 0.6		&& 1	& 16	& 3.2		\\
		64		&& $96\cdot 48\cdot 24$	 	&& 356,475		&& 1	& 127	& 0.9		&& 1	& 16	& 3.4		\\
		128		&& $96\cdot 96\cdot 24$		&& 705,675		&& 1	& 168	& 1.3		&& 1	&17		& 3.4		\\
		256		&& $192\cdot 96\cdot 24$	&& 1,404,075	&& 1	& 243	& 1.9		&& 1	& 17	& 4.3		\\
		\hline
	\end{tabular}
	\vspace*{5mm}
	%
	\caption{Weak scaling test.  Ellipsoidal domain, local mesh of $12\cdot 12\cdot 12$ elements. Simulations of 2 ms of cardiac activation with $dt = 0.05$ ms (40 time steps). Comparison of Newton-Krylov solvers preconditioned by bAMG and BDDC. Average Newton iterations per time step (nit); average GMRES iterations per Newton iteration (lit); average CPU solution time per time step (time) in seconds.}
	\label{table_bido_weak_ell_coupled}
	\centering
	\begin{tabular}{*{13}{r}}
		\hline
		\multirow{2}{*}{subds.}	&& \multirow{2}{*}{global mesh}				&& \multirow{2}{*}{dofs}			&& \multicolumn{3}{c}{bAMG}	&& \multicolumn{3}{c}{BDDC}	\\
		&& 				&& 		&& nit 	& lit	& time		&& nit	& lit	& time		\\
		\cline{1-1} \cline{3-3} \cline{5-5} \cline{7-9} \cline{11-13} 
		32		&& $48\cdot 48\cdot 24$  	 && 180,075		&& 2	& 142	& 1.5		&& 2	& 45	& 6.8 		\\
		64		&& $96\cdot 48\cdot 24$	 	&& 356,475		&& 2	& 145	& 1.9		&& 2	& 32	& 6.9		\\
		128		&& $96\cdot 96\cdot 24$		&& 705,675		&& 2	& 158	& 2.1		&& 2	&23		& 7.0		\\
		256		&& $192\cdot 96\cdot 24$	&& 1,404,075	&& 2	& 212	& 3.2		&& 2	& 23	& 8.5		\\
		\hline
	\end{tabular}
\end{table}

\begin{figure}[H]
	\centering
	\includegraphics[scale=.42]{ 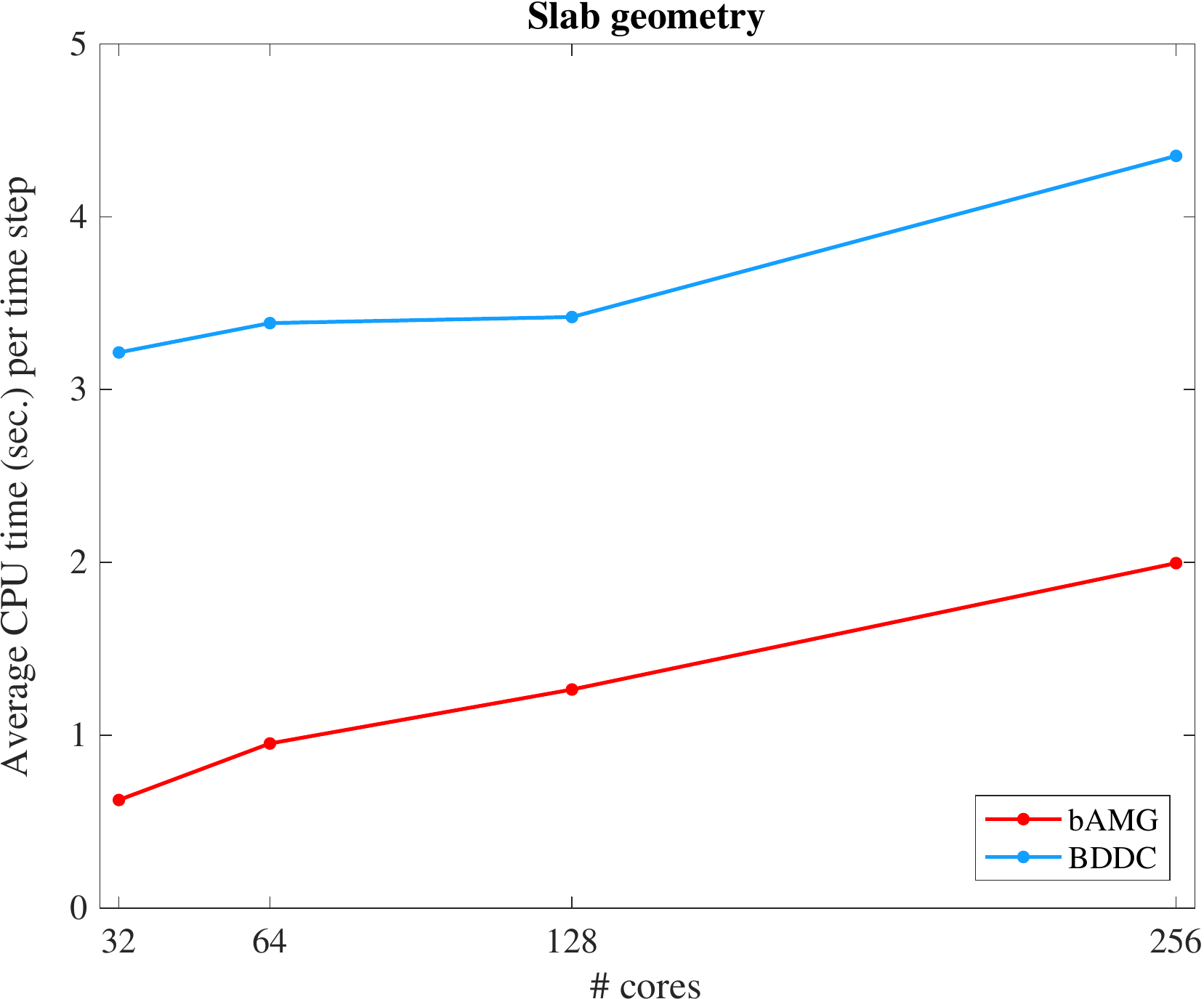} 
	\includegraphics[scale=.42]{ 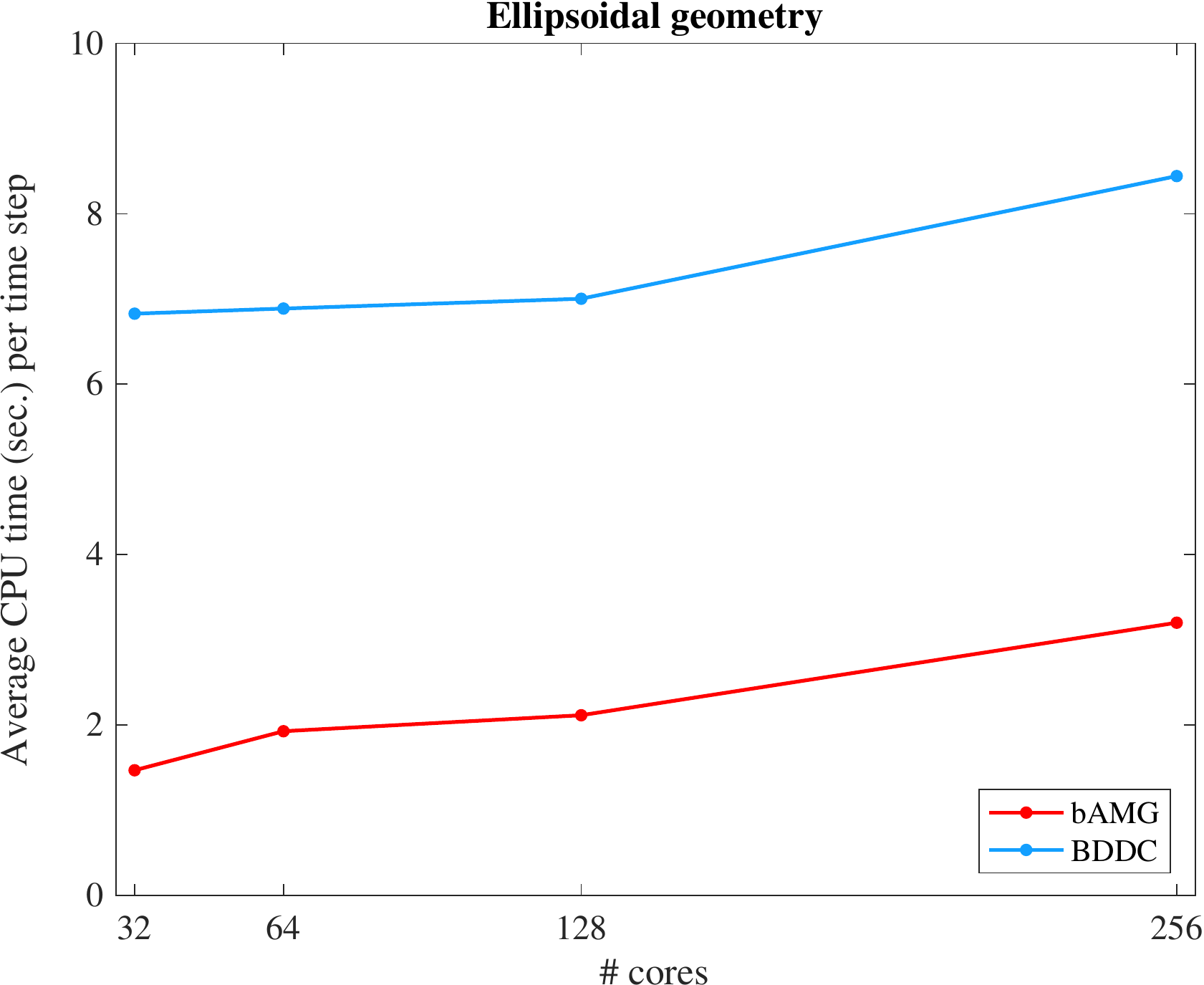}
	\caption{Weak scaling test. Slab (left) and ellipsoidal (right) geometries, local mesh of $12 \cdot 12 \cdot 12$ elements. Simulations of 2 ms of cardiac activation with $dt = 0.05$ ms (40 time steps). Comparison of average CPU time per time step, in seconds.}
	\label{fig_bido_weak_time_coupled}
\end{figure}

{\bf Test 2: strong scaling.} 
We perform a strong scaling test for the two geometries: we fix the global mesh to $128 \cdot 128 \cdot 24$ elements (resulting in more than $3$ millions of dofs) and we increase the number of subdomains from $32$ to $256$.\\
We can observe from Table \ref{table_bido_strong_slab_galileo_coupled} that, as the local number of dofs decreases, the preconditioner with the better balance in term of average linear iterations and CPU time per time step is BDDC: although slightly higher CPU times, the performance is balanced by the average number of linear iterations, which is certainly lower. 
We test the efficiency of the proposed solver on the parallel architecture by computing the parallel speedup $= \frac{T_1}{T_N}$, which is the ratio between the runtime needed by 1 processor ($T_1$) and the average runtime needed by $N$ processors ($T_N$) to solve the problem.
In both cases, BDDC preconditioner does not  reach ideal speedup, while bAMG is sub-optimal (see Fig. \ref{fig_bido_speedup_coupled}).  

\begin{table}[H]
	\caption{Strong scaling test on slab and ellipsoidal domain. Global mesh of $128\cdot 128\cdot 24$ elements, 3,244,995 dofs.	Simulations of 2 ms of cardiac activation with $dt = 0.05$ ms, for a total amount fo 40 time steps. Comparison of Newton-Krylov solvers with bAMG and BDDC preconditioners. Average Newton iterations per time step (nit); average GMRES iterations per Newton iteration (lit); average CPU solution time per time step (time) in seconds; parallel speedup ($S_p$), with ideal speedup in brackets.}
	\label{table_bido_strong_slab_galileo_coupled}
	\centering
	\begin{tabular}{*{11}{r}}
		\multicolumn{11}{c}{Slab domain} \\
		\hline
		\multirow{2}{*}{subds.} 		&& \multicolumn{4}{c}{bAMG}     	&& \multicolumn{4}{c}{BDDC}     	\\
		&& nit   & lit   & time	& $S_p$		&& nit   & lit   & time	& $S_p$		\\
		\cline{1-1} \cline{3-6} \cline{8-11}  
		32     	&& 1	& 183	& 9.5	& -			    && 1	& 20	& 98.7	& -				\\
		64      && 1	& 196	& 5.4	& 1.7 (2)	&& 1	& 23	& 30.8	& 3.2 (2)	 \\
		128     && 1	& 201	& 2.9	& 3.2 (4)	&& 1	& 17	& 10.7	& 9.1 (4)	 \\
		256     && 1	& 232	& 1.9	& 4.9 (8)	&&1	& 19		& 3.7	& 26.3 (8)	 \\
		\hline
		\multicolumn{11}{c}{Ellipsoidal domain} \\
		\hline
		\multirow{2}{*}{subds.} 		&& \multicolumn{4}{c}{bAMG}     	&& \multicolumn{4}{c}{BDDC}     	\\
		&& nit   & lit   & time	& $S_p$		&& nit   & lit   & time	& $S_p$		\\
		\cline{1-1} \cline{3-6} \cline{8-11} 
		32     	&& 2	& 187	& 15.1	& -			    && 2	& 37	& 189.3	& -				\\
		64      && 2	& 222	& 9.2	& 1.6 (2)	&& 2	& 44	& 59.1	& 3.2 (2)	 \\
		128     && 2	& 240	& 5.3	& 2.8 (4)	&& 2	& 29	& 20.1	& 9.4 (4)	 \\
		256     && 2	& 280	& 3.2	& 4.7 (8)	&& 2	& 46	& 10.2	& 18.5 (8)	 \\
		\hline
	\end{tabular}
\end{table}

\begin{figure}[H]
	\centering
	\includegraphics[scale=.43]{ 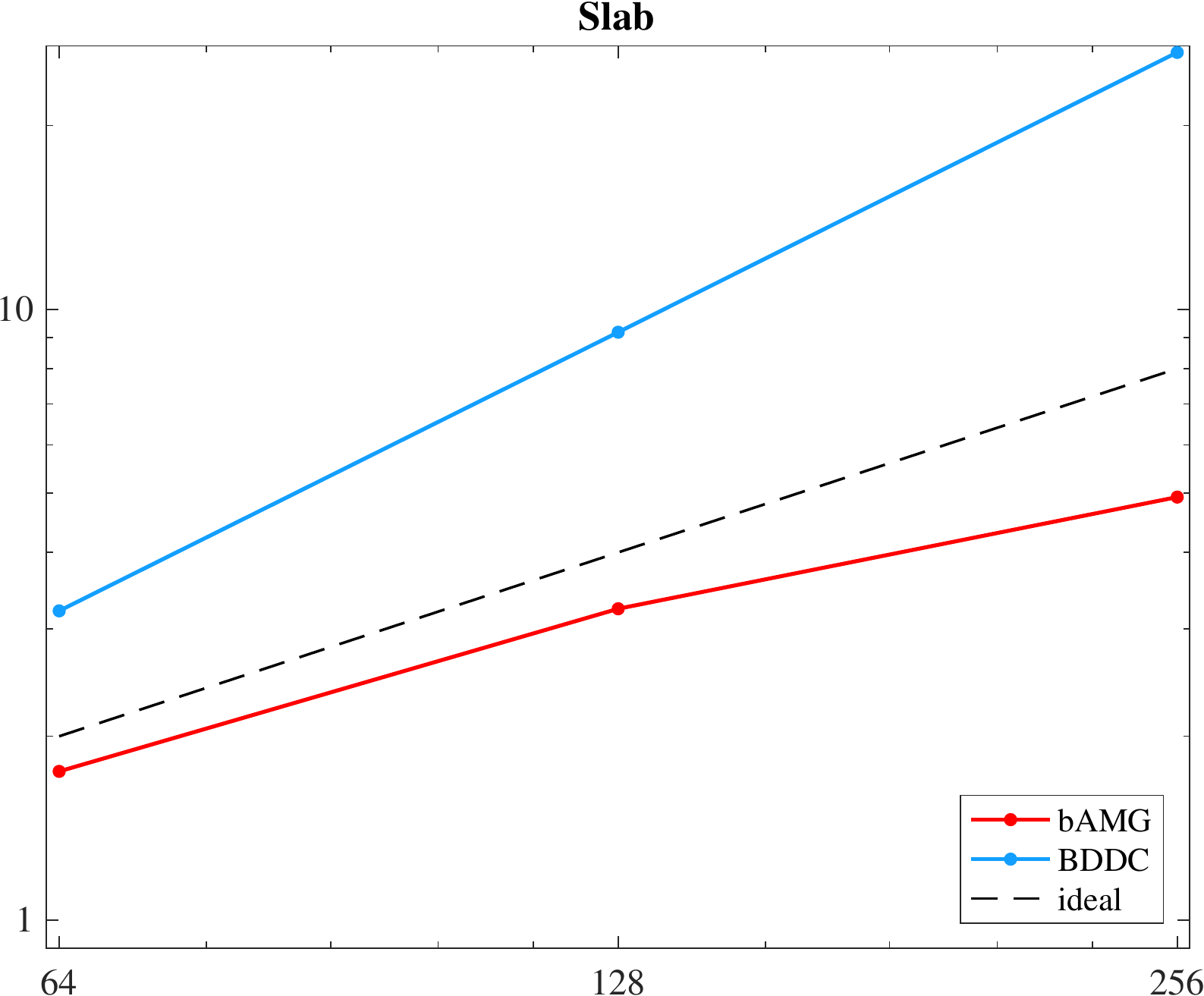}
	\includegraphics[scale=.43]{ 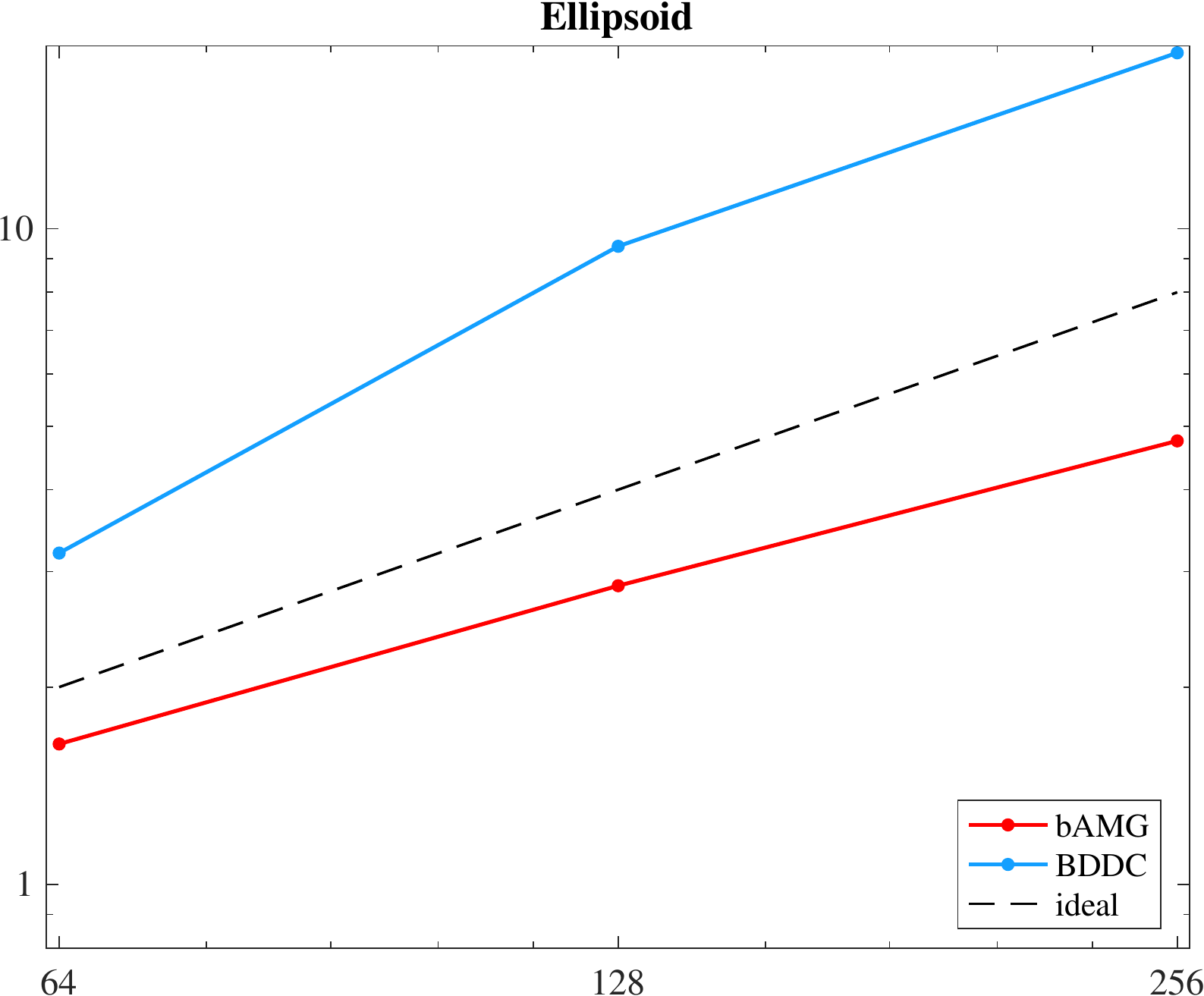}
	\caption{Strong scaling test with global mesh of $128\cdot 128\cdot 24$. Slab (left) and ellipsoidal (right) geometries. Simulations of 2 ms of cardiac activation with $dt = 0.05$ ms, for a total amount fo 40 time steps. Comparison of actual parallel speedup (ideal speedup dotted). }
	\label{fig_bido_speedup_coupled}
\end{figure}

{\bf Test 3: optimality tests.}
Tables \ref{table_bido_opt_slab_coupled} and \ref{table_bido_opt_ell_coupled} report the results of optimality tests, for both slab and ellipsoid geometries, carried on Galileo cluster. 

We fix the number of processors (subdomains) to $4 \cdot 4 \cdot 4$ and we increase the local size $H/h$ from 4 to 24, thus reducing the finite element size $h$. 
We consider both scalings ($\rho$-scaling on top, deluxe scaling at the bottom of each table) and we test the solver for increasing primal spaces: V includes only vertex constraints, V+E includes vertex and edge constraints, and V+E+F includes vertex, edge and face constraints. The deluxe scaling tests are up to a local size of $20 \cdot 20 \cdot 20$ elements, due to limited computational resources.
We consider a time interval of 2 ms during the cardiac activation phase. The time step is $dt = 0.05$ ms, for a total amount fo 40 time steps. 

Similar results hold for both geometries. The deluxe solver seems to be more robust while increasing the local mesh size, both in terms of average linear iterations (see also Figure \ref{fig_bido_opt_coupled} ) and average CPU time per time step. 
\begin{table}[!h]
	\caption{Optimality test.
		Slab domain, $4 \cdot 4 \cdot 4$ subdomains, increasing local size from $4 \cdot 4 \cdot 4$ to $24 \cdot 24 \cdot 24$ (up to $20 \cdot 20 \cdot 20$ for the deluxe scaling). Comparison between different scalings and different primal sets (V = vertices, E = edge averages, F = face averages). Average non-linear iterations (nlit), average number of linear iterations (lit) and average CPU time in seconds per time step.}	
	\label{table_bido_opt_slab_coupled}
	\centering
	\begin{tabular}{*{13}{r}}
		\hline
		\multicolumn{13}{c}{$\rho$-scaling}	\\
		\hline
		\multirow{2}{*}{H/h}	&& \multicolumn{3}{c}{V} 	&& \multicolumn{3}{c}{V+E} 		&& \multicolumn{3}{c}{V+E+F}  \\
		\cline{3-5} \cline{7-9} \cline{11-13}
		&& nlit 	& lit		&time			&& nlit 	& lit		&time			&& nlit 	& lit		&time		\\
		\cline{1-1} \cline{3-5} \cline{7-9} \cline{11-13}
		4			 &&1 &29 &0.2 		 	 	  &&1 &16 &0.1 	 	  &&1 &18 &0.2  \\
		8			 &&1 &48 &0.8 			  	  &&1 &17 &0.7 		  &&1 &16 &0.6  \\
		12			&&1 &65 &4.5 			 	 &&1 &19 &3.4 	 	&&1 &18 &3.6  \\
		16 			&& 1 &77 &20.6 			 	&&1 &21 &15.8  	   &&1 &19 &16.5  \\
		20			&& 1 &99 &70.0 		       &&1 &23 &52.5 	    &&1 &21 &54.4  \\
		24			&& 1 &219 &256.2 		 &&1 &24 &156.4		&&1 &22 &158.6 \\
		\hline
		\multicolumn{13}{c}{deluxe scaling}	\\
		\hline
		\multirow{2}{*}{H/h}	&& \multicolumn{3}{c}{V} 	&& \multicolumn{3}{c}{V+E} 		&& \multicolumn{3}{c}{V+E+F}  \\
		\cline{3-5} \cline{7-9} \cline{11-13}
		&& nlit 	& lit		&time			&& nlit 	& lit		&time			&& nlit 	& lit		&time		\\
		\cline{1-1} \cline{3-5} \cline{7-9} \cline{11-13}
		4			 &&1 &29 &0.3 		 	 	  &&1 &14 &0.2 	 	 &&1 &16 &0.2  \\
		8			 &&1 &46 &1.1 			  	  &&1 &15 &0.6 		 &&1 &15 &0.6  \\
		12			&&1 &59 &5.4 			 	 &&1 &17 &3.2 	 	&&1 &15 &3.1  \\
		16 			&& 1 &67 &21.8 			 	&&1 &17 &13.3  	 &&1 &16 &13.2  \\
		20			&& 1 &73 &66.7 		        &&1 &18 &42.6 	&&1 &17 &42.5 \\
		\hline
	\end{tabular}
	%
	\vspace*{5mm}
	%
	\caption{Optimality test.
		Ellipsoidal domain, $4 \cdot 4 \cdot 4$ subdomains,  increasing local size from $4 \cdot 4 \cdot 4$ to $24 \cdot 24 \cdot 24$ (up to $20 \cdot 20 \cdot 20$ for the deluxe scaling). Comparison between different scalings and different primal sets (V = vertices, E = edge averages, F = face averages). Average non-linear iterations (nlit), average number of linear iterations (lit) and average CPU time in seconds per time step.}	
	\label{table_bido_opt_ell_coupled}
	\centering
	\begin{tabular}{*{13}{r}}
		\hline
		\multicolumn{13}{c}{$\rho$-scaling}	\\
		\hline
		\multirow{2}{*}{H/h}	&& \multicolumn{3}{c}{V} 	&& \multicolumn{3}{c}{V+E} 		&& \multicolumn{3}{c}{V+E+F}  \\
		\cline{3-5} \cline{7-9} \cline{11-13}
		&& nlit 	& lit		&time			&& nlit 	& lit		&time			&& nlit 	& lit		&time		\\
		\cline{1-1} \cline{3-5} \cline{7-9} \cline{11-13}
		4			 &&2 &57 &0.3 		 	 	  &&2 &27 &0.9 	 	 &&- &- &-  \\
		8			 &&2 &113 &1.6 			  	  &&2 &58 &1.4 		 &&2 &56 &1.4  \\
		12			&&2 &139 &8.7 			 	 &&2 &67 &6.4 	 	&&2 &65 &4.5  \\
		16 			&&2 &228 &47.2 			 	&&2 &75 &11.9  	 &&2 &75 &11.6  \\
		20			&&2 &277 &144.6 		  &&2 &82 &34.2 	&&2 &81 &32.5  \\
		24			&&2 &477 &494.5 		 &&2 &75 &83.1 		&&2 &87 &80.5 \\
		\hline
		\multicolumn{13}{c}{deluxe scaling}	\\
		\hline
		\multirow{2}{*}{H/h}	&& \multicolumn{3}{c}{V} 	&& \multicolumn{3}{c}{V+E} 		&& \multicolumn{3}{c}{V+E+F}  \\
		\cline{3-5} \cline{7-9} \cline{11-13}
		&& nlit 	& lit		&time			&& nlit 	& lit		&time			&& nlit 	& lit		&time		\\
		\cline{1-1} \cline{3-5} \cline{7-9} \cline{11-13}
		4			 &&2 &45 &0.3 		 	 	  &&2 &25 &0.3 	 	 &&2 &24 &0.3  \\
		8			 &&2 &86 &1.8 			  	  &&2 &40 &1.2 		 &&2 &40 &1.2  \\
		12			&&2 &118 &9.8 			 	 &&2 &53 &6.9 	 	&&2 &46 &6.3  \\
		16 			&& 2 &138 &39.8 			&&2 &50 &25.1  	 &&2 &51 &25.2  \\
		20			&& 2 &172 &130.8 		   &&2 &60 &81.6 	&&2 &62 &83.5  \\
		\hline
	\end{tabular}
\end{table}
\begin{figure}[!h]
	\centering
	\includegraphics[scale=.42]{ 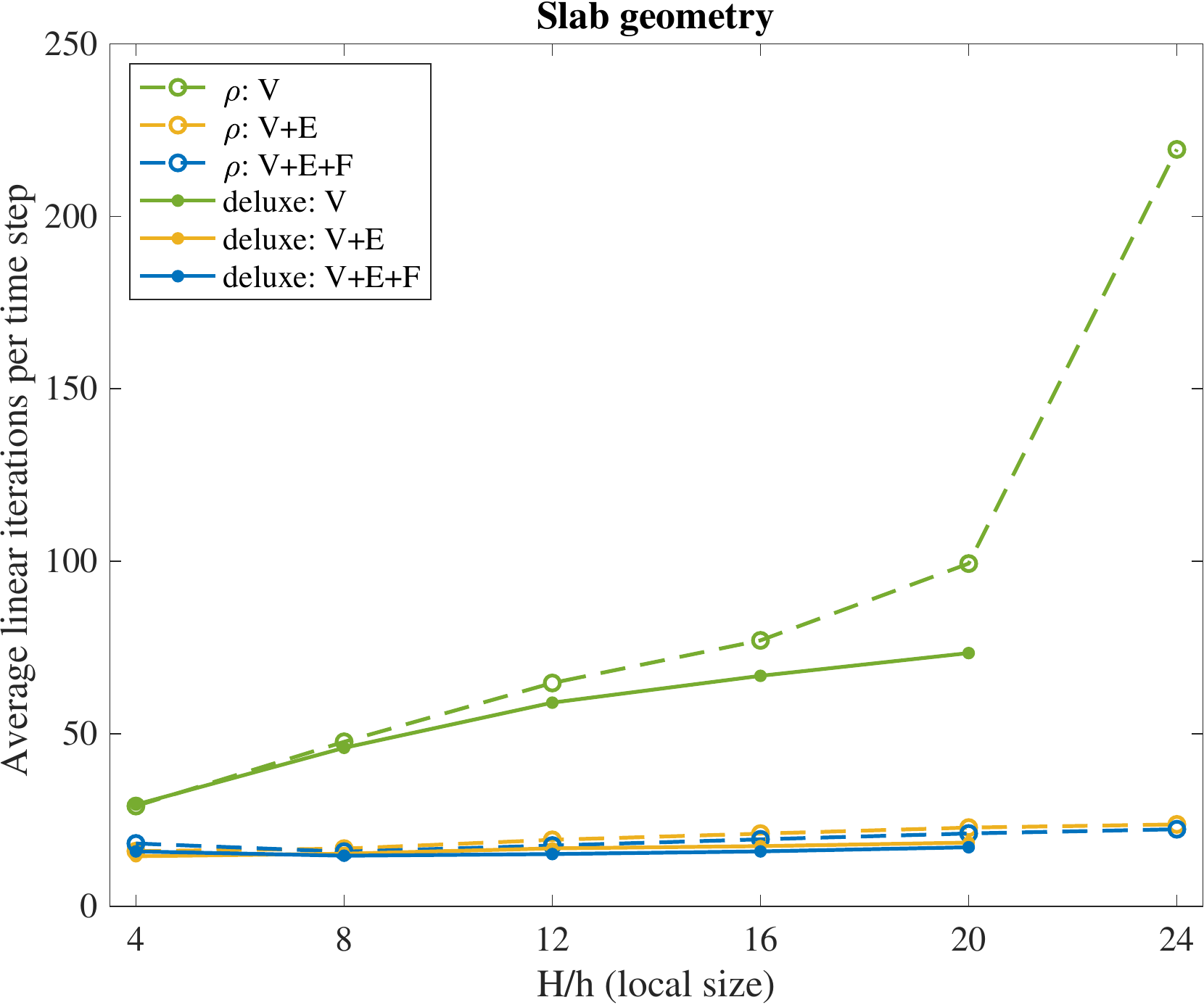}
	\includegraphics[scale=.422]{ 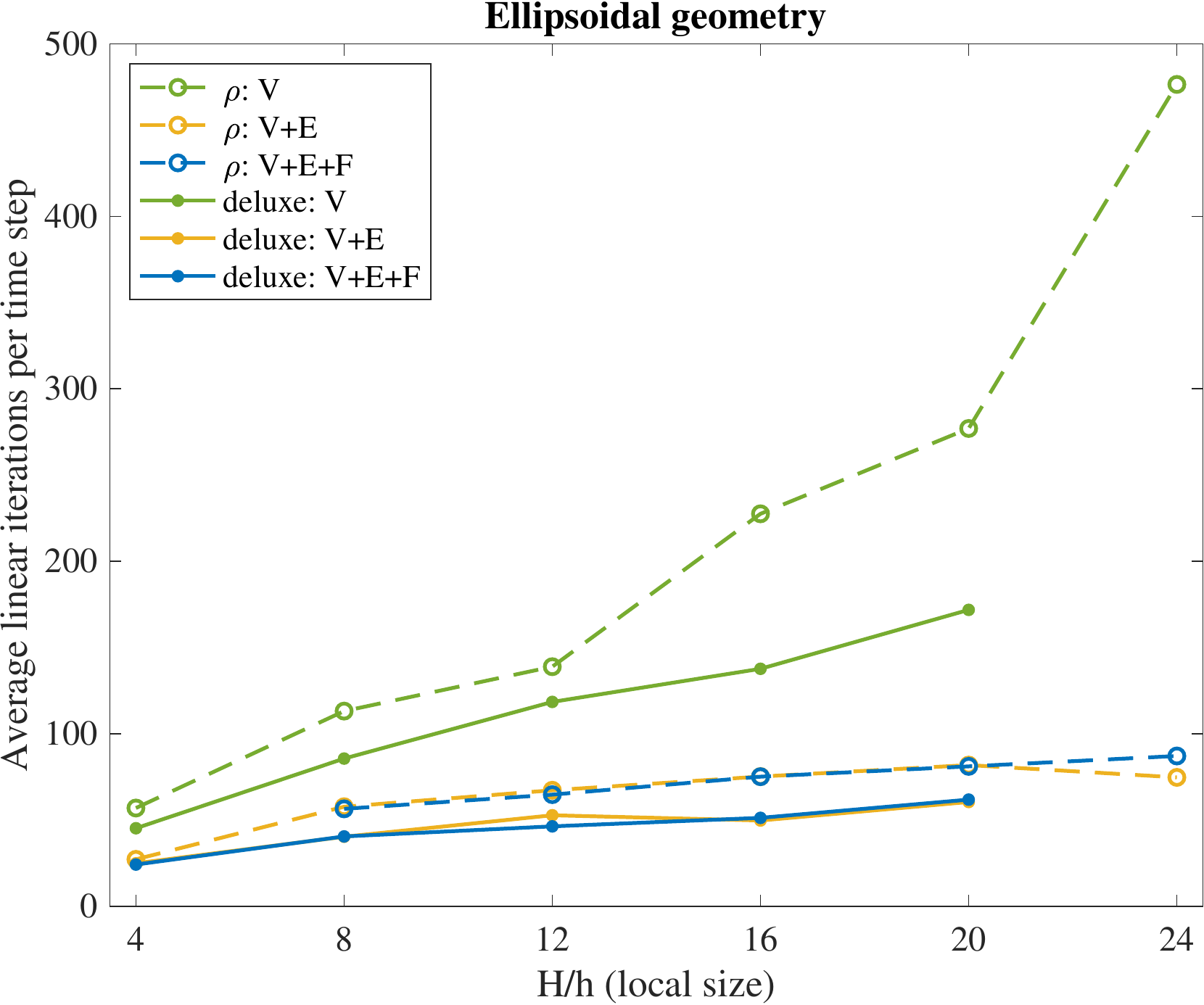}
	\caption{Optimality test with different scalings (dash-dotted $\rho$-scaling, continuous {\em deluxe} scaling) and primal sets (V = vertices, E = edge averages, F = face averages). Slab (left) and ellipsoidal (right) domains, $4 \cdot 4 \cdot 4$ subdomains, increasing local size from $4 \cdot 4 \cdot 4$ to $24 \cdot 24 \cdot 24$.  Average number of linear iterations per time step.}
	\label{fig_bido_opt_coupled}
\end{figure}

{\bf Test 4: heartbeat simulations.} 
In these last tests, we compare the performance of our dual-primal and the multigrid preconditioners during a whole heartbeat.

We fix the number of subdomains to $128=8 \cdot 8 \cdot 2$ and the global mesh size to $128 \cdot 96 \cdot 24$, thus considering local problems of 8,619 dofs.
We consider a time interval of $[0,200]$ ms for a total of 4000 time steps for a portion of ellipsoid defined by $\varphi_{\min} = -\pi/2$, $\varphi_{\max} = \pi/2$, $\theta_{\min} = -3/8 \pi$ and $\theta_{\max} = \pi/8$, while on the slab of dimensions $1.92 \times 0.96 \times 0.96$ cm$^3$ we perform the tests for 3000 time steps, on the time interval $[0, 150]$ ms.

In Fig. \ref{fig_bido_full_coupled} we report the trend of the average number of linear iteration per time step during the simulation. Firstly, we notice a huge difference between the bAMG  and the BDDC preconditioners, with a reduction of more than $85\%$ for the latter. The number of iterations remains bounded during the test. 

Both preconditioned solvers seem to affected by the different stages of the action potential: an initial peak during the activation phase is followed by a constant elevated number of linear iterations as the electric signal propagates in the cardiac tissue, ending with a lower - but always constant - number of linear iterations during the resting phase.

Despite similar qualitative trends between the two geometries, it is undeniable that there are differences from the quantitative point of view, due to the complexity of the domain taken in consideration. 
Comparable performances in terms of CPU time per time step (see Table \ref{table_bido_full_coupled}) hold for both preconditioners.

\begin{table}[!h]
	\caption{Heartbeat simulation on time interval $[0,150]$ ms, 3000 time steps for the slab and on time interval $[0,200]$ ms, 4000 time steps for the ellipsoidal domain. Fixed number of subdomains $8 \cdot 8 \cdot 2$ and fixed global mesh $ 128 \cdot 96 \cdot 24$. Comparison of average Newton steps, average linear iterations and average CPU time (in sec.) per time step.}	
	\label{table_bido_full_coupled}
	\centering
	\begin{tabular}{*{12}{c}}
		\cline{2-12}
		& \multirow{2}{*}{procs}	&& \multirow{2}{*}{dofs} && \multicolumn{3}{c}{bAMG} 	&& \multicolumn{3}{c}{BDDC} 		 \\
		&&&  && nlit 	& lit		&time	&& nlit 	& lit		&time			\\
		\cline{1-2} \cline{4-4} 	\cline{6-8} \cline{10-12} 
		slab		 & 128 		  && 8,619	 && 1 	  &235 	  &3.65 	  &&1 	&34  	&20.66	\\
		ellipsoid 	& 128		&& 8,619 	&& 6 	&1,134 	&9.54		&& 6 	& 96 	& 8.27			\\
		\hline
	\end{tabular}
\end{table}

\begin{figure}[H]
	\centering
	\includegraphics[scale=.4]{ 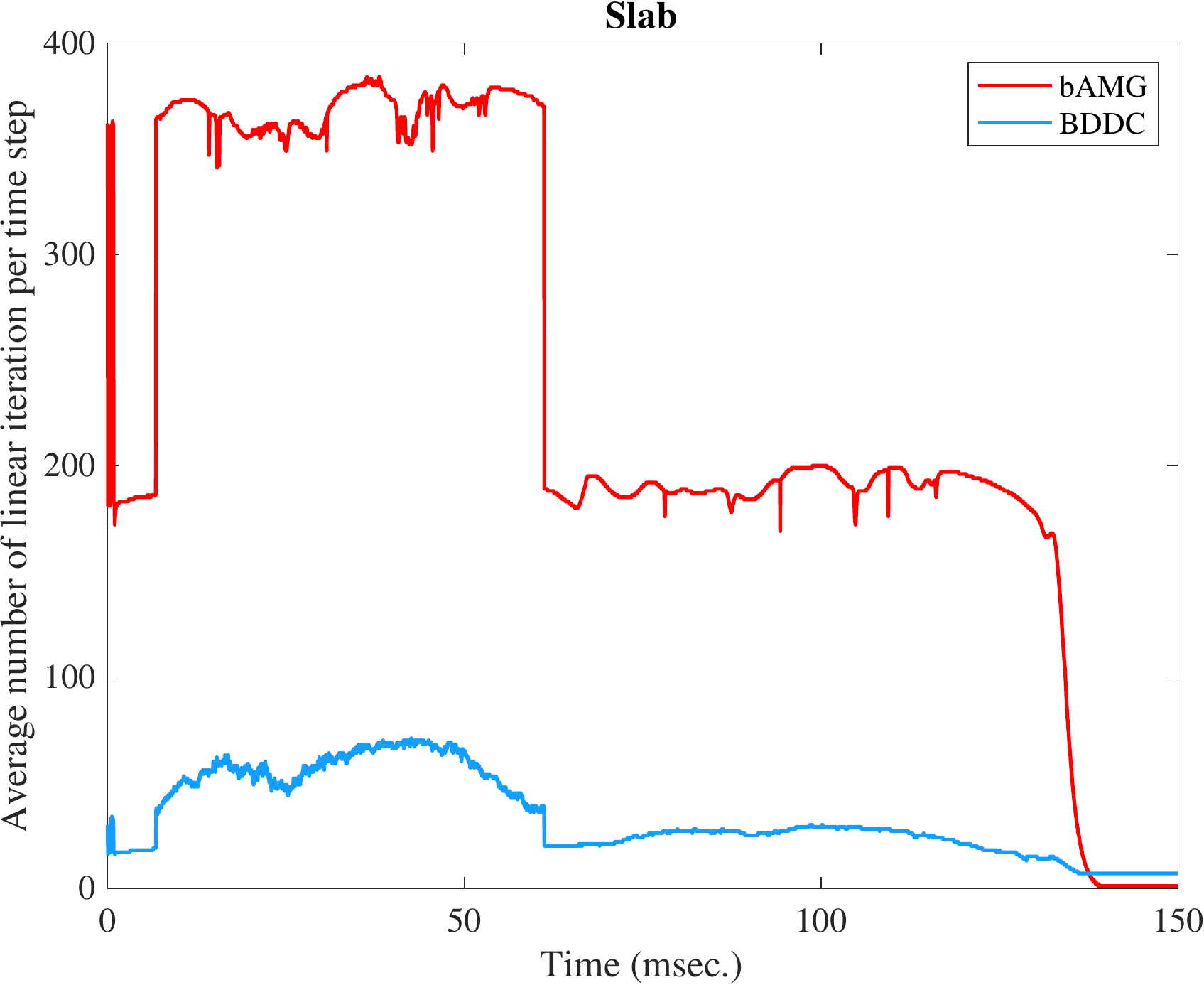}
	\includegraphics[scale=.4]{ 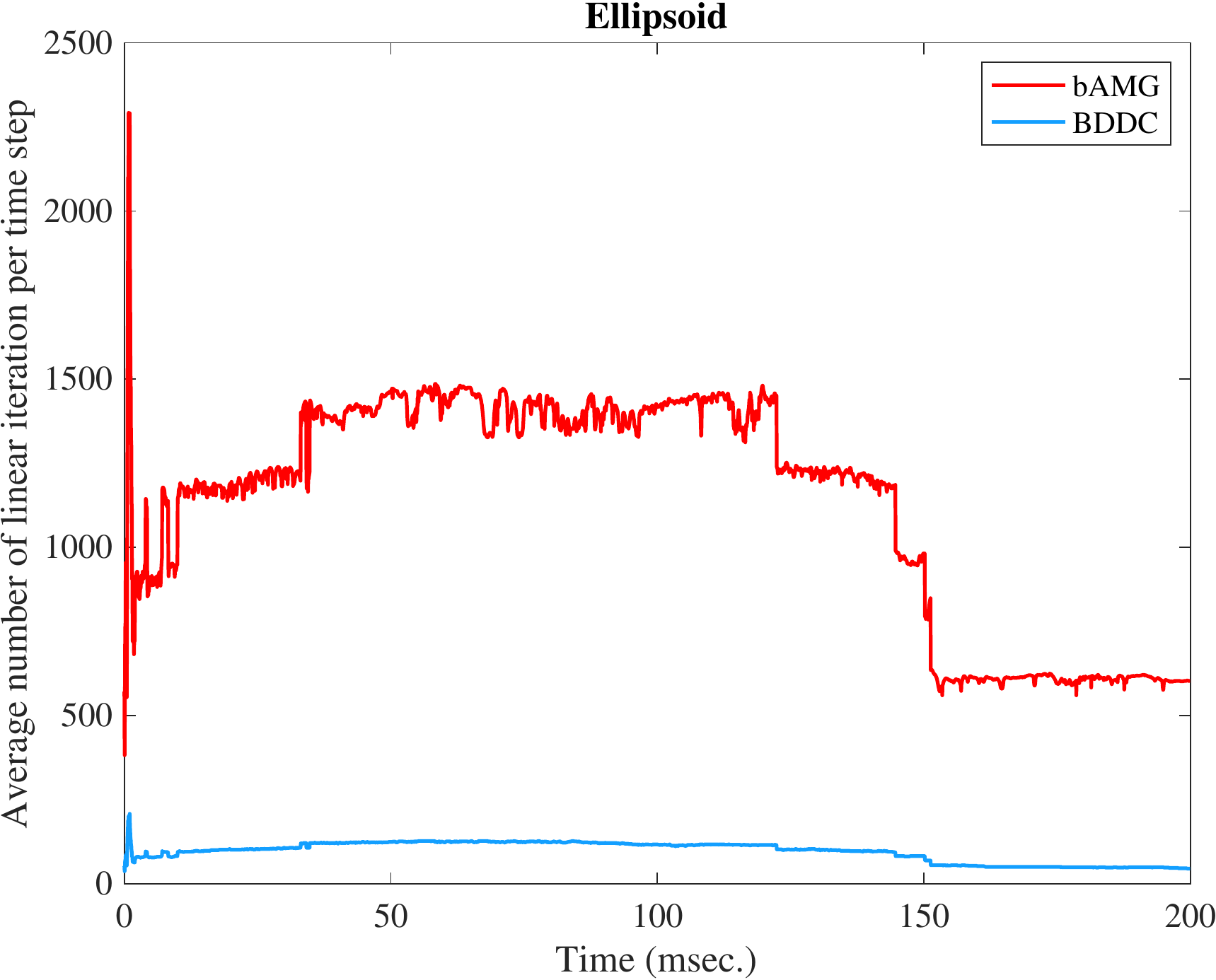}
	\caption{Whole heartbeat simulation on slab domain, time interval $[0,150]$ms, 3000 time steps (on the left) and on ellipsoidal domain, time interval $[0,200]$ms, 4000 time steps (on the right). Fixed number of subdomains $8 \cdot 8 \cdot 2$ and fixed global mesh $ 128 \cdot 96 \cdot 24$. Comparison between bAMG and BDDC average number of linear iterations per time step }
	\label{fig_bido_full_coupled}
\end{figure}

\section{Conclusion}
We have designed a dual-primal Newton-Krylov solver for the monolithic solution strategy for fully implicit time discretizations of the cardiac Bidomain model. 
Theoretical analysis for the convergence rate of the non-symmetric preconditioned operator has been provided. 
We have validated this result through extensive parallel numerical tests, showing the efficiency and robustness of the solver, thus encouraging further investigation using more complex ionic models and realistic heart geometries.

\section*{Acknowledgement}
The Author would like to thank Simone Scacchi and Luca Pavarino for many helpful discussions and comments.


\end{document}